\newtheorem{defi}{Definition}[section]
\newtheorem{Theorem}[defi]{Theorem}
\newtheorem{Proposition}[defi]{Proposition}
\newtheorem{Lemma}[defi]{Lemma}
\newtheorem{Corollary}[defi]{Corollary}
\newtheorem{Remark}[defi]{Remark}
\newcommand{\nn}{\nonumber}
\newcommand{\R}{{\mathbb R}}
\newcommand{\NN}{{\mathbb N}}
\newcommand{\DC}{C^\infty_c}
\newcommand{\vr}{\varrho}
\newcommand{\tvr}{\tilde \vr}
\newcommand{\vu}{\vc{u}}
\newcommand{\vc}[1]{{\bf #1}}
\newcommand{\Div}{{\rm div}_x}
\newcommand{\Grad}{\nabla_x}
\newcommand{\dx}{{\rm d} {x}}
\newcommand{\dt}{{\rm d} t }
\newcommand{\intO}[1]{\int_{\Omega} #1 \ \dx}
\newcommand{\intOe}[1]{\int_{\Omega_\ep} #1 \ \dx}
\newcommand{\intRt}[1]{\int_{R^3} #1 \ \dx}
\newcommand{\ep}{\varepsilon}
\def\e{\varepsilon}
\def\d{\partial}
\def\O{\Omega}
\def\th{\theta}
\def\de{\delta}
\def\a{\alpha}
\def\g{\gamma}
\def\b{\beta}
\providecommand{\ep}{\e}
\renewcommand\be{\begin{equation}}
\newcommand\ee{\end{equation}}
\renewcommand\ba{\begin{equation}\begin{aligned}}
\newcommand\ea{\end{aligned}\end{equation}}
\renewcommand{\ggg}{{\bf g}}
 \providecommand{\skp}[2]{\skptmp{}{#1}{#2}}
  \providecommand{\norm}[1]{\normtmp{}{#1}}
\definecolor{Cgrey}{rgb}{0.85,0.85,0.85}
\definecolor{Cblue}{rgb}{0.50,0.85,0.85}
\definecolor{Cred}{rgb}{1,0,0}
\definecolor{fancy}{rgb}{0.10,0.85,0.10}
\numberwithin{equation}{section}
\date{}
\begin{document}


\title{Homogenization of the compressible Navier-Stokes equations in domains with very tiny holes}

\author{Yong Lu\footnote{Chern Institute of Mathematics $\&$ LPMC, Nankai University, Tianjin 300071, China. Email: {\tt lvyong@amss.ac.cn}.} \and Sebastian Schwarzacher\footnote{Department of mathematical analysis, Faculty of Mathematics and Physics, Charles University, Sokolovsk\'a 83, 186 75 Praha, Czech Republic. Email {\tt schwarz@karlin.mff.cuni.cz}.}}

\maketitle

\begin{abstract}

We consider the homogenization problem of the compressible Navier-Stokes equations in a bounded three dimensional domain perforated with very tiny holes.  As the number of holes increases to infinity, we show that, if the size of the holes is small enough, the homogenized equations are the same as the compressible Navier-Stokes equations in the homogeneous domain---domain without holes. This coincides with the previous studies for the Stokes equations and the stationary Navier-Stokes equations. It is the first result of this kind in the instationary barotropic compressible setting. The main technical novelty is the study of the Bogovski{\u\i} operator in non-Lipschitz domains.

\end{abstract}

{\bf Keywords:} Homogenization, Navier--Stokes equations, Bogovski{\u\i} operator.


\section{Introduction}\label{i}

In practice, there comes up the study of fluid flows in domains distributed with a large number of \emph{holes} that represent solid obstacles. It is suitable, for instance to model poluted underground water or oil development. The fluid flows passes between the small obstacles or in the holes in between. Such domains are usually called perforated domains and a typical example is the so-called porous media. The perforation parameters, which are mainly the size of  holes and the mutual distance of the holes in the perforated domain under consideration play a determinant role in these problems. We refer to \cite{book-hom} for a number of real world applications.

Homogenization problems in fluid mechanics represent the study of the asymptotic behavior of fluid flows in perforated domains as the number of holes (obstacles) goes to infinity and the size of holes (obstacles) goes to zero simultaneously. The mathematical concern is the asymptotic behavior of the solutions to equations describing fluid flows with respect to perforation parameters. With an increasing number of holes within the domain of the fluid, the fluid flow approaches an effective state governed by certain "homogenized" equations which in the full domain, i.e. a flow without obstacles.

With different physical backgrounds, the mathematical equations governing the fluid flows vary a lot. There are typically Stokes equations, Navier--Stokes equations or Euler equations. There also include equations describing non--Newtonian fluid flows, such as $p$--Stokes equations, Oldroyd--B models, and many others. Accordingly, the mathematical study of homogenization problems in fluid mechanics includes the homogenization for various fluid models. We refer to \cite{Tartar1, ALL-NS1, ALL-NS2} for Stokes equations, \cite{Mik} for incompressible Navier--Stokes equations, \cite{Mas-Hom} for compressible Navier--Stokes equations and \cite{FNT-Hom} for the complete compressible Navier--Stokes--Fourier equations. We also refer to the book \cite{book-hom2} for other models, such as two phase models and non--Newtonian fluid models.

 In this paper, we study the \emph{homogenization} problem for the compressible Navier-Stokes equations in perforated domains. We consider a bounded three dimensional domain perforated with tiny holes where the diameters of the holes are taken to be of size $O(\e^{\alpha})$ with $\alpha\geq 1$ and with assumed minimal mutual distance between the holes of size $O(\e)$. In this work we will provide a size parameter $\alpha_0\geq 1$ to be precisely specified later on in Theorem \ref{Tm1}, such that for $\alpha\geq \alpha_0$ a homogenization sequence of solutions to the perforated domain converges to the compressible Navier Stokes systems without wholes. This means that if the size of the wholes is small enough than in the homogenization limit they can not be seen anymore. This is a known phenomenon and analogous results for incompressible fluids as well as for the stationary Navier Stokes equation have been proved and will be discussed below in more detail. However, we wish to emphasize that up to our knowledge this result is the first one of its kind for instationary compressible fluids.

\medskip

 Let us introduce the setting in more detail. Let $\Omega \subset \R^3$ be a $C^2$ bounded domain and $\{T_{\e,k}\}_{k\in K_\e}\subset \Omega$ be a family of closed sets (named \emph{holes} or \emph{solid obstacles}) satisfying
 \be\label{dis-holes}
T_{\e,k} = x_{\e,k} + \e^\a T_k \subset B(x_{\e,k}, \de_0 \e^\alpha )\subset B(x_{\e,k}, \de_1 \e^\alpha ) \subset B(x_{\e,k}, \delta_2 \e) \subset B(x_{\e,k}, \de_3\e)\subset \O,
\ee
where for each $k$,  $T_{k}\subset \R^3$ is a simply connected bounded domain of class $C^2$, where the $C^2$ constant is assumed to be uniformly bounded in $k$. With $B(x,r)$ we denote the open ball centered at $x$ with radius $r$ in $\R^3$. Here we assume that $\de_0, \ \de_1, \ \de_2,\ \de_3$ are positive fixed constants independent of $\e$, such that $\de_0<\de_1$ and $\de_2<\de_3$.  Moreover, we suppose that the balls (control volumes) $\{B(x_{\e,n}, \de_3\e)\}_{n\in \NN}$ are pairwise disjoint. The corresponding $\e$-dependent perforated domain is defined as
\be\label{domain}
\Omega_\e:=\Omega \setminus  \bigcup_{k\in K_\e} T_{\e,k}. 
\ee
By the distribution of holes assumed above, the number of holes in $\Omega_\e$ satisfy
\be\label{number-holes}
|K_{\e}|\leq C\frac{|\Omega|}{\e^3},\quad \mbox{for some $C$ independent of $\e$}.
\ee

\medskip

We consider the following Navier--Stokes equations in the space-time cylinder $(0,T)\times \Omega_\e$:
\be\label{i1}
\partial_t \vr + \Div (\vr \vu) = 0,
\ee
\be\label{i2}
\partial_t (\vr \vu) + \Div (\vr \vu \otimes \vu) +\Grad p(\vr) =
\Div {\mathbb S}(\Grad \vu)+\vr {\bf f},
\ee
\be\label{i3}
{\mathbb S} (\Grad \vu) = \mu \left( \Grad \vu + \Grad^t \vu - \frac{2}{3} \Div \vu {\mathbb I} \right) + \eta \Div \vu {\mathbb I},\ \mu > 0,\ \eta \geq 0.
\ee


Here, $\vr$ is the fluid mass density, $\vu $ is the velocity field, $p = p(\vr)$ denotes the pressure,  ${\mathbb S} = {\mathbb S} (\Grad \vu)$  stands for the Newtonian viscous stress tensor with $\mu$, $\lambda$ are the viscosity coefficients, $\vc{f}$ is the external force functions satisfying  $\|\vc{f}\|_{L^\infty((0,T)\times \O;\R^3)}<\infty.$ We remark that this condition on the external force $\ff$ is not optimal and can be improved. However, this improvement does not influence the proof in an essential manner, so we keep this restriction.

We impose the no-slip boundary condition
\be\label{i4}
u=0\quad \mbox{on}\ (0,T)\times \partial\Omega_\e
\ee
and impose the standard technical hypothesis imposed on the pressure in order to ensure the existence of global-in-time weak solutions
to the primitive Navier--Stokes system:
\be\label{pp1}
p \in C[0,\infty) \cap C^3(0, \infty), \ p(0) = 0, \ p'(\vr) > 0 \ \mbox{for}\ \vr > 0,\
\lim_{\vr \to \infty} \frac{p'(\vr)}{\vr^{\gamma - 1}} = p_\infty > 0
\ee
with $\gamma\geq 1$ for which the value range that we can handle is given precisely later on in Theorem \ref{Tm1}.

\medskip

In the sequel,  for a function $f$ defined in $\Omega_\e$, the notation $\tilde f$ or $E(f)$ stands for the zero-extension of $f$ in $\R^3$:
\be\label{extension}
\tilde f=f \quad \mbox{in} \ \Omega_\e,\qquad \tilde f=0 \quad \mbox{in} \ \R^3 \setminus \Omega_\e.\nn
\ee

\subsection{Known results}

We introduce some known results concerning the homogenization problems in the framework of fluid mechanics.

For the case $\a=1$, meaning that the size of holes is proportional to their mutual distance, Tartar \cite{Tartar1} recovered the Darcy law from the homogenization of Stokes equations.

In \cite{ALL-NS1,ALL-NS2}, Allaire gave a systematic study for the homogenization problems of Stokes and stationary incompressible Navier-Stokes equations and showed that the homogenization process crucially depends on the size of the holes. Specifically, in three dimensions, Allaire showed that when $\alpha<3$ corresponding to large holes, the behavior of the limit fluid is governed by the classical Darcy's law, as shown in \cite{Tartar1} for the case $\a=1$; when $\alpha>3$ corresponding to small holes, the equations do not change in the homogenization process and the limit homogenized system is the same system as the orginal Stokes or Navier-Stokes equations in \emph{homogeneous domain}---domain without holes; when $\alpha=3$ corresponding to the critical size of holes, in the limit it yields Brinkman's law which is a combination of Darcy's law and the original equations.

Later on, in the case $\alpha=1$,  the results have been extended to the incompressible instationary Navier-Stokes equations by  Mikeli\'{c} \cite{Mik},  to the compressible Navier--Stokes system by Masmoudi \cite{Mas-Hom}, and to the complete Navier--Stokes--Fourier system in \cite{FNT-Hom}. In all the aforementioned cases, the homogenization limit gives rise to Darcy's law.

Recently, the case $\alpha>3$ was considered in \cite{FL1, DFL} for the stationary compressible Navier-Stokes equation. If in addition $\gamma > 2$ satisfies $\frac{\a(\g-2)}{2\g-3} >1$, it was shown that the limit homogenized equations remain unchanged in homogenized domains. This coincides with the results obtained by Allaire \cite{ALL-NS1,ALL-NS2} for Stokes equations.

Up to our knowledge, the result in this paper is the first analytical result in the study of homogenization for the instationary compressible Navier-Stokes equations when the perforation parameter $\a\neq 1$.

\subsection{Difficulties}

In this section, we illustrate some difficulties in the study of homogenization problems for compressible Navier--Stokes equations. In particular, we point out the main new difficulties compared to the previous study \cite{FL1,DFL} concerning stationary compressible Navier--Stokes equations.

\subsubsection{A review of difficulties in the compressible fluid setting}

In our setting with very small holes, the result is formally similar as for incompressible equations obtained by Allaire \cite{ALL-NS1,ALL-NS2}, where the limit equations remain unchanged. However, the techniques for compressible system are rather different. The main difference is the presence of the pressure term which is a nonlinear function of the density. To obtain the same pressure form in the limit, one needs to show strong convergence of the density. While in the incompressible case, the equations are usually treated in divergence free spaces (testing by functions that are divergence free or applying the Helmholtz decomposition operator), and the pressure is reconstructed through the equation by using Ne\u{c}as's Theorem on negative Sobolev norms.


For compressible Navier-Stokes equation, a priori we only have $L^\infty(0,T;L^1(\Omega_\e))$ uniform estimate for the pressure $p(\vr)$, that is only $L^1$ bounds with respect to spatial variables. {However,} a uniform bounded family in $L^1$ is not weakly pre-compact in $L^1$. It was observed in the study of compressible Navier-Stokes equation in \cite{LI4,F-book,FNP} that one can improve the integrability by employing some kind of divergence inverse operator. For equations in bounded domains, the so-called Bogovski{\u\i} operator, which is the famous inverse operator of $\Div$ to trace free functions~\cite{Bog79,Bog80}, was applied in \cite{FNP} to obtain higher integrability of $p(\vr)$. However, the operator norm of the classical Bogovski{\u\i} operator depends on the Lipschitz character of the domain which is perforated in our case. We refer to \cite[Chapter 3]{Galdi-book} for a construction, as well as a norm estimate, of  the Bogovski{\u\i} operator in Lipschitz domains and star-shaped domains.

It can be found that the Lipschitz norm of the perforated domain $\Omega_\e$ is at least of order $1/\e$ which is unbounded as $\e \to 0$. Thus by employing the classical Bogovski{\u\i} operator one cannot get uniform higher integrability of $p(\vr)$.

 However, the established existence theory for the existence of finite energy weak solutions to the compressible Navier--Stokes equations is well developed for the case $\g>3/2$ and any fixed $\e$. To be able to pass to the limit with $\e\to 0$, we have to establish uniform pressure estimates despite the fact that we do not have domains with uniform Lipschitz boundary.


Since the higher integrability of the pressure is the key  step to prove convergence with $\e\to 0$, we introduce the respective stationary technique of \cite{FL1}~in more detail as an introduction to the problems that we have to handle in the instationary case. The method is  based on finding a convenient inverse of the $\Div$ operator for non Lipschitz domains.
In \cite{FL1} it was considered the case $\g\geq 3$ which guarantees the $L^2$ integrability of the pressure. Indeed, there holds
 \be\label{int-vr}
\vr\in L^{3(\g-1)}(\O_\e), \ \mbox{if} \ 3/2<\g\leq 3; \quad \vr\in L^{2\g}(\O_\e), \ \mbox{if} \ \g\geq 3.
\ee

The construction of an inverse of $\Div$ is done by employing an {\em Restriction operator} $R_\e:\, W_{0}^{1,2}(\Omega;\R^3) \to  W_{0}^{1,2}(\Omega_\e;\R^3)$, that was introduced by Allaire \cite{ALL-NS1} ({see also Section 2.2}). For any $f\in L^2_0(\Omega_\e)$ which is the set of $L^2(\Omega_\e)$ functions with zero mean value, the authors first considered its zero-extension $E(f):=\tilde f$ in $\Omega$ and then employ the Bogovski{\u\i} operator $\mathcal{B}:L^2_0(\Omega)\to W^{1,2}_0(\Omega)$\footnote{{see \cite[Appendix]{F-N-book} or Section 2 below for more details and the definition of the Bogovski{\u\i} operator.}} on the unperforated domain $\Omega$ and find that the operator defined as
\be\label{def-B-in}
\mathcal{B}_\e:=R_\e \circ \mathcal{B} \circ E\ : \ L^2_0(\Omega_\e) \to W_{0}^{1,2}(\Omega_\e;\R^3)
\ee
satisfies
\be\label{def-B-in1}
\mathcal{B}_\e(f)\in W_0^{1,2}(\Omega_\e),\quad \Div {B}_\e(f)=f \quad \mbox{in}\quad \Omega_\e.
\ee
Observe that the operator norm of $\mathcal{B}_\e$ relies on $\e$ only through the operator norm of $R_\e$ from $W_{0}^{1,2}(\Omega;\R^3)$ to $W_{0}^{1,2}(\Omega_\e;\R^3)$.  By Allaire's construction, the operator norm of $R_\e$ (partially) depends on the uniform estimates of a Stokes system in a ball with a shrinking hole.  Let $B_1=B(0,1)$ be the unit ball and $T\subset B_1$ be a model obstacle in the fluid which is assumed to be a simply connected $C^2$ domain. In accordance to the construction in \cite{ALL-NS1}, the operator norm of $R_\e$ depends on $\e$  partially through the $W_0^{1,2}$ estimate
 \begin{equation}\label{est-stokes0}
\|\nabla v\|_{L^2(B_1\setminus \e^{\alpha-1}T)}+\|q\|_{L^2_0(B_1\setminus \e^{\alpha-1}T)}\leq C_\e\, \|g\|_{L^2(B_1\setminus \e^{\alpha-1}T)}
\end{equation}
of the following Dirichlet problem of the Stokes equations
\begin{equation}\label{stokes}
-\Delta v+\nabla q=\Div g,\ \Div v =0 \quad \mbox{in}~=B_1\setminus \e^{\alpha-1}  T; \quad v=0 \quad \mbox{on} ~\d B_1\cup \e^{\alpha-1} \d T.
\end{equation}
Due to the $L^2$ framework, which represents the natural existence frame for the Stokes operator the constant $C_\e$ in \eqref{est-stokes0} is $1$ and hence independent of $\e$.   Furthermore, under the condition $\a\geq3$, it can be shown that the operator norm of $\mathcal{B}_\e$ defined in \eqref{def-B-in} is independent of $\e$. This implies uniform $L^2$ estimates of the pressure.
\smallskip


A natural thinking is to employ this construction of Bogovski{\u\i} operator and generalize it to the $L_0^r(\Omega_\e)$ and $ W_0^{1,r}(\Omega_\e;\R^3)$ framework for more general $r$.
It means a generalization of the restriction operator $R_\e$ to an operator from $W_0^{1,r}(\Omega;\R^3)\to W_0^{1,r}(\Omega_\e; \R^3)$. It was shown in \cite{L-hom1}) that this generalization can be done in a rather direct way by observing the well-posedness in $W_0^{1,r}(\Omega_\e; \R^3), 1<r<\infty$ of \eqref{stokes}.  The issue is that the operator norm of $R_\e:\ W_0^{1,r}(\Omega; \R^3)\to W_0^{1,r}(\Omega_\e; \R^3)$ depends on the $W_0^{1,r}$ estimate:
 \begin{equation}\label{est-stokes1}
\|\nabla v\|_{L^r(B_1\setminus \e^{\alpha-1}T)}+\|q\|_{L^r_0(B_1\setminus \e^{\alpha-1}T)}\leq C_\e\, \|g\|_{L^r(B_1\setminus \e^{\alpha-1}T)},
\end{equation}
for $v$ a solution to \eqref{stokes}.
However, for $r\neq 2$ the constant $C_\e$ depends on the Lipschitz character of the domain $B_1\setminus \e^{\alpha-1}T^s$ and is not uniform in $\e$ when $\a>1$. For the well-posedness result and estimates, we refer to \cite{BS} for details and the proof. It was the main motivation of the study in \cite{L-hom2} ({see also \cite{L-hom1} for the respective result for the Laplace operator}) to shown that the estimate constant $C_\e$ in \eqref{est-stokes1} is uniformly bounded in $\e$ for the range $3/2<r<3$. This allowed to construct a Bogovski{\u\i} operator that is uniformly bounded from $L_0^r(\Omega_\e)\to W_0^{1,r}(\Omega_\e; \R^3)$ for any $r\in (3/2,3)$ by using the construction \cite{FL1}.

Later a new line of apporach was developed that implied existence for solutions to the stationary compressible Navier--Stokes equations for the case $\g<3$   where the $L^2$ integrability of the pressure is no longer guaranteed (see \eqref{int-vr}). Observe that the restriction $r \in (3/2,3)$ corresponds to $\left(3-\frac{3}{\g}\right)'\in (3/2,3)$. This indicates that the existence frame for the stationary compressible setting can be extended to $\g>2$. Indeed, exactly for $\g>2$ it was shown in \cite{DFL} that a refinement to a more explicit construction of Bogovski{\u\i} type operator can be achieved. This builds the fundament of the Bogovski{\u\i} operator constructed in this article. For that reason the growth restriction on the pressure in this work are analogous to the restriction $\gamma>2$ in the stationary case. See Remark~\ref{rem-tech} for more details.

\subsubsection{New difficulties {and a technical theorem}}

In this paper, we turn to consider instationary compressible Navier--Stokes equations.  We will see later on that our main result obtained in this paper is formally the same as for the stationary Navier--Stokes equations; the techniques are rather different and there arise new difficulties.

The first difficulty comes from even lower integrability of the pressure. For instationary compressible Navier-Stokes equations with any $\gamma>3/2$,  the so far best integrability of the pressure one can obtain is (see \cite{LI4,F-book})
$
p(\vr)\in L^{\frac{5}{3}-\frac{1}{\gamma}},
$
where the range of the integrability component is $\frac{5}{3}-\frac{1}{\gamma}\in (1,\frac{5}{3})$. This is much worse than in the stationary setting and the deep reason why we have to impose the severe restriction $\g>6$ in Theorem \ref{Tm1}. It is to make sure that $\frac{5}{3}-\frac{1}{\gamma}>3/2$. More explanations on the restriction on $\g$ {and their connection to the "hole--size" $\alpha$} are given in Remark~\ref{rem-tech}.

Technically, the main new difficulty is the absence of uniform estimates for the restricted Bogovski{\u\i} operator $\mathcal{B}_\e$ in a negative Sobolev spaces. Indeed, due to the instationary setting, to obtain higher integrability of the pressure by using the Bogovski{\u\i} operator, one needs to be able to handle terms of the form
$
{\mathcal B}_\e(\Div(\vr^\th \vu))
$
which comes from the term ${\mathcal B}_\e(\d_t(\vr^\th))$ via the renormalized continuity equation. Here $\th$ is an exponent in the range  $(0,\frac{2\g}{3}-1]$. This enforces us to find uniform estimates of the following type
\be\label{est-B-in2}
\|{\mathcal B}_\e(\Div g)\|_{L^r(\Omega_\e; \R^3)} \leq C \|g\|_{L^r(\Omega_\e; \R^3)},\quad \mbox{for any $g\in L^r(\Omega_\e; \R^3),\ g\cdot {\bf n}=0$ on $\d\O_\e$},
\ee
with the constant $C$ independent of $\e$.

If we would like to employ the construction of $\mathcal{B}_\e$  in \cite{L-hom2} and obtain such an estimate \eqref{est-B-in2}, one would need to obtain uniform estimates for very weak solutions to \eqref{stokes}:
 \begin{equation}\label{est-stokes2}
\|v\|_{L^r(B_1\setminus \e^{\alpha-1}T)}+\|q\|_{W^{-1,r}(B_1\setminus \e^{\alpha-1}T)}\leq C \|g\|_{W^{-1,r}(B_1\setminus \e^{\alpha-1}T)},
\end{equation}
{with constant independent of $\e$. Above $T$ represents once more the shape of a model obstacle in the fluid.}
This is unknown according to the authors' knowledge and we think that the estimate is not valid for any $r>3/2$.

Our approach is different to the approach in \cite{FL1} and more general, following the idea of \cite{DFL}. It was possible due to a further and new refinement to the plethora of results related to the Bogovkii operator. Namely uniform estimates in negative spaces for so called John domains. John domains are a typical known generalization of Lipschitz domains on which bounds of the Bogovski{\u\i} operator in Lebesgue spaces could be shown. We refer to \cite{ADM, DieRuzSch10} for the definition of John domains and the construction of such Bogovski{\u\i} operators. We are able to prove that the very same Bogovski{\u\i} operator constructed in~\cite{DieRuzSch10} actually is bounded in negative Sobolev spaces as well. Since we believe this result is of independent interest we state the theorem here as a technical tool for further use:

\begin{Theorem}\label{thm-Bog}
Let $\Omega$ be a John-domain, or more general a domain that satisfies the emanating chain condition defined in~\cite[Def. 3.5]{DieRuzSch10}.  Let $C^\infty_{c,0}(\Omega)$ be the space of smooth and compactly supported functions with zero mean values in $\O$. Then for any $1<q<\infty$ there exists a linear operator $\mathcal{B} :  C^\infty_{c,0}(\Omega)\to C^\infty_{c}(\Omega;\R^3)$ such that:

If $f\in L^q(\Omega)$ with $\int_{\Omega} f\ \dx =0$, there holds
\be\label{thm-div1}
\Div (\mathcal{B}(f))=f\  \mbox{in} \ \Omega, \quad \|\mathcal{B}(f)\|_{W_{0}^{1,q}(\Omega;\R^3)}\leq C\|f\|_{L^q(\Omega)}
\ee
for some constant $C$ dependent only on the emanating chain constants.

{ If $f\in (W^{1,r'}(\Omega))\cap L^{r'}_0(\Omega))^*=\{g\in (W^{1,r'}(\Omega))^*\,:\, \langle g,1 \rangle=0\}$, then
\be\label{thm-div2}
\langle \mathcal{B}(f),\nabla \phi\rangle=\langle f, \phi \rangle \  \mbox{for} \ \phi\in W^{1,r'}(\Omega), \quad
 \|\mathcal{B}(f)\|_{L^{r}(\Omega;\R^3)}\leq C \|f\|_{(W^{1,r'}(\Omega))^*}.
\ee
}
\end{Theorem}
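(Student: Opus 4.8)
The plan is to leverage the explicit construction of the Bogovski{\u\i} operator on emanating-chain (in particular John) domains given in \cite{DieRuzSch10}, and to observe that the \emph{same} operator, which is known to be bounded $L^q_0(\Omega)\to W^{1,q}_0(\Omega;\R^3)$, is also bounded $(W^{1,r'}(\Omega))^*\cap\{\langle\cdot,1\rangle=0\}\to L^r(\Omega;\R^3)$. First I would recall the structure of that construction: one fixes a chain decomposition of $\Omega$ into Whitney-type cubes (or balls) $\{Q_j\}$, a partition of unity subordinate to it, and the operator is written as a locally finite sum $\mathcal{B}(f)=\sum_j \mathcal{B}_{Q_j}(g_j)$ where each $g_j$ is built from $f$ by a transport/emanating-chain procedure and $\mathcal{B}_{Q_j}$ is the classical star-shaped Bogovski{\u\i} operator on $Q_j$; crucially $\mathcal{B}_{Q_j}$ is represented by a kernel $K_j(x,y)$ of the standard Bogovski{\u\i} form. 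The point is that this $\mathcal{B}$ is a \emph{fixed} linear map on $C^\infty_{c,0}(\Omega)$, already defined; the two estimates \eqref{thm-div1} and \eqref{thm-div2} are just two different continuity statements about one and the same operator.

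The core of the argument is the duality identity. For $f\in C^\infty_{c,0}(\Omega)$ and $\phi\in C^\infty(\Ov\Omega)$ one has, by the solution property $\Div\mathcal{B}(f)=f$ and integration by parts (all functions here are smooth and $\mathcal{B}(f)$ is compactly supported),
\be
\langle f,\phi\rangle=\intO{f\,\phi}=-\intO{\mathcal{B}(f)\cdot\nabla\phi}=-\langle\mathcal{B}(f),\nabla\phi\rangle,\nn
\ee
so (up to the harmless sign, which I would absorb into the definition of $\mathcal{B}$ as in the statement) $\langle\mathcal{B}(f),\nabla\phi\rangle=\langle f,\phi\rangle$. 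Hence for fixed $f$ the vector field $\mathcal{B}(f)\in L^r$ is characterized by $\sup\{\langle\mathcal{B}(f),\Psi\rangle:\ \Psi\in C^\infty_c(\Omega;\R^3),\ \|\Psi\|_{L^{r'}}\le1\}$, and I would estimate this pairing against an \emph{arbitrary} test field $\Psi$, not only gradients. The standard trick: write $\Psi=\nabla\phi+\Psi_{\mathrm{sol}}$ is not available since $\mathcal{B}(f)$ is not divergence-free, so instead I use the \emph{adjoint} of $\mathcal{B}$. Concretely, $\langle\mathcal{B}(f),\Psi\rangle=\langle f,\mathcal{B}^*\Psi\rangle$ where $\mathcal{B}^*$ is the formal transpose of the kernel operator above; from the classical local Calder\'on--Zygmund theory for the Bogovski{\u\i} kernel (the kernel of $\mathcal{B}^*_{Q_j}$ is again of weakly singular / CZ type) together with the emanating-chain summation exactly as carried out in \cite{DieRuzSch10} for the direct estimate, one gets $\|\mathcal{B}^*\Psi\|_{W^{1,r'}(\Omega)}\le C\|\Psi\|_{L^{r'}(\Omega;\R^3)}$ with $C$ depending only on the chain constants. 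Therefore $|\langle\mathcal{B}(f),\Psi\rangle|=|\langle f,\mathcal{B}^*\Psi\rangle|\le\|f\|_{(W^{1,r'}(\Omega))^*}\|\mathcal{B}^*\Psi\|_{W^{1,r'}}\le C\|f\|_{(W^{1,r'})^*}\|\Psi\|_{L^{r'}}$, and taking the supremum over $\Psi$ gives \eqref{thm-div2} for smooth $f$; a density argument in $(W^{1,r'}(\Omega))^*$ extends it, and \eqref{thm-div1} is quoted verbatim from \cite{DieRuzSch10}.

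The main obstacle, and where I would spend the real work, is verifying that the transpose estimate $\|\mathcal{B}^*\Psi\|_{W^{1,r'}(\Omega)}\lesssim\|\Psi\|_{L^{r'}(\Omega)}$ holds with constant controlled purely by the emanating-chain constants. This is essentially the ``dual'' of the main theorem of \cite{DieRuzSch10}: one needs that $\nabla\mathcal{B}^*$ is bounded on $L^{r'}$, which on each Whitney piece follows from the classical fact that $\nabla\mathcal{B}_{Q_j}$ (hence $\nabla\mathcal{B}^*_{Q_j}$) is a Calder\'on--Zygmund operator with constants depending only on the chunkiness of $Q_j$, and then the emanating-chain / pressure-decomposition bookkeeping of \cite{DieRuzSch10} must be run ``with the arrows reversed''. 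I expect this to go through with only cosmetic changes because that bookkeeping is symmetric in the two indices $q$ and $r'$, but it requires care with the weights attached to the chain and with the fact that $\mathcal{B}^*\Psi$ need not have zero mean, so one must check that the extra constant / lower-order term is still controlled by $\|\Psi\|_{L^{r'}}$ on the bounded domain $\Omega$. The boundedness of $\Omega$ (a John domain is bounded) is what makes the $W^{1,r'}$ norm of $\mathcal{B}^*\Psi$, including its $L^{r'}$ part, controllable, closing the estimate.
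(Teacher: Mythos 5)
Your overall strategy (duality against arbitrary $L^{r'}$ test fields via the adjoint $\mathcal{B}^*$) is legitimate in principle, but the proposal leaves its central step unproven, and that step is not the routine ``arrow reversal'' you hope for. The bound you need, $\|\mathcal{B}^*\Psi\|_{W^{1,r'}(\Omega)}\le C\|\Psi\|_{L^{r'}(\Omega;\R^3)}$, is a \emph{gain--of--one--derivative} estimate for the adjoint; it is not the formal dual of the known bound $\mathcal{B}:L^q_0\to W^{1,q}_0$ (whose dual only gives $\mathcal{B}^*:W^{-1,q'}\to L^{q'}/\R$). Dualizing your desired adjoint bound back, it is exactly equivalent to $\|\mathcal{B}(\Div g)\|_{L^r}\le C\|g\|_{L^r}$ --- i.e.\ to the statement \eqref{thm-div2} itself. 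So as written, the ``real work'' you defer is the whole theorem, and doing it at the level of the transposed chain operators $\sum_i T_i^* B_i^*$ (where $\nabla$ does not commute with the $T_i^*$, and the $T_i^*$ transport mass along the chains in the reverse direction) is substantially more than cosmetic.

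The paper avoids the adjoint entirely by, in effect, pre-dualizing through the choice of representative of $f$: by \cite[Lemma 10.4]{F-N-book} every $f\in (W^{1,r'}(\Omega))^*$ with $\langle f,1\rangle=0$ is of the form $\langle f,\phi\rangle=\int_\Omega w\cdot\nabla\phi\,\dx$ with $w\in L^r(\Omega;\R^3)$ and $\|w\|_{L^r}=\|f\|_{(W^{1,r'})^*}$. Approximating $w$ by $w_\delta\in C^\infty_c$, one only has to bound $\mathcal{B}(\Div w_\delta)=\sum_i B_iT_i\Div w_\delta$ in $L^r$. The decomposition theorem \cite[Thm.~4.2]{DieRuzSch10} localizes this to the cubes $W_i$, where the required estimate $\|B_i(\Div g)\|_{L^r}\le C\|g\|_{L^r}$ is the classical negative-norm bound for the standard Bogovski{\u\i} operator on star-shaped domains (\cite[Thm.~10.11]{F-N-book}); summing with the finite intersection property and letting $\delta\to0$ finishes the proof. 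I recommend you restructure your argument along these lines: replace the supremum over arbitrary $\Psi$ by the representation $f\leftrightarrow w$, which reduces the global negative-space estimate to a citable local one and removes the need for any kernel or adjoint analysis. (Your sign worry in the identity $\langle\mathcal{B}(f),\nabla\phi\rangle=\langle f,\phi\rangle$ disappears with this convention, since $\langle f,\phi\rangle:=\int w\cdot\nabla\phi$ already absorbs the integration by parts.)
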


Observe, that \eqref{thm-div1} was shown in~\cite{DieRuzSch10}. The second bound \eqref{thm-div1} is shown in section 2 below.
\begin{Remark}
Let $\Omega$ be a John-domain, then we find for $f\in L^q(\Omega;\R^3)$ with
 $f\cdot {\bf n}=0$ on $\partial\Omega$ in the weak sense, that $\Div (\mathcal{B}(\Div f))= \Div f$. More explicit
\begin{align}
\label{thm-div3}
\begin{aligned}
\langle \mathcal{B}(\Div f), \nabla \phi\rangle =\skp{f}{\nabla \phi}\  \mbox{for} \ \phi\in W^{1,q'}(\Omega), \quad \| \mathcal{B}(\Div f)\|_{L^{q}(\Omega;\R^3)}\leq C \|f\|_{L^q(\Omega;\R^3)}.
\end{aligned}
\end{align}
This is due to the fact that $f\in L^q(\Omega;\R^3)$ and $f\cdot {\bf n}=0$ in the weak sense implies $\Div f\in \set{g\in (W^{1,r'}(\Omega))^*\,:\,\langle g,1 \rangle=0}$.

\end{Remark}
Below we will use the theorem to construct a restriction operator which is able to control the weak time derivative of the density.  As a consequence, a higher integrability of the density can be deduced.
%

\subsection{Weak solutions}

\begin{defi}[Finite energy weak solution]\label{def-weaksol}

We say that $(\vr, \vu)$ is a \emph{finite energy weak solution} of the Navier-Stokes equations \eqref{i1}-\eqref{i3} supplemented with the boundary condition \eqref{i4}, the assumption on pressure \eqref{pp1}, and the following initial conditions
\be\label{ic}
\vr(0, \cdot) = \vr_0 \in L^\g(\O_\e), \  \vu(0, \cdot) = \vu_0 \in L^{\frac{2\g}{\g-1}}(\O_\e;\R^3),
\ee
in the space-time cylinder $(0,T) \times \Omega_\e$ if:
\begin{itemize}

\item There holds:
\[\vr \geq 0 \ a.e.\  in \  (0,T) \times \Omega_\e,\quad \vr \in C_{\rm weak} (0,T;  L^\gamma(\Omega_\e)),\]
\[
 \vr \vu \in C_{\rm weak}(0,T; L^{\frac{2 \gamma}{ \gamma + 1}}(\Omega_\e;\R^3)),
\ \vu \in L^2(0,T; W_0^{1,2}(\Omega_\e;\R^3)).
\]

\item For any $0 \leq \tau \leq T$ and any test function $\varphi \in \DC([0,T] \times \Omega_\e)$:

\be\label{Mm11}
\int_0^\tau \intOe{ \left[ \vr \partial_t \varphi + \vr \vu \cdot \Grad \varphi \right] } \ \dt =
\intOe{ \vr(\tau, \cdot) \varphi (\tau, \cdot) } - \intOe{ \vr_{0} \varphi (0, \cdot) }.
\ee

\item For any $0 \leq \tau \leq T$ and any test function $\varphi \in \DC([0,T] \times {\Omega_\e};\R^3)$:
\ba\label{Mm2}
\int_0^\tau \intOe{ \left[ \vr \vu \cdot \partial_t \varphi + \vr \vu \otimes \vu : \Grad \varphi  + p(\vr) \Div \varphi \right] } \ \dt
\ea
\[
= \int_0^\tau \intOe{ {\mathbb S}(\Grad \vu) : \Grad \varphi -\vr \vc{f} \varphi } \ \dt + \intOe{ \vr \vu (\tau, \cdot) \cdot \varphi (\tau, \cdot) } - \intOe{ \vr_{0} \vu_{0} \cdot \varphi(0, \cdot) }.
\]

\item The \emph{energy inequality}
\be\label{Mm4}
\int_{\Omega_\e} \big[
\frac{1}{2} \vr |\vu|^2 +  P(\vr)  \big] (\tau, \cdot) \ \dx
+ \int_0^\tau \intOe{ {\mathbb S}(\Grad \vu) : \Grad \vu } \ \dt
\leq \int_{\Omega_\e} \big[
\frac{1}{2} \vr_{0} |\vu_{0} |^2 +  P(\vr_0) \big]  \dx
\ee
holds for a.a. $\tau \in (0,T)$, where we have set $P(\vr) := \vr \int_1^\vr \frac{ p(z) }{z^2} \ {\rm d}z.$

\item Moreover, a finite energy weak solution $(\vr, \vu)$ is said to be a \emph{renormalized weak solution} if
\be\label{ren}
\d_t b(\vr)+\Div\big(b(\vr)\vu\big)+\big(b'(\vr)\vr-b(\vr)\big)\Div \vu=0\ \mbox{in}\ \mathcal{D}'\big((0,T)\times \R^3\big),
\ee
 for any $b\in C^1([0,\infty))$. In \eqref{ren} $(\vr, \vu)$ are extended to be zero outside $\O_\e$.



\end{itemize}

\end{defi}

We give two remarks concerning the definition and the existence of finite energy weak solutions.
\begin{Remark}\label{rem-ini}
The integrability for the initial data in \eqref{ic} is imposed such that the initial energy on the right-hand side of \eqref{Mm4} is bounded.
\end{Remark}
\begin{Remark}\label{rem-exi}
For any fiexed $\e>0$, it can be shown by the theory developed by Lions \cite{LI4} and Feireisl-Novontn\'y-Petzeltov{\' a} \cite{F-book} that the Navier-Stokes equations \eqref{i1}-\eqref{i3}  admits global-in-time finite energy weak solutions $(\vr,\vu)$ for any finite energy initial data and pressure satisfying \eqref{pp1} with $\gamma>3/2$. Moreover, such solutions are also renormalized in the sense of \eqref{ren}.
\end{Remark}

\subsection{Main results}

Our main result is the following:

\begin{Theorem}\label{Tm1}

Let $(\vr_\e,\vu_\e)_{0<\e<1}$ be a family of finite energy weak solutions for the no-slip compressible Navier-Stokes equations \eqref{i1}-\eqref{i4} in $(0,T)\times \Omega_\e$ under the pressure condition \eqref{pp1} with $\gamma>6$ in the sense of Definition \ref{def-weaksol} with initial data satisfying
\be\label{ini-vr-vu}
 \vr_\e(0,\cdot)=\vr_{0,\e},\  \vu(0,\cdot)=\vu_{0,\e}, \quad  \sup_{0<\e<1} \left( \|\vr_{0,\e}\|_{L^\gamma(\Omega_\e)}+\|\vu_{0,\e}\|_{L^{3}(\Omega_\e;\R^3)}\right)= D < \infty.
\ee
This implies, up to a substraction of a subsequence, the weak convergence for the zero extensions as $\e \to 0$:
\be\label{con-vr-vu}
\tilde \vr_{0,\e} \to  \vr_0 \ \mbox{weakly in}\ {L^\gamma(\Omega)}, \quad \tilde \vu_{0,\e} \to  \vu_0 \ \mbox{strongly in}\ {L^3(\Omega;\R^3)}.\nn
\ee

Then there holds the uniform estimates for the solution family:
\be\label{uniform-est}
\sup_{0<\e<1}\big( \|\vr_{\e}\|_{L^\infty(0,T;L^\gamma(\Omega_\e))}+ \|\vr_{\e}\|_{L^{\frac{5\gamma}{3}-1}((0,T)\times \Omega_\e)}+\|\vu_{\e}\|_{L^2(0,T;W_0^{1,2}(\Omega_\e;\R^3))}\big)\leq  C(D)<\infty,
\ee
where $C(D)$ depends only on $D$. This implies, up to a substraction of a subsequence, that
\be\label{weak-limit}
\tilde \vr_\e  \to \vr \ \mbox{weakly-* in} \ L^\infty(0,T;L^\gamma(\Omega)),\quad \tilde \vu_\e \to \vu \   \mbox{weakly in}\  L^2(0,T; W_{0}^{1,2}(\Omega);\R^3).
\ee

If the growth parameter $\gamma$ in \eqref{pp1} and size parameter $\alpha$ of the holes in \eqref{dis-holes} satisfy
\be\label{ass-tec}
\frac{\gamma-6}{2\gamma-3} \cdot \alpha>3,
\ee
then the couple $(\vr,\vu)$ is the finite energy weak solution to \eqref{i1}-\eqref{i4} in $\Omega$  with initial data
\be\label{ini-limit}
\vr(0,\cdot)=\vr_0,\quad \vu(0,\cdot)=\vu_0.\nn
\ee
\end{Theorem}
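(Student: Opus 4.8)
The plan is to follow the standard three‐step homogenization strategy for compressible Navier–Stokes: (i) derive the uniform bounds \eqref{uniform-est}; (ii) extract weak limits and pass to the limit in the continuity and momentum equations on the extended fields, identifying every term except the pressure; (iii) prove strong convergence of the density so that $p(\widetilde{\vr_\e}) \rightharpoonup p(\vr)$. The hole structure \eqref{dis-holes} enters only through the test functions we are allowed to use and through the Bogovskiĭ–type operator; the restriction \eqref{ass-tec} is exactly what makes the correctors and the Bogovskiĭ estimates uniform.

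First I would establish \eqref{uniform-est}. The $L^\infty_t L^\gamma_x$ bound on $\vr_\e$ and the $L^2_t W^{1,2}_x$ bound on $\vu_\e$ come directly from the energy inequality \eqref{Mm4} together with \eqref{ini-vr-vu} and the uniform bound on $\ff$; note the Poincaré constant of $\Omega_\e$ is controlled by that of $\Omega$ since $\Omega_\e \subset \Omega$ and functions extend by zero. The improved bound $\vr_\e \in L^{\frac{5\gamma}{3}-1}_{t,x}$ is the heart of the a priori estimates: one tests the momentum equation \eqref{Mm2} with $\varphi = \mathcal{B}_\e\big(\vr_\e^\theta - \langle \vr_\e^\theta\rangle_{\Omega_\e}\big)$ for $\theta = \frac{2\gamma}{3}-1$, where $\mathcal{B}_\e$ is the restriction of the John‐domain Bogovskiĭ operator of Theorem \ref{thm-Bog} composed with zero extension and Allaire's restriction operator $R_\e$. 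The term $\mathcal{B}_\e(\partial_t \vr_\e^\theta)$ is rewritten via the renormalized continuity equation \eqref{ren} as $\mathcal{B}_\e\big(\mathrm{div}(\vr_\e^\theta \vu_\e)\big) + \mathcal{B}_\e\big((\text{lower order})\,\mathrm{div}\,\vu_\e\big)$, and here the negative‐norm bound \eqref{thm-div3} — the new ingredient — gives $\|\mathcal{B}_\e(\mathrm{div}(\vr_\e^\theta\vu_\e))\|_{L^r} \le C\|\vr_\e^\theta \vu_\e\|_{L^r}$ with $C$ independent of $\e$, provided $r\in(3/2,\ldots)$, which is where $\gamma>6$ (so that $\frac{5}{3}-\frac{1}{\gamma}>\frac{3}{2}$) and \eqref{ass-tec} are used to keep the emanating‐chain constants of $\Omega_\e$ under control. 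This yields $\int_0^T\int_{\Omega_\e} p(\vr_\e)\vr_\e^\theta \le C(D)$, hence \eqref{uniform-est}.

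Next, the weak limits \eqref{weak-limit} follow from \eqref{uniform-est} and reflexivity; using the equation one also gets $\widetilde{\vr_\e} \in C_{\rm weak}([0,T];L^\gamma(\Omega))$ and $\widetilde{\vr_\e\vu_\e}$ time‐equicontinuous in a negative Sobolev space, so by Aubin–Lions $\widetilde{\vr_\e} \to \vr$ in $C_{\rm weak}([0,T];L^\gamma)$ and, combined with the $W^{1,2}$ bound on $\vu_\e$, $\widetilde{\vr_\e\vu_\e} \to \vr\vu$ strongly in, say, $L^2(0,T;W^{-1,2}(\Omega))$ and $\widetilde{\vr_\e\vu_\e\otimes\vu_\e}\rightharpoonup \vr\vu\otimes\vu$. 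To pass to the limit in \eqref{Mm11}–\eqref{Mm2} with a fixed test function $\varphi\in\DC([0,T]\times\Omega)$ — which is \emph{not} admissible on $\Omega_\e$ — I would use Allaire's restriction operator $R_\e\varphi$, which satisfies $R_\e\varphi \to \varphi$ in $W^{1,2}$ and preserves divergence‐free‐ness up to controlled error; the corrector estimates are uniform precisely because \eqref{ass-tec} forces the holes to be subcritical. This identifies $(\vr,\vu)$ as a weak solution of \eqref{i1}–\eqref{i2} on $\Omega$ with the pressure replaced by the weak limit $\overline{p(\vr)}$, and the energy inequality on $\Omega$ follows by weak lower semicontinuity after passing to the limit in \eqref{Mm4}.

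The main obstacle is the last step: upgrading to $\overline{p(\vr)} = p(\vr)$, i.e. strong convergence of $\widetilde{\vr_\e}$. Here I would adapt the Lions–Feireisl theory: establish the weak continuity of the effective viscous flux, $\overline{p(\vr)\vr} - (\tfrac{4}{3}\mu+\eta)\overline{\vr\,\mathrm{div}\,\vu} = \overline{p(\vr)}\,\vr - (\tfrac{4}{3}\mu+\eta)\vr\,\mathrm{div}\,\vu$, by testing the momentum equation with $R_\e\big(\psi\,\nabla\Delta^{-1}[\mathbf{1}_\Omega \widetilde{\vr_\e}]\big)$ and its limit analogue; the nonlocal commutator arguments go through since all compactness above is on the \emph{extended} domain $\Omega$ and the correction from $R_\e$ vanishes in the limit. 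Then one shows both $\vr$ and $\widetilde{\vr_\e}$ are renormalized solutions on $\Omega$ — using \eqref{ren}, which is already posed on $\R^3$ for the $\e$‐level — and compares the renormalized equations for $\vr\log\vr$ versus $\overline{\vr\log\vr}$: the defect measure $\overline{\vr\log\vr}-\vr\log\vr$ satisfies a differential inequality whose right‐hand side is controlled by the effective‐flux identity and is nonpositive, forcing the defect to vanish (it is zero initially by the strong convergence of $\widetilde{\vr_{0,\e}}$). Convexity of $s\mapsto s\log s$ then gives $\widetilde{\vr_\e}\to\vr$ in $L^1((0,T)\times\Omega)$, and interpolation with \eqref{uniform-est} gives strong convergence in $L^q$ for every $q<\frac{5\gamma}{3}-1$, in particular enough to pass to the limit in the pressure since $p$ has growth $\gamma < \frac{5\gamma}{3}-1$. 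The delicate points are that the effective‐flux and renormalization arguments must be carried out with the $\e$‐dependent test functions $R_\e(\cdot)$ and that all error terms produced by $R_\e$ tend to zero uniformly, which is guaranteed by \eqref{ass-tec}.
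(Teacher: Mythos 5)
Your overall architecture matches the paper's: energy bounds, a Bogovski{\u\i}-based pressure improvement in which the time-derivative term is handled via the renormalized continuity equation and a negative-norm bound for $\mathcal{B}_\e\circ\Div$, extension of the equations to $\Omega$, and then the Lions--Feireisl effective-viscous-flux/renormalization machinery. However, there are two concrete gaps.

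First, in the pressure estimate you test directly with $\theta=\frac{2\gamma}{3}-1$ and assert that all terms close using the energy bounds. They do not: for instance the viscous term produces $\|\vr_\e^{\theta}\|_{L^2((0,T)\times\Omega_\e)}$, i.e.\ $\vr_\e\in L^{2\theta}=L^{\frac{4\gamma}{3}-2}$, and since $\frac{4\gamma}{3}-2>\gamma$ exactly when $\gamma>6$, this is not controlled by $L^\infty_tL^\gamma_x$; similar problems occur in the convective and time-derivative terms. The paper resolves this by a bootstrap: it first tests with $\theta=\gamma/2$ (for which everything is controlled by the energy bounds) to obtain $\vr_\e\in L^{3\gamma/2}_{t,x}$ uniformly, and only then runs the argument with $\theta=\frac{2\gamma}{3}-1$, using the intermediate bound together with careful space-time interpolations so that the right-hand side appears with a power strictly less than $\frac{5\gamma}{3}-1$ and can be absorbed. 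Without this intermediate step (or an equivalent absorption argument, which you do not supply), your claim ``this yields $\int p(\vr_\e)\vr_\e^\theta\le C(D)$'' is unjustified.

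Second, for the limit passage in the momentum equation you propose to use Allaire's restriction operator $R_\e\varphi$ as test function, invoking only $R_\e\varphi\to\varphi$ in $W^{1,2}$. This is insufficient and, worse, rests on exactly the estimate the paper explains is unavailable: the pressure is only bounded in $L^{\frac53-\frac1\gamma}_{t,x}$, so the term $\int p(\vr_\e)\Div(R_\e\varphi)$ requires uniform bounds and convergence of $R_\e\varphi$ in $W^{1,q}$ with $q=(\frac53-\frac1\gamma)'>\frac52$, and uniform $W^{1,q}$-bounds for the restriction operator with $q\neq 2$ hinge on $\e$-uniform $L^q$-Stokes estimates in $B_1\setminus\e^{\alpha-1}T$ that are not known (this is precisely the difficulty discussed around \eqref{est-stokes1}). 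The paper avoids the restriction operator entirely at this stage: it uses the scalar capacity cutoffs $g_\e$ of Lemma \ref{g-exi}, decomposes $\varphi=g_\e\varphi+(1-g_\e)\varphi$, and shows the error $F_\e$ is $O(\e^\sigma)$ under \eqref{ass-tec}, which is where the hypothesis $\frac{\gamma-6}{2\gamma-3}\alpha>3$ actually enters (it guarantees a suitable $q\in(5/2,3)$ with $(3-q)\alpha>3$). Relatedly, your claim that the $s\log s$ defect vanishes initially ``by the strong convergence of $\tilde\vr_{0,\e}$'' appeals to a hypothesis that is not assumed: only weak $L^\gamma$ convergence of the initial densities is available, so that step needs a different justification (e.g.\ working with test functions compactly supported in $(0,T)$ as the paper does).
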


We give some remarks on the technical condition \eqref{ass-tec}.

\begin{Remark}\label{rem-tech}

 This condition is needed due to the capacity estimate of the holes and the low integrability of the pressure $p(\vr_\e)$. By \eqref{pp1} and \eqref{uniform-est}, we have the uniform bound for the pressure:
\be\label{p-est}
\|p(\vr_\e)\|_{L^{\frac{5}{3}-\frac{1}{\gamma}}((0,T)\times\Omega_\e)}< C.
\ee

To extend the Navier-Stokes equations (in particular the momentum equation) from spatial domain $\Omega_\e$ to  $\Omega$,
  a idea is to find a family of functions $\{g_\e\}_{\e>0}$ vanishing on the holes and converges to $1$ in some Sobolev space $W^{1,q}(\Omega)$ for some $q$ determinant by the integrability of the pressure, and then to decompose any test function $\varphi \in C_c^\infty((0,T)\times \Omega; \R^3)$ as
\be\label{dec-test}
\varphi=g_\e \varphi+(1-g_\e) \varphi.
\ee
Then $g_\e \varphi$ can be treated as a test function for the momentum equation \eqref{i2} in $\Omega_\e$ provided all the terms in \eqref{Mm2} make sense. Here we need the condition on the largeness of $q$. For the terms related to the part $(1-g_\e) \varphi$, we show that they are small and converge to zero.

Firstly, such a function family $\{g_\e\}_{\e>0}$ exists provided the following condition satisfied (see Lemma \ref{g-exi}):
\be\label{ass-tec1}
(3-q)\alpha-3>0.
\ee
The quantity on the left-hand side of \eqref{ass-tec1} is determined by the uniform $q$-capacity assumptions of the union of all the holes:
\be\label{cap-holes}
{\rm Cap}_q\,\big(\bigcup_{k\in K_\e} T_{\e,k}\big)\leq \sum_{k\in K_\e} {\rm Cap}_q\,(T_{\e,k}) \leq C \e^{-3}\e^{\alpha(3-q)}.
\ee
In the weak formulation of the momentum equations, by testing $g_\e \varphi$, there arises the following term
$$
\int_0^\tau\intO{ p(\tilde \vr_\e) \varphi \cdot \nabla g_\e }\ \dt.
$$
To make sure that the above term makes sense, due to \eqref{p-est}, it is necessary to impose the condition
\be\label{ass-tec2}
\left(\frac{5}{3}-\frac{1}{\gamma}\right)^{-1}+\frac{1}{q}\leq 1.
\ee
The condition \eqref{ass-tec} is sufficient and also necessary to make sure that  there exists $5/2<q<3$ such that conditions \eqref{ass-tec1} and \eqref{ass-tec2} are satisfied. We also remark that condition \eqref{ass-tec2} implies
\be\label{ass-tec3}
\frac{1}{\gamma}+\frac{1}{3}+\frac{1}{q}\leq 1
\ee
for any $\gamma>3/2$ because
$
\left(\frac{5}{3}-\frac{1}{\gamma}\right)^{-1} \geq \frac{1}{\gamma}+\frac{1}{3} \Longleftrightarrow (2\gamma-3)^2\geq 0
$
which is always true. This is needed later on in the proof of Proposition \ref{prop-mom}.

\end{Remark}

\section{Model test functions and the Bogovski{\u\i} operator}

In this section, we introduce two basic tools that will be needed in our proof of the main theorem.

\subsection{Test functions vanishing on the holes}
We introduce the following Lemma:
\begin{Lemma}\label{g-exi}
For any $1<r<3$ such that $(3-r)\alpha-3>0$, there exits a family of functions $\{g_\e\}_{\e>0}\subset W^{1,r}(\Omega)$ such that
\be\label{g-fam}
g_\e =0 \quad \mbox{on}\ \bigcup_{k\in K_\e} T_{\e,k},\quad g_\e \to 1 \quad \mbox{in}\ W^{1,r}(\Omega).
\ee
Moreover, there holds the estimate for some $C$ independent of $\e$:
\be\label{g-est}
\| g_\e - 1\|_{ W^{1,r}(\Omega)}\leq C \e^{\sigma},\quad \sigma:=((3-r)\alpha-3)/r.
\ee
\end{Lemma}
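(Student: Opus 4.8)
The plan is to construct $g_\e$ by hand as a product of radial cutoffs, one localized around each hole, the crucial point being that the transition takes place at the scale $\e^\alpha$ of the holes themselves rather than at the scale $\e$ of the mutual distance. Concretely, I would fix once and for all a smooth profile $\chi\colon[0,\infty)\to[0,1]$ with $\chi\equiv 0$ on $[0,\de_1]$, $\chi\equiv 1$ on $[2\de_1,\infty)$ and $|\chi'|\le C$, and set $\psi_{\e,k}(x):=\chi\big(|x-x_{\e,k}|/\e^\alpha\big)$. Then $\psi_{\e,k}$ vanishes on $B(x_{\e,k},\de_1\e^\alpha)$, which contains $T_{\e,k}$ by \eqref{dis-holes}, it equals $1$ outside $B(x_{\e,k},2\de_1\e^\alpha)$, and $|\nabla\psi_{\e,k}|\le C\e^{-\alpha}$ with support in the annulus $B(x_{\e,k},2\de_1\e^\alpha)\setminus B(x_{\e,k},\de_1\e^\alpha)$. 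Since the hypothesis $(3-r)\alpha>3$ forces $\alpha>3/(3-r)>1$, for $\e$ small one has $2\de_1\e^\alpha<\de_2\e<\de_3\e$, so $B(x_{\e,k},2\de_1\e^\alpha)$ sits inside the control volume $B(x_{\e,k},\de_3\e)$, and these control volumes are pairwise disjoint by assumption. I then set $g_\e:=\prod_{k\in K_\e}\psi_{\e,k}$, which by disjointness simply equals $\psi_{\e,k}$ on each $B(x_{\e,k},\de_3\e)$ and equals $1$ on the rest of $\Omega$; in particular $g_\e\in C^\infty(\Omega)\cap W^{1,\infty}(\Omega)\subset W^{1,r}(\Omega)$ and $g_\e=0$ on $\bigcup_{k} T_{\e,k}$.

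The estimate \eqref{g-est} then follows from a one-hole energy computation summed over the $|K_\e|\le C\e^{-3}$ holes (see \eqref{number-holes}), using that the gradients have pairwise disjoint supports. For the gradient part, $\|\nabla g_\e\|_{L^r(\Omega)}^r=\sum_{k\in K_\e}\int_{B(x_{\e,k},2\de_1\e^\alpha)}|\nabla\psi_{\e,k}|^r\,\dx\le C\,|K_\e|\,\e^{-\alpha r}\,\e^{3\alpha}\le C\,\e^{-3}\e^{\alpha(3-r)}=C\e^{\alpha(3-r)-3}$, so that $\|\nabla g_\e\|_{L^r(\Omega)}\le C\e^\sigma$ with $\sigma=(\alpha(3-r)-3)/r>0$. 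For the lower-order part, since $0\le 1-g_\e\le 1$ and $1-g_\e$ is supported in $\bigcup_{k} B(x_{\e,k},2\de_1\e^\alpha)$, one gets $\|g_\e-1\|_{L^r(\Omega)}^r\le C\,|K_\e|\,\e^{3\alpha}\le C\e^{3(\alpha-1)}\le C\e^{r\sigma}$ for $\e<1$, the last inequality because $3(\alpha-1)-r\sigma=r\alpha\ge 0$. Adding the two contributions yields \eqref{g-est}, and since $\sigma>0$ this is exactly the convergence $g_\e\to 1$ in $W^{1,r}(\Omega)$ claimed in \eqref{g-fam}.

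There is no genuine analytic obstacle in this lemma — it is a capacity/energy computation — so the only thing that really matters is the scaling choice: one must cut off at the hole scale $\e^\alpha$, since a cutoff performed at the mutual-distance scale $\e$ would contribute $|K_\e|\,\e^{-r}\e^{3}=\e^{-r}$ to $\|\nabla g_\e\|_{L^r}^r$ and blow up. The only points requiring a little care are the bookkeeping that $2\de_1\e^\alpha<\de_3\e$ for small $\e$ (so the local cutoffs fit inside the disjoint control volumes and the product collapses to a single factor on each of them), and the observation that the standing hypothesis $(3-r)\alpha>3$ automatically gives $\alpha>1$, which is what makes both $\|g_\e-1\|_{L^r}\to 0$ and the geometric inclusion above valid. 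Taking $\chi$ smooth is convenient only to keep $g_\e\in C^\infty$; a piecewise-linear radial profile gives the same bounds.
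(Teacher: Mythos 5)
Your proposal is correct and follows essentially the same route as the paper: an explicit cutoff around each hole transitioning at the scale $\e^{\alpha}$, combined with the hole count \eqref{number-holes} to bound $\|\nabla g_\e\|_{L^r}$ by $C\e^{\sigma}$ and $\|g_\e-1\|_{L^r}$ by the (smaller) power $C\e^{(3\alpha-3)/r}$. Your write-up is merely more explicit about the disjointness of the supports and the verification that $(3\alpha-3)/r\ge\sigma$.
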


\begin{proof}[Proof of Lemma \ref{g-exi}] By (\ref{dis-holes}), there exits $g_\e\in C^\infty (\R^3)$ such that
$$
g_\e=0 \ \mbox{on} \  \bigcup_{k\in K_\e} T_{\e,k},\quad g_\e=1 \ \mbox{on} \  \big(\bigcup_{k\in K_\e} B(x_{\e,k}, \de_0 \e^\a)\big)^c,\quad \|\nabla g_\e\|_{L^\infty(\R^3)}\leq C \e^{-\alpha}.
$$
Recall that the counting measure of $K_\e$ satisfies (\ref{number-holes}). Then direct calculation gives
$$
\| g_\e - 1\|_{ L^{r}(\Omega)}\leq C \e^{(3\alpha-3)/r},\quad \| \nabla g_\e \|_{ L^{r}(\Omega)}\leq C \e^{(3\alpha-3)/r-\alpha}=C \e^{\sigma}.
$$
Such $g_\e$ fulfill the request in Lemma \ref{g-exi}.

\end{proof}


\subsection{The Bogovski{\u\i} operator}\label{sec:Bog}

The aim of this section is to prove the following proposition:
\begin{Proposition}\label{prop-Bog}
Let $\Omega_\e$ defined as in \eqref{dis-holes}-\eqref{domain} with $\alpha\geq 1$, then for any $1<q<\infty$, there exists a linear operator $\mathcal{B}_\e \,: \, L^q(\Omega_\e)\to W_{0}^{1,q}(\Omega_\e;\R^3)$ such that for any $f\in L^q(\Omega_\e)$ with $\int_{\Omega_\e} f\ \dx =0$, there holds
\be\label{pro-div1}
\Div \mathcal{B}_\e(f)=f\  \mbox{in} \ \Omega_\e, \quad \|\mathcal{B}_\e(f)\|_{W_{0}^{1,q}(\Omega_\e;\R^3)}\leq C(1+\e^{((3-q)\a-3)/q}) \|f\|_{L^q(\Omega_\e)}
\ee
for some constant $C$ independent of $\e$.

{  For any $r >3/2$, the linear operator $\mathcal B_\e$ can be extended as a linear operator from $\{\Div \ggg : \vc{g}\in L^r(\Omega_\e;\R^3),
  \vc{g} \cdot {\bf n}=0 \mbox{ on } \d \O_\e\}$ to $L^r(\O_\e;\R^3)$ satisfying
\be\label{pro-div2}
 \|\mathcal{B}_\e(\Div {\bf g})\|_{L^{r}(\Omega_\e;\R^3)}\leq C \|{\bf g}\|_{L^r(\Omega_\e;\R^3)},
\ee
for all some constant $C$ independent of $\e$.}
%
\end{Proposition}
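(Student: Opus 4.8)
The plan is to build $\mathcal{B}_\e$ exactly along the lines sketched in the introduction for the restriction-operator construction, but using the John-domain Bogovski\u\i{} operator from Theorem \ref{thm-Bog} applied on the unperforated domain $\Omega$, composed with a suitable restriction operator $R_\e$ mapping into $\Omega_\e$. First I would fix $q$ and recall that $\Omega$, being $C^2$ and bounded, is in particular a John domain, so Theorem \ref{thm-Bog} furnishes $\mathcal{B}_\Omega : L^q_0(\Omega)\to W^{1,q}_0(\Omega;\R^3)$ with $\Div\mathcal{B}_\Omega(f)=f$ and $\|\mathcal{B}_\Omega(f)\|_{W^{1,q}_0}\le C\|f\|_{L^q}$, and moreover the negative-norm bound \eqref{thm-div2}. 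Then I would recall Allaire's restriction operator $R_\e: W^{1,q}_0(\Omega;\R^3)\to W^{1,q}_0(\Omega_\e;\R^3)$ (its $W^{1,q}$-version, see \cite{L-hom1,L-hom2,BS}), which satisfies $R_\e(v)=v$ on any $v$ vanishing near the holes, preserves the divergence in the sense $\Div R_\e(v)=\Div v+\tfrac{1}{|\text{cell}|}\int(\cdots)$ localised in each control ball, and obeys the operator-norm bound $\|R_\e(v)\|_{W^{1,q}_0(\Omega_\e)}\le C(1+\e^{((3-q)\alpha-3)/q})\|v\|_{W^{1,q}_0(\Omega)}$ coming from the capacity/Stokes estimate on the model cell $B_1\setminus\e^{\alpha-1}T$. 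Setting $\mathcal{B}_\e := R_\e\circ\mathcal{B}_\Omega\circ E$ (zero extension of $f$), one checks $\Div\mathcal{B}_\e(f)=f$ in $\Omega_\e$ — the correction terms introduced by $R_\e$ vanish because $\int E(f)$ over each cell is compensated by the zero-mean of $f$ on $\Omega_\e$, exactly as in \cite{FL1} — and the $W^{1,q}$-bound \eqref{pro-div1} follows by composing the three operator norms.

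For the negative-space bound \eqref{pro-div2} the idea is the same composition, but now the middle operator is used in its extended form \eqref{thm-div2}–\eqref{thm-div3} from Theorem \ref{thm-Bog}: given $\vc g\in L^r(\Omega_\e;\R^3)$ with $\vc g\cdot\vc n=0$ on $\partial\Omega_\e$, its zero extension $E(\vc g)$ lies in $L^r(\Omega;\R^3)$ and, crucially, still satisfies $E(\vc g)\cdot\vc n=0$ on $\partial\Omega$ in the weak sense (because $\vc g\cdot\vc n=0$ on the whole of $\partial\Omega_\e\supset\partial\Omega$, so no surface term is created across either the outer boundary or the holes). Hence $\Div E(\vc g)\in\{h\in (W^{1,r'}(\Omega))^*:\langle h,1\rangle=0\}$ and \eqref{thm-div3} gives $\mathcal{B}_\Omega(\Div E(\vc g))\in L^r(\Omega;\R^3)$ with $\|\mathcal{B}_\Omega(\Div E(\vc g))\|_{L^r(\Omega)}\le C\|E(\vc g)\|_{L^r(\Omega)}=C\|\vc g\|_{L^r(\Omega_\e)}$. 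One then applies $R_\e$ and must use the key point that for $r>3/2$ the $L^r\to L^r$ operator norm of $R_\e$ — acting now on functions that are already divergence-controlled, so that only the $L^r$ (not $W^{1,r}$) norm is needed — is bounded uniformly in $\e$. This is precisely the content of the refined restriction estimate underlying \cite{L-hom2} (and the reason the range $r>3/2$ appears): the model-cell Stokes estimate in $L^r$ on $B_1\setminus\e^{\alpha-1}T$ degenerates only in the gradient norm, while the function itself is controlled uniformly. Putting the pieces together yields $\|\mathcal{B}_\e(\Div\vc g)\|_{L^r(\Omega_\e)}\le C\|\vc g\|_{L^r(\Omega_\e)}$ with $C$ independent of $\e$, and one checks the weak identity $\langle\mathcal{B}_\e(\Div\vc g),\nabla\phi\rangle=\langle\vc g,\nabla\phi\rangle$ for $\phi\in W^{1,r'}(\Omega_\e)$ by testing through the construction, extending $\phi$ suitably and using \eqref{thm-div3}.

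The main obstacle is the uniform-in-$\e$ $L^r$ bound on $R_\e$ for $r>3/2$ and the verification that the divergence-correction terms built into $R_\e$ do not spoil the weak identity in \eqref{pro-div2}. Unlike the $W^{1,q}$ estimate \eqref{pro-div1}, where one simply accepts the (possibly blowing-up) factor $\e^{((3-q)\alpha-3)/q}$, here one genuinely needs that, when the input is of the special form $\Div\vc g$ with $\vc g\in L^r$ and $\vc g\cdot\vc n=0$, the extra terms that $R_\e$ introduces on each control ball $B(x_{\e,k},\delta_3\e)$ are themselves of the form $\Div(\text{something }L^r\text{-bounded})$ and carry no $\e$-loss — this is the structural miracle isolated in \cite{L-hom2,DFL} and the analogue, at the level of the restriction operator, of the negative-norm statement \eqref{thm-div3} for the Bogovski\u\i{} operator itself. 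I would therefore spend most of the work recalling the precise mapping properties of $R_\e$ from \cite{ALL-NS1,L-hom2}, checking that the zero-extension preserves the normal-trace condition, and tracking that every correction term stays in $L^r$ with an $\e$-independent constant; the remaining steps (divergence identity, triangle inequalities, the $W^{1,q}$ bound) are routine bookkeeping.
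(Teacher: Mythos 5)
Your overall architecture (zero extension, Bogovski{\u\i} on the full domain $\Omega$, then a restriction operator $R_\e$) matches the paper, and you have correctly identified where the difficulty sits: the uniform-in-$\e$ $L^r$ bound for $R_\e$ acting on divergence-form data. But precisely at that point your plan has a genuine gap. You propose to take $R_\e$ to be Allaire's restriction operator (its $W^{1,q}$ version from \cite{ALL-NS1,L-hom2,BS}) and to "recall" its uniform $L^r\to L^r$ mapping property on inputs of the form $\mathcal{B}(\Div \ggg)$. That property is not in those references, and the paper explicitly explains why this route is blocked: running Allaire's construction at the level of $L^r$ / negative Sobolev norms would require the very weak Stokes estimate \eqref{est-stokes2} on the model cell $B_1\setminus\e^{\alpha-1}T$ with an $\e$-independent constant, which is unknown and, in the authors' view, likely false for $r\neq 2$. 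So the "structural miracle" you invoke cannot be imported from the cited works; it has to be built, and that construction is the actual content of the proof.

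What the paper does instead is construct a \emph{different} restriction operator from scratch: with cut-offs $\chi_k$ (scale $\e$) and $\phi_k$ (scale $\e^\alpha$) it sets
$R_\e(u)=u-\sum_k\bigl(b_k(u)+\beta_k(u)-\mathcal{B}_{E_{\e,k}}(\Div(b_k(u)+\beta_k(u)))\bigr)$,
where $\mathcal{B}_{E_{\e,k}}$ is the Bogovski{\u\i} operator of Theorem \ref{thm-Bog} on the punctured cells $E_{\e,k}=B(x_{\e,k},\delta_3\e)\setminus T_{\e,k}$, which are \emph{uniform John domains}. The uniform $L^r$ bound then comes from splitting
$\Div(b_k+\beta_k)=\Div(\chi_k\ggg)+\Div(\chi_k(u-\ggg))+\Div((\phi_k-\chi_k)\langle u\rangle_{D_{\e,k}})$:
the first piece is handled by the negative-norm bound \eqref{thm-div3} on $E_{\e,k}$ (this is where Theorem \ref{thm-Bog} enters, not on $\Omega$ but on the cells), and the other two by the $W^{1,\tilde r}_0$ bound with $1/\tilde r=1/r+1/3$ plus Sobolev embedding, which is where $r>3/2$ is used; the absence of an $\e$-loss rests on the elementary fact that $\|\nabla\chi_k\|_{L^3}+\|\nabla\phi_k\|_{L^3}\le C$ uniformly in $\e$ and $k$. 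Your first part ($W^{1,q}$ bound with the factor $\e^{((3-q)\alpha-3)/q}$, which indeed comes from the $\beta_k$ term) and your observation that the zero extension preserves the weak normal-trace condition are fine, but without the explicit cell-wise construction and the three-way splitting above, the second estimate \eqref{pro-div2} is asserted rather than proved.
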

The existence of an operator such that the first equation in (\ref{pro-div1}) is satisfied is classical, known as Bogovski{\u\i} operator. The key point of this lemma is to give an estimate for the operator norm, in particular, to show the dependency on $\e$.

The analytical tool to be able to get estimates uniformly in $\e$ is Theorem~\ref{thm-Bog}.  So we prove Theorem \ref{thm-Bog} first and then prove Proposition \ref{prop-Bog}.

\begin{proof}[Proof of Theorem \ref{thm-Bog}]

The first statement \eqref{thm-div1} is just~\cite[Thm. 5.2]{DieRuzSch10}. Hence we will show, that the same operator also satisfies \eqref{thm-div2}.

We start by using the fact that compactly supported functions are dense in
$$\set{ f\in (W^{1,r'}(\Omega))^*\,:\,\skp{f}{1} = 0}.$$
Indeed, as can be seen in \cite[Lemma 10.4]{F-N-book} there exists
$w\in L^r(\Omega)$, such that
\[
\skp{f}{\phi}:=\int_{\Omega}w\cdot\nabla\phi\,dx,\text{ and }\norm{w}_{L^r(\Omega)}=\norm{f}_{(W^{1,r'}(\Omega))^*}.
\]
We approximate (using the density of test functions in Lebesgue spaces) with functions
$w_\delta\in C^\infty_c(\Omega;\R^3)$, such that $w_\delta\to w$ in $L^r(\Omega)$ and $\norm{w_\delta}_{L^r(\Omega)}\leq 2\norm{f}_{(W^{1,r'}(\Omega))^*}$.
%
 By partial integration we find $\Div w_\delta\in C^\infty_{c,0}(\Omega)$.

{  This allows us to apply the decomposition theorem~\cite[Thm. 4.2]{DieRuzSch10}.} I.e. for the chain covering $\mathcal{W}$ introduced in~\cite[Def. 3. 11]{DieRuzSch10}  we have the family of linear operators $T_i:C^\infty_{c,0}(\Omega)\to C^\infty_{c,0}(W_i)$, for all $W_i\in\mathcal{W}$. By the definition of the reference paper we find that $\mathcal{B}(\Div w_\delta):=\sum_{W_i\in\mathcal{W}}B_i T_i\Div w_\delta$ is well defined. Observe, that $B_i$ is the standard Bogovskij operator on the cube $W_i$.
Since, $ T_i \Div w_\delta\in C^\infty_{c,0}(W_i)$ we find by \cite[Theorem 10.11]{F-N-book} and~\cite[Thm. 4.2]{DieRuzSch10} as well as the finite intersection property of the $W_i$, that
\begin{align*}
\|\mathcal{B}\Div w_\delta\|_{L^r(\Omega)}^r & \leq C \sum_{W_i\in\mathcal{W}} \|B_i\Div T_i w_\delta\|_{L^r(\Omega)}^r\leq  C \sum_{W_i\in\mathcal{W}}\|T_i  w_\delta\|_{L^r(\Omega;\R^3)}^r
\\
&\leq  C \|w_\delta\|_{L^r(\Omega;\R^3))}^r\leq  C \|f\|_{(W^{1,r'}(\Omega))^*)}^r.
\end{align*}
Moreover, also by \cite[Theorem~4.2]{DieRuzSch10}, as well as the property of $B_i$, we find
\begin{align}
\Div \mathcal{B} w_\delta=\sum_{W_i\in\mathcal{W}} \Div B_i T_i w_\delta = \sum_{W_i\in\mathcal{W}} T_i w_\delta =w_\delta.
\end{align}
The operator $\mathcal{B}$ can be extended accordingly to $\{f\in (W^{1,r'}(\Omega))^*\,:\,\skp{f}{1}=0\}$ by letting $\delta\to 0$.
\end{proof}

\begin{proof}[Proof of Proposition \ref{prop-Bog}] Let $f\in L^q(\Omega_\e)$ with $\int_{\Omega_\e} f\ \dx=0$,  we consider the extension function $\tilde f=E(f)$ defined as
\be\label{tilde-f}
\tilde f=f \ \mbox{in}\ \Omega_\e,\quad \tilde f=0 \ \mbox{on}\  \Omega\setminus \Omega_\e=  \bigcup_{k\in K_\e} T_{\e,k}.
\ee
Then by employing the classical Bogovsii's operator, there exists $u={\mathcal B}(\tilde f)\in W_0^{1,q}(\Omega;\R^3)$ such that
\be\label{div-f-u}
\Div u=\tilde f \ \mbox{in} \ \Omega \quad \mbox{and} \quad \|u\|_{W_0^{1,q}(\Omega;\R^3)}\leq C\|\tilde f\|_{L^q(\Omega)}= C\|f\|_{L^q(\Omega_\e)}
\ee
for some constant $c$ depends only on $\Omega$ and $q$.

In each ball $B(x_{\e,k}, \de_3\e)$, we introduce two cut-off functions $\chi_k$ and $\phi_k$ such that:
\ba\label{cut-off-k}
&\chi_k\in C_c^\infty( B(x_{\e,k}, \de_3\e)), \ \chi_k=1 \ \mbox{on}\  \overline{B(x_{\e,k}, \de_2\e)}, \  |\nabla_x\chi_k|\leq C \e^{-1}; \\
 &\phi_k\in C_c^\infty(B(x_{\e,k}, \de_1\e^\alpha)), \ \phi_k=1 \ \mbox{on}\ \overline{B(x_{\e,k}, \de_0\e^\alpha)}, \  |\nabla_x\phi_k|\leq C \e^{-\alpha}.
\ea
We denote
\ba\label{cut-off-k1}
D_{\e,k}:=B(x_{\e,k}, \de_3\e)\setminus B(x_{\e,k}, \de_2\e), \quad E_{\e,k}:= B(x_{\e,k}, \de_3\e)\setminus T_{\e,k}.
\ea
We then define two localizations of $u$ near the holes:
\ba\label{cut-off-k2}
b_k(u) := \chi_k \left(u-\langle u\rangle_{D_{\e,k}}\right)\in W_0^{1,q}(B(x_{\e,k}, \de_3\e)),\ \b_k(u) := \phi_k \langle u\rangle_{D_{\e,k}} \in W_0^{1,q}(B(x_{\e,k}, \de_1\e^\alpha)),
\ea
where
$$\langle u\rangle_{D_{\e,k}}:=\frac{1}{|D_{\e,k}|}\int_{D_{\e,k}} u\  \dx.$$

Since the $E_{\e,k}$ are uniform John domains, the following corollary follows immediately by Theorem~\ref{thm-Bog} and \eqref{thm-div3}.
\begin{Corollary}\label{lem-Bog1}
For any $1<q<\infty$, there exit a linear operator $\mathcal{B}_{E_{\e,k}}: L_0^q(E_{\e,k}) \to W_0^{1,q}(E_{\e,k};\R^3)$
such that for any $ f\in L_0^q(E_{\e,k})$
\ba\label{bog-E-F1}
&\Div \mathcal{B}_{E_{\e,k}}(f)= f,\quad \| \mathcal{B}_{E_{\e,k}}( f) \|_{W_0^{1,q}(E_{\e,k};\R^3)}\leq C \|f\|_{L^q(E_{\e,k})},
\ea
for some constant $C$ independent of $\e$.

 For any $\ggg\in L^q(E_{\e,k};\R^3)$ with $\ggg \cdot {\bf n}=0$ on $\d E_{\e,k}$, we find
 \[
\skp{\mathcal{B}_{E_{\e,k}}(\Div \ggg)}{\nabla \phi}=\skp{\ggg}{\nabla \phi}\text{ for all }\phi\in C^\infty(E_{\e,k}).
\]
Moreover,
\ba\label{bog-E-F2}
&\|\mathcal{B}_{E_{\e,k}}(\Div \ggg) \|_{L^q(E_{\e,k};\R^3)}\leq C \|\ggg\|_{L^q(E_{\e,k};\R^3)},
\ea
for some constant $C$ independent of $\e$.

\end{Corollary}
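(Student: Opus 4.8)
The plan is to read the Corollary off from Theorem~\ref{thm-Bog} and the Remark that follows it (in particular \eqref{thm-div3}), once we know that the sets $E_{\e,k}$ of \eqref{cut-off-k1} are John domains --- equivalently, satisfy the emanating chain condition of \cite[Def.~3.5]{DieRuzSch10} --- with constants that do not deteriorate as $\e\to0$ and as $k$ varies; this uniform geometry is the only substantial point, and everything else is just invoking the theorem. For the reduction, fix $\e\in(0,1)$ and $k\in K_\e$. The affine change of variables $x\mapsto(x-x_{\e,k})/\e$ maps $E_{\e,k}=B(x_{\e,k},\de_3\e)\setminus T_{\e,k}$ onto the model domain $\widehat E_{\e,k}:=B(0,\de_3)\setminus(\e^{\a-1}T_k)$, since $(T_{\e,k}-x_{\e,k})/\e=\e^{\a-1}T_k$. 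By \eqref{dis-holes} the obstacle satisfies $\e^{\a-1}T_k\subset B(0,\de_0\e^{\a-1})$, so $\overline{\e^{\a-1}T_k}\subset B(0,\de_0)\subset B(0,\de_3)$ with ${\rm dist}(\e^{\a-1}T_k,\d B(0,\de_3))\geq\de_3-\de_0>0$ uniformly, and $\e^{\a-1}T_k$ is a dilate of the $C^2$ set $T_k$, whose $C^2$-character is bounded uniformly in $k$; in particular $T_k$ enjoys a uniform exterior ball condition. Because the John property and the emanating chain condition are invariant under translations and dilations, it suffices to bound the chain constants of the $\widehat E_{\e,k}$ uniformly in $\e$ and $k$; those of $E_{\e,k}$ are then the same.

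\emph{Uniform chain constants (the main obstacle).} Each $\widehat E_{\e,k}$ is the fixed ball $B(0,\de_3)$ with a small, uniformly smooth obstacle removed that stays a fixed distance from $\d B(0,\de_3)$. Fix a John center $x_\ast$ with $|x_\ast|=\de_3/2$ (it lies outside $B(0,\de_0)$, hence in $\widehat E_{\e,k}$). A point $y\in\widehat E_{\e,k}$ at distance at least $c\,\e^{\a-1}$ from $\e^{\a-1}T_k$ is joined to $x_\ast$ by a segment, mildly perturbed to stay in $B(0,\de_3)$, which is a John curve of $\e$-independent aperture. A point $y$ within distance $O(\e^{\a-1})$ of $\e^{\a-1}T_k$ is handled by rescaling by $\e^{1-\a}$: one lands in $B(0,\de_3\e^{1-\a})\setminus T_k\supset B(0,\de_3)\setminus T_k$, and the uniform exterior geometry of $T_k$ (uniform exterior ball condition, hence uniform twisted-cone access along $\d T_k$) allows reaching the sphere $\{|x|=\de_3/2\}$ along a curve of $\e$-independent aperture, after which one continues as in the first case; for $\a=1$ the obstacle is $T_k$ itself and no rescaling is needed. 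The chain (John) constants obtained this way depend only on $\de_0,\de_3$ and the uniform $C^2$-bound of the $T_k$, hence are uniform in $\e$ and $k$. I expect the careful version of this gluing to be the only place where genuine work is needed.

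\emph{Conclusion.} Apply Theorem~\ref{thm-Bog} with $\Omega=E_{\e,k}$ and exponent $q$. Part \eqref{thm-div1} furnishes a linear operator $\mathcal{B}_{E_{\e,k}}:L^q_0(E_{\e,k})\to W^{1,q}_0(E_{\e,k};\R^3)$ with $\Div\mathcal{B}_{E_{\e,k}}(f)=f$ and the stated gradient bound --- that is the first line of \eqref{bog-E-F1}. If $\ggg\in L^q(E_{\e,k};\R^3)$ with $\ggg\cdot{\bf n}=0$ on $\d E_{\e,k}$ in the weak sense, then, $E_{\e,k}$ being a John domain, \eqref{thm-div3} of the Remark (with its vector field taken to be $\ggg$) applies to the same operator and yields $\langle\mathcal{B}_{E_{\e,k}}(\Div\ggg),\nabla\phi\rangle=\langle\ggg,\nabla\phi\rangle$ for all $\phi\in C^\infty(E_{\e,k})$ (in fact for $\phi\in W^{1,q'}(E_{\e,k})$) together with $\|\mathcal{B}_{E_{\e,k}}(\Div\ggg)\|_{L^q(E_{\e,k};\R^3)}\leq C\|\ggg\|_{L^q(E_{\e,k};\R^3)}$, i.e.\ \eqref{bog-E-F2}. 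In every estimate $C$ is the constant delivered by Theorem~\ref{thm-Bog}, which depends only on the emanating chain constants of $E_{\e,k}$ --- uniform by the previous step --- so $C$ is independent of $\e$ and $k$. As a check on the $\e$-dependence, one may instead run the theorem on $\widehat E_{\e,k}$ and transfer the operator by $v\mapsto\e\,v(\cdot/\e)$ (resp.\ $w\mapsto w(\cdot/\e)$ for the $\Div\ggg$ version); the homogeneities $\|\nabla(\e\,v(\cdot/\e))\|_{L^q(E_{\e,k})}=\e^{3/q}\|\nabla v\|_{L^q(\widehat E_{\e,k})}$ and $\|f(\cdot/\e)\|_{L^q(E_{\e,k})}=\e^{3/q}\|f\|_{L^q(\widehat E_{\e,k})}$ (and their analogues for \eqref{bog-E-F2}) show that no power of $\e$ survives.
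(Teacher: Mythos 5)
Your proposal is correct and follows exactly the paper's route: the paper's entire proof of this corollary is the one-line observation that the $E_{\e,k}$ are uniform John domains, after which \eqref{bog-E-F1} and \eqref{bog-E-F2} follow immediately from Theorem~\ref{thm-Bog} and \eqref{thm-div3}. You merely supply the geometric verification (rescaling to unit scale, scale-invariance of the chain condition, uniform $C^2$ character of the $T_k$) that the paper leaves implicit, and your scaling cross-check of the $\e$-independence of the constants is consistent with the homogeneity of the estimates.
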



\medskip

We now define the restriction operator in the following way:
\be\label{restr-1}
R_\e (u) := u- \sum_{k\in K_\e}  \left( b_k(u) +\beta_k (u) - {\mathcal B}_{E_{\e,k}}(\Div ( b_k(u) + \beta_k(u) ) \right).
\ee
We first check that $R_\e (u)$ is well defined. Since $b_k,\beta_k\in W^{1,q}(E_{\ep,k})$ to this end, it is sufficient to show that
\be\label{zero-mean1}
\int_{E_{\e,k}} \Div (b_k+\beta_k)\,\dx =0.
\ee
Indeed, on one hand, by the zero trace \eqref{cut-off-k2}, we have
\be\label{zero-mean2}
\int_{ B(x_{\e,k}, \de_3\e)} \Div (b_k+\beta_k)\,\dx =0.
\ee
On the other hand, by \eqref{tilde-f}-\eqref{cut-off-k2}, we have
\ba\label{zero-mean3}
&\Div (b_k+\beta_k) = \chi_k \Div u+ \nabla_x \chi_k \cdot (u-\langle u\rangle_{D_{\e,k}})+\nabla_x \phi_k\cdot \langle u\rangle_{D_{\e,k}} =0, \quad  \mbox{on} \quad T_{\e,k}.
\ea
Equations \eqref{zero-mean2} and \eqref{zero-mean3} imply \eqref{zero-mean1}. Hence, we can apply Corollary~\ref{lem-Bog1} on each member and \eqref{restr-1} is well defined.
Moreover, applying Poincar\'e inequality gives
\begin{align*}
\int_{\Omega} \abs{\nabla R_\e(u)}^q\, \dx
&\leq C \int_{\Omega} \abs{\nabla u}^q\, \dx
+C \sum_{k\in K_\e}\int_{B(x_{\e,k}, \de_3\e)}\chi_k\abs{\nabla u}^q+\abs{u-\langle{u}\rangle_{D_{\e,k}}}^q\abs{\nabla \chi}^q\, \dx
\\
&\quad+ C \sum_{k\in K_\e}\int_{B(x_{\e,k}, \de_1\e^\alpha)}\abs{\nabla_x \phi_k}^q\abs{\langle{u}\rangle_{D_{\e,k}}}^q\, \dx
\\
&\leq  C \int_{\Omega} \abs{\nabla u}^q\, \dx
+C \sum_{k\in K_\e}\int_{D_{\ep,k}}\Bigabs{\frac{u-\langle{u}\rangle_{D_{\e,k}}}{\ep}}^q\, \dx + C \sum_{k\in K_\e}\abs{\langle{u}\rangle_{D_{\e,k}}}^q |B(x_{\e,k}, \de_1\e^\alpha)|
\\
&\leq  C \int_{\Omega} \abs{\nabla u}^q\, \dx + C \sum_{k\in K_\e}\int_{D_{\ep,k}}\abs{\nabla u}^q\, \dx + C \sum_{k\in K_\e}\abs{\langle{u}\rangle_{D_{\e,k}}}^q\ep^{(3-q)\alpha}\, \dx
\\
&\leq  C \int_{\Omega} \abs{\nabla u}^q\, \dx
+ C \sum_{k\in K_\e}\int_{D_{\e,k}}\abs{u}^q\, \dx \, \e^{(3-q)\alpha-3}
\\
&\leq C(1+ \e^{(3-q)\alpha-3})\norm{u}_{W^{1,q}(\Omega)}^q.
\end{align*}

We claim that the operator ${\mathcal B}_\e$ defined in the following way fulfills the desired properties in Proposition \ref{prop-Bog}:
\be\label{def-B-e}
{\mathcal B}_\e(f):= R_\e (u)= R_\e ({\mathcal B} (\tilde f))= R_\e \circ {\mathcal B} \circ E (f).\nn
\ee

By the definition of $R_\e (u)$ in \eqref{restr-1} and the property of $u$ in \eqref{div-f-u}, we have
\be\label{pt-Ru1}
R_\e (u) \in W_0^{1,q}(\O;\R^3),\quad \Div R_\e (u)= \Div u =\tilde f \quad \mbox{in $\O$}.\nn
\ee
Moreover, for any $x\in T_{\e,k}$ for some $k\in K_\e$, we have by using  \eqref{cut-off-k} and \eqref{cut-off-k1} that
\ba\label{pt-Ru2}
R_\e (u)(x) = u(x)- \chi_k(x) \left(u(x)-\langle u\rangle_{D_{\e,k}}\right)- \phi_k (x)\langle u\rangle_{D_{\e,k}}=0.\nn
\ea
Thus,
\be\label{pt-Ru3}
R_\e (u) \in W_0^{1,q}(\O_\e;\R^3),\quad \Div R_\e (u)= f \quad \mbox{in $\O_\e$}.\nn
\ee

\medskip

Now we prove the second part of Proposition \ref{prop-Bog}.  Let $r>3/2$ and $\vc{g}\in L^r(\Omega_\e;\R^3)$ with $\vc{g} \cdot {\bf n}=0$ on $\d \O_\e$.  Let $u:=\mathcal B  (\Div \tilde \ggg)\in  L^r(\O;\R^3) $ with $\mathcal B$ be the standard Bogovski{\u\i} operator in $\O$. Moreover, there holds
\be\label{est-B-gg}
\|u\|_{L^r(\O;\R^3)} \leq C \|\ggg\|_{L^r(\O_\e;\R^3)},
\ee
where the constant $C$ only depends on $r$ and the Lipschitz character of the domain $\O$.

We assume $\Div \ggg \in L^q(\O_\e)$ for some $q>1$. This ensures, together with the assumption $\bfg\cdot\bfn=0$ on $\partial \Omega$, that $ u\in W_0^{1,q}(\O;\R^3)$, then $R_\e(u)\in W_0^{1,q}(\O_\e;\R^3)$ is well defined where $R_\e$ is the restriction operator constructed by \eqref{restr-1}. Our goal is to show the following uniform estimates
\be\label{R-u-est}
\| R_\e(u) \|_{L^{r}(\O_\e;\R^3)} \leq C \| \ggg \|_{L^{r}(\O_\e;\R^3)},
\ee
for some constant $C$ independent of $\e$ and $\|\Div\ggg\|_{L^q(\O_\e)}$.

By \eqref{cut-off-k2} and \eqref{cut-off-k}, we have
\ba\label{est-bk}
& \int_\O | b_k(u)|^r \,\dx \leq \int_{B(x_{\e,k},\de_3 \e)} |u-\langle u\rangle_{D_{\e,k}}|^r\,\dx \leq C \int_{B(x_{\e,k},\de_3 \e)} |u|^r\,\dx,\\
&\int_\O |\b_k(u)|^r \,\dx  \leq  \int_{B(x_{\e,k},\de_1 \e^\a)} | \langle u\rangle_{D_{\e,k}}|^r \,\dx \leq C \e^{3(\a -1)}\int_{B(x_{\e,k},\de_3 \e)} | u |^r \,\dx,
\ea
where we used the fact
$
| \langle u\rangle_{D_{\e,k}}|^r \leq \langle |u|^r \rangle_{D_{\e,k}}.
$

Direct calculation gives, using the fact that $\Div \ggg=\Div u$, that
\ba\label{div-bk-u1}
\Div (b_k (u)+\beta_k (u)) &= \chi_k \Div u + \nabla_x \chi_k \cdot (u-\langle u \rangle_{D_{\e,k}}) + \nabla_x \phi_k \cdot \langle u \rangle_{D_{\e,k}} \\
&= \chi_k \Div  \ggg + \nabla_x \chi_k \cdot (u-\langle u \rangle_{D_{\e,k}}) + \nabla_x \phi_k \cdot \langle u \rangle_{D_{\e,k}}\\
&=  \Div (\chi_k \ggg) - \nabla_x\chi_k \cdot \ggg + \nabla_x \chi_k \cdot (u-\langle u \rangle_{D_{\e,k}}) + \nabla_x \phi_k \cdot \langle u \rangle_{D_{\e,k}}
\\
&=  \Div (\chi_k \ggg) + \Div(\chi_k (u-\ggg)) + \Div((\phi_k-\chi_k)\langle u \rangle_{D_{\e,k}}).
\ea
We will estimate the operator $\mathcal{B}_{E_{\e,k}}$ on each three divergences separately.
Observing
$$\chi_k\tilde \ggg = 0 \ \mbox{on} \ \d B(x_{\e,k},\de_3 \e), \quad  \chi_k\tilde \ggg \cdot {\bf n} =0 \ \mbox{on}\ \d T_{\e,k},$$
and applying Corollary \ref{lem-Bog1} implies
\be\label{div-bk-u2}
\|\mathcal{B}_{E_{\e,k}}(\Div (\chi_k \ggg))\|_{L^r(E_{\e,k};\R^3)} \leq C \|\chi_k \ggg\|_{L^r(E_{\e,k};\R^3)}\leq C\| \ggg \|_{L^r(E_{\e,k};\R^3)}.
\ee

For the other two terms we will use the $W^{1,q}$-bounds of the operator $\mathcal{B}_{E_{\e,k}}$.
We find by the support properties of $\chi, \ \phi$  and the fact $\Div u = \Div \ggg$  that
\[
\int_{E_{\e,k}}\Div(\chi_k (u-\ggg))\, dx = \int_{B(x_{\e,k},\de_3 \e)}\Div(\chi_k (u-\ggg))\, dx  = 0,
\]
\[
\int_{E_{\e,k}}\Div((\phi_k-\chi_k)\langle u \rangle_{D_{\e,k}})\,dx = \int_{B(x_{\e,k},\de_3 \e)} \Div((\phi_k-\chi_k)\langle u \rangle_{D_{\e,k}})\,dx = 0.
\]
Since $r>3/2$, we let $\tilde r>1 $ such that $1/\tilde r = 1/r + 1/3 $. Then applying Corollary \ref{lem-Bog1}, together with the Sobolev embedding inequality, implies
\ba\label{div-bk-u3}
&\|\mathcal{B}_{E_{\e,k}}(\nabla_x\chi_k \cdot (\ggg-u))\|_{L^r(E_{\e,k};\R^3)} \leq C \|\mathcal{B}_{E_{\e,k}}(\nabla_x\chi_k \cdot (\ggg-u))\|_{W^{1,\tilde r}_0(E_{\e,k};\R^3)} \\
&\quad \leq  C \|\nabla_x\chi_k \cdot (\ggg-u)\|_{L^{\tilde r}(E_{\e,k})} \leq C \|\nabla_x\chi_k\|_{L^{3}(E_{\e,k};\R^3)} (\|\ggg\|_{L^{r}(E_{\e,k};\R^3)}+ \|u\|_{L^{r}(E_{\e,k};\R^3)}),\\
&\|\mathcal{B}_{E_{\e,k}}(\nabla_x (\phi_k-\chi_k) \cdot \langle u \rangle_{D_{\e,k}})\|_{L^r(E_{\e,k};\R^3)}\\
&\quad \leq C (\|\nabla_x\chi_k\|_{L^{3}(E_{\e,k};\R^3)}+ \|\nabla_x\phi_k\|_{L^{3}(E_{\e,k};\R^3)})\|u\|_{L^{r}(E_{\e,k};\R^3)}\leq C \|u\|_{L^{r}(D_{\e,k};\R^3)}.
\ea

Summarizing \eqref{est-bk}--\eqref{div-bk-u3}, using \eqref{est-B-gg} and the fact $\|\nabla_x\chi_k\|_{L^{3}(E_{\e,k};\R^3)}+ \|\nabla_x\phi_k\|_{L^{3}(E_{\e,k};\R^3)}\leq C$, we obtain
\be\label{R-u-est-f}
\| R_\e(u) \|_{L^{r}(\O_\e;\R^3)}^r \leq C (\| \ggg \|^r_{L^{r}(\O_\e;\R^3)}  + \| u \|^r_{L^{r}(\O;\R^3)} )\leq C \| \ggg \|^r_{L^{r}(\O_\e;\R^3)}.\nn
\ee
for some constant $C$ independent of $\e$ and $\|\Div\ggg\|_{L^q(\O_\e)}$. This implies our desired estimate \eqref{R-u-est}. Since this estimate constant $C$ is independent of $\|\Div\ggg\|_{L^q(\O)}$, by a density argument we can eliminate the assumption $\Div\ggg\in L^q(\O)$. The proof is completed.

\end{proof}


\section{Proof of Theorem \ref{Tm1}}

\subsection{Uniform bounds}

By the uniform bounds of the initial data in \eqref{ini-vr-vu} and the energy inequality (\ref{Mm4}), together with the pressure property (\ref{pp1}),  Korn's inequality and Poincar\'e inequality, we have
\be\label{bd-vr}
\{\tilde \vr_\e\}_{\e>0} \ \mbox{uniformly bounded in}\ L^\infty(0,T;L^\gamma(\Omega)),
\ee
\be\label{bd-vru}
\{\tilde \vr_\e|\tilde\vu_\e|^2\}_{\e>0} \ \mbox{uniformly bounded in}\ L^\infty(0,T;L^1(\Omega)),
\ee
\be\label{bd-vu}
\{\tilde\vu_\e\}_{\e>0} \ \mbox{uniformly bounded in}\ L^2(0,T;W_0^{1,2}(\Omega;\R^3)).
\ee
Then up to a substraction of subsequence, we derive the weak limit
\be\label{weak-limit0}
\tilde \vr_\e  \to \vr \ \mbox{weakly-* in} \ L^\infty(0,T;L^\gamma(\Omega)),\quad \tilde \vu_\e \to \vu \   \mbox{weakly in}\  L^2(0,T; W_{0}^{1,2}(\Omega);\R^3).\nn
\ee

\subsection{Improved integrability on the density}

In this subsection, we will show the following improved uniform integrability of the density:
\be\label{bd-vr2}
\{\tilde\vr_\e\}_{\e>0} \ \mbox{uniformly bounded in}\ L^{\frac{5\gamma}{3}-1}((0,T)\times\Omega).
\ee
This is equivalent to following integrability on the pressure:
\be\label{bd-vr-p}
 \{p(\tilde\vr_\e)\}_{\e>0} \ \mbox{uniformly bounded in}\ L^{\frac{5}{3}-\frac{1}{\gamma}}((0,T)\times \Omega).
\ee

We recall that, for any fixed $\e>0$, the result $\vr_\e \in L^{\frac{5\gamma}{3}-1}((0,T)\times\Omega)$ is known, see for instance Theorem 7.7 in \cite{N-book}. However, the estimate for $\| \vr_\e \|_{L^{\frac{5\gamma}{3}-1}((0,T)\times\Omega)}$ shown in \cite{N-book} is not uniformly bounded in $\e$. Indeed, it depends on the Lipschitz norm of the spatial domain, which is the perforated domain $\O_\e$ in our setting.  Our task in this section is to show such an estimate is uniform in $\e$.

The idea (as in \cite{FNP, N-book}) is to test the the momentum equation
by  functions of the form
\be\label{test-Be}
\varphi(t,x):=\psi(t)\mathcal{B}_\e\left(\vr_\e^{\theta}-\langle \vr_\e^\theta\rangle\right),\ \ \psi \in C_c^\infty((0,T)),\ \langle \vr_\e^\theta\rangle :=\frac{1}{|\Omega_\e|}\intOe {\vr_\e^{\theta}},\ \theta>0,
\ee
where $\mathcal{B}_\e$ is the Bogovski{\u\i} operator given in Proposition \ref{prop-Bog}. To prove (\ref{bd-vr2}), a nature choice is $\theta=2\gamma/3-1$ in (\ref{test-Be}), but due to the restriction ($3/2<q\leq 2$) on the uniform bound of $\mathcal{B}_\e$, there arise terms that cannot be controlled by the known uniform estimates in (\ref{bd-vr})--(\ref{bd-vu}). We will improve the integrability of $\vr_\e$ step by step by taking $\theta$ from smaller number $\gamma/2$ to our desired number $2\gamma/3-1$.

The following two lemmas are needed:
\begin{Lemma}\label{lem-ren1}
Under the assumption in Theorem 1.1, the extension functions  $\tilde\vr_\e,\tilde \vu_\e$  satisfy
 \be\label{eq-tvr1}
 \d_t \tilde \vr_\e+\Div(\tilde \vr_\e \tilde \vu_\e)=0,\quad \mbox{in}\ \mathcal{D}'((0,T)\times \R^3).
 \ee
\end{Lemma}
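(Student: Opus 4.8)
The plan is to upgrade the continuity equation, which holds in $(0,T)\times\Omega_\e$ in the sense of \eqref{Mm11}, to an identity on all of $(0,T)\times\R^3$ for the zero-extensions. The natural tool is the renormalized continuity equation: by Remark \ref{rem-exi}, for each fixed $\e>0$ the solution $(\vr_\e,\vu_\e)$ is a \emph{renormalized} weak solution, so \eqref{ren} holds in $\mathcal{D}'((0,T)\times\R^3)$ for the extensions after extending $(\vr_\e,\vu_\e)$ by zero outside $\Omega_\e$. The point I want to exploit is that the no-slip condition \eqref{i4}, $\vu_\e=0$ on $\partial\Omega_\e$, together with $\vu_\e\in L^2(0,T;W^{1,2}_0(\Omega_\e;\R^3))$, implies that the zero-extension $\tilde\vu_\e$ lies in $L^2(0,T;W^{1,2}(\R^3;\R^3))$ with no boundary defect measure; similarly $\tilde\vr_\e\in L^\infty(0,T;L^\gamma(\R^3))$ and is zero on the holes. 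Thus the flux $\tilde\vr_\e\tilde\vu_\e$ vanishes identically on the holes, and one expects the extended continuity equation to hold across $\partial T_{\e,k}$ and across $\partial\Omega$.

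First I would record that $\tilde\vu_\e\in L^2(0,T;W^{1,2}_0(\Omega;\R^3))$, which is the content of \eqref{bd-vu}; this already says that $\tilde\vu_\e$ has a genuine weak gradient on $\Omega$ (indeed on $\R^3$, being compactly supported in $\overline\Omega$) with $\Grad\tilde\vu_\e = \widetilde{\Grad\vu_\e}$, and in particular no singular part on $\partial\Omega_\e$. Next, I would take an arbitrary $\psi\in\DC((0,T)\times\R^3)$ and test \eqref{Mm11} with $\psi$ restricted to $\Omega_\e$. Since $\psi$ need not vanish near the holes, I would justify that the boundary terms produced on $\partial T_{\e,k}$ vanish: because $\vr_\e\vu_\e$ has zero normal trace on $\partial\Omega_\e$ in the weak sense (a consequence of $\vu_\e\in W^{1,2}_0(\Omega_\e)$, hence $\vr_\e\vu_\e\cdot\mathbf n=0$), integration by parts produces no surface contribution, and on $\partial\Omega$ the test function is already admissible because $\psi$ is compactly supported in $\Omega$ only after multiplying — more cleanly, I would first establish \eqref{eq-tvr1} on $(0,T)\times\Omega$ (using $\psi$ compactly supported in $\Omega$) and then observe that both $\tilde\vr_\e$ and $\tilde\vr_\e\tilde\vu_\e$ vanish in a neighbourhood of $\partial\Omega$ is false, so instead I extend across $\partial\Omega$ using that $\Omega$ is $C^2$ and the standard fact that a finite energy weak solution of the compressible system on a $C^2$ domain with no-slip condition has zero-extension satisfying the continuity equation on $\R^3$ (see \cite{F-book,N-book}).

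Concretely, the cleanest route is: (i) cite that for fixed $\e$, by \cite[Lemma]{F-book} or the argument in \cite{N-book}, the zero-extension of a finite energy weak solution satisfies the continuity equation in $\mathcal{D}'((0,T)\times\R^3)$ — this is exactly the assertion \eqref{eq-tvr1} and is classical once one knows $\vu_\e\in W^{1,2}_0(\Omega_\e)$; (ii) the only $\e$-dependence would be in constants, but the statement \eqref{eq-tvr1} is an identity, not an estimate, so no uniformity is needed here. The \textbf{main obstacle}, and the only genuinely non-trivial point, is verifying that the zero-extension introduces no boundary terms on $\bigcup_k\partial T_{\e,k}$ and on $\partial\Omega$: this rests on the weak-sense vanishing of the normal trace of $\vr_\e\vu_\e$, which in turn follows from $\vu_\e=0$ on $\partial\Omega_\e$ in the trace sense and $\vr_\e\in L^\infty(0,T;L^\gamma)$ with $\gamma>3/2$, so that $\vr_\e\vu_\e\in L^2(0,T;L^s)$ for some $s>1$ and a standard density/trace argument applies. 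I would spell this out by approximating a general test function $\varphi\in\DC((0,T)\times\R^3)$ by functions compactly supported in $\Omega_\e$ using the cut-offs $1-g_\e$ from Lemma \ref{g-exi} (or the $\phi_k$ of \eqref{cut-off-k}), showing the error terms $\int\tilde\vr_\e\tilde\vu_\e\cdot\Grad((1-g_\e)\varphi)$ and $\int\tilde\vr_\e\,\partial_t((1-g_\e)\varphi)$ tend to zero because $\tilde\vr_\e$ and $\tilde\vr_\e\tilde\vu_\e$ vanish on the holes and $\|g_\e-1\|_{W^{1,r}}\to0$; this recovers \eqref{Mm11} tested against arbitrary $\varphi$, i.e. \eqref{eq-tvr1}.
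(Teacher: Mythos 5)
Your overall strategy is the paper's: test the weak form \eqref{Mm11} with $\psi\,\chi$ for a cut-off $\chi$ compactly supported in $\Omega_\e$, and show that the commutator terms carrying $\nabla\chi$ and $1-\chi$ vanish in the limit because $\vu_\e$ has zero trace on $\partial\Omega_\e$. The genuine gap is in your choice of cut-off. Lemma \ref{lem-ren1} is an identity for each \emph{fixed} $\e$, so you need cut-offs indexed by an auxiliary parameter that exhausts $\Omega_\e$ for that fixed $\e$; the functions $g_\e$ of Lemma \ref{g-exi} (and the $\phi_k$ of \eqref{cut-off-k}) are indexed by $\e$ itself, and $\|g_\e-1\|_{W^{1,r}}\to 0$ only as $\e\to 0$. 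For fixed $\e$ the error terms $\int\tilde\vr_\e\tilde\vu_\e\cdot\nabla((1-g_\e)\varphi)\,\dx\,\dt$ are just fixed nonzero numbers, and there is nothing to send to zero; moreover $g_\e$ does not vanish near $\partial\Omega$, so it does not even localize away from the outer boundary. The paper instead takes $\phi_n\in C_c^\infty(\Omega_\e)$ with $\phi_n=1$ where ${\rm dist}(x,\partial\Omega_\e)\ge n^{-1}$, $\phi_n=0$ where ${\rm dist}(x,\partial\Omega_\e)\le (2n)^{-1}$, $|\nabla\phi_n|\le 4n$, and sends $n\to\infty$.

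The second missing ingredient is the estimate that makes the critical term $\int_0^T\int_{\Omega_\e}\vr_\e\vu_\e\cdot\psi\,\nabla\phi_n\,\dx\,\dt$ vanish. Your appeal to a ``zero normal trace of $\vr_\e\vu_\e$'' is essentially circular: a normal trace of an $L^p$ vector field is only defined once its distributional divergence is known to be a function or measure, which is precisely what the lemma establishes. What closes the argument is Hardy's inequality: $\vu_\e\in L^2(0,T;W_0^{1,2}(\Omega_\e;\R^3))$ gives ${\rm dist}(\cdot,\partial\Omega_\e)^{-1}\vu_\e\in L^2((0,T)\times\Omega_\e)$, while ${\rm dist}(\cdot,\partial\Omega_\e)\,\nabla\phi_n$ is uniformly bounded and tends to $0$ in every $L^q(\Omega_\e)$, $q<\infty$; combined with $\vr_\e\in L^\infty(0,T;L^\gamma(\Omega_\e))$, $\gamma>6$, this forces the flux term to zero. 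Your alternative route --- invoking Remark \ref{rem-exi} and taking $b(s)=s$ in \eqref{ren}, or citing \cite{F-book,N-book} --- does yield the identity, but only for solutions known a priori to be renormalized; Theorem \ref{Tm1} assumes only finite energy weak solutions, which is why the paper proves the extension property directly (Proposition \ref{prop-con}) and only then derives the renormalized equation from it via Lemma \ref{lem-ren2}.
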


\begin{Lemma}\label{lem-ren2}
Let $2\leq \beta<\infty$  and $\vr\in L^\beta(0,T;L^\beta_{loc}(\R^3))$, $\vr \geq 0$ a.e. in $(0,T)\times \R^3$, $\vu \in L^2(0,T;W^{1,2}_{loc}(\R^3;\R^3))$. Suppose that
 \be\label{eq-tvr2}
 \d_t \vr+\Div(\vr \vu)=0,\quad \mbox{in}\ \mathcal{D}'((0,T)\times \R^3).\nn
 \ee
Then for any $b\in C^0([0,\infty))\cap C^1((0,\infty))$ such that
\be\label{b-prop1}
 b'(s)\leq C s^{-\lambda_0} \ \mbox{for}\  s\in (0,1],\quad \ b'(s)\leq Cs^{\lambda_1} \ \mbox{for}\  s\in [1,\infty),\nn
 \ee
with
$
c>0,\ \lambda_0<1, \ \lambda_1 \leq \frac{\beta}{2}-1,
$
there holds the following renormalized equation
\be\label{reno-eq}
\d_t b(\vr)+\Div\big(b(\vr)\vu\big)+\big(b'(\vr)\vr-b(\vr)\big)\Div \vu=0\ \mbox{in}\ \mathcal{D}'\big((0,T)\times \R^3\big).\nn
\ee
\end{Lemma}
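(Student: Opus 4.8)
The plan is to carry out the classical DiPerna--Lions regularization argument, tailored to the present integrability frame $\vr\in L^\beta$, $\nabla\vu\in L^2$, in the spirit of \cite{LI4,F-book}. Since the assertion is local in the space variable, fix a ball $B\subset\R^3$, let $\omega_\e$ be a standard mollifier in $x$, and set $[\vr]_\e:=\vr*\omega_\e$. Convolving the continuity equation with $\omega_\e$ gives, on $(0,T)\times B$ for $\e$ small,
\[
\d_t[\vr]_\e+\Div([\vr]_\e\vu)=r_\e,\qquad r_\e:=\Div([\vr]_\e\vu)-\big[\Div(\vr\vu)\big]_\e .
\]
First I would invoke the Friedrichs commutator lemma: since $\vr\in L^\beta_{loc}$, $\nabla\vu\in L^2_{loc}$ and $\tfrac1\beta+\tfrac12\le 1$ (this is where $\beta\ge 2$ is used), one has $r_\e\to 0$ strongly in $L^m_{loc}((0,T)\times\R^3)$ with $\tfrac1m=\tfrac1\beta+\tfrac12$, in particular in $L^1_{loc}$.

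The core of the argument is performed first for a nonlinearity with globally bounded derivative. Given $b$ as in the statement, introduce the doubly truncated functions $b_{k,\delta}(s):=b_k(s+\delta)-b_k(\delta)$ for $\delta\in(0,1]$ and $k\in\N$, where $b_k$ agrees with $b$ on $[0,k]$ and is continued affinely with slope $b'(k)$ for $s>k$; then $b_{k,\delta}\in C^1([0,\infty))$ and $b_{k,\delta}'$ is bounded. Because $[\vr]_\e$ is smooth in $x$ and $\d_t[\vr]_\e$ is a locally integrable function, multiplying the regularized equation by $b_{k,\delta}'([\vr]_\e)$ and using the pointwise identity $b'(g)\Div(g\vu)=\Div(b(g)\vu)+(b'(g)g-b(g))\Div\vu$ produces
\[
\d_t b_{k,\delta}([\vr]_\e)+\Div(b_{k,\delta}([\vr]_\e)\vu)+\big(b_{k,\delta}'([\vr]_\e)[\vr]_\e-b_{k,\delta}([\vr]_\e)\big)\Div\vu=b_{k,\delta}'([\vr]_\e)\,r_\e
\]
in $\mathcal D'((0,T)\times B)$. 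Passing $\e\to 0$ is then routine: $[\vr]_\e\to\vr$ in $L^\beta_{loc}$ and a.e.\ along a subsequence, $b_{k,\delta}$ and $s\mapsto b_{k,\delta}'(s)s$ grow at most linearly, so the left-hand side converges by equi-integrability paired with $\vu,\Div\vu\in L^2_{loc}$, while $b_{k,\delta}'([\vr]_\e)r_\e\to 0$ in $L^1_{loc}$ since $b_{k,\delta}'$ is bounded and $r_\e\to 0$ in $L^1_{loc}$. This yields the renormalized identity for every $b_{k,\delta}$.

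It remains to remove the two truncations, and this is exactly where the growth hypotheses enter. Letting $\delta\to 0$ with $k$ fixed is controlled by $\lambda_0<1$: it provides the $\delta$-uniform bounds $|b_{k,\delta}(s)|\le C_k(1+s)$, $|b_{k,\delta}'(s)s|\le C_k(1+s)$ together with $|b_{k,\delta}'(s)s|\le C\,s^{1-\lambda_0}$ near $s=0$, so the nonlinearities $b_{k,\delta}(\vr)$ and $b_{k,\delta}'(\vr)\vr$ converge a.e.\ with equi-integrability; since adding a constant to a renormalization function leaves the renormalized equation unchanged, one obtains it for $b_k$. Letting then $k\to\infty$ is controlled by $\lambda_1\le\tfrac\beta2-1$: the $k$-uniform growth $|b_k(s)|+|b_k'(s)s|\le C(1+s^{\lambda_1+1})$ with $\lambda_1+1\le\tfrac\beta2$ keeps $b_k(\vr)$ and $b_k'(\vr)\vr-b_k(\vr)$ in a bounded subset of $L^{\beta/(\lambda_1+1)}_{loc}\hookrightarrow L^2_{loc}$; pairing these weak $L^2_{loc}$-limits against the strong $L^2_{loc}$-data $\vu$ and $\Div\vu$ carries all the products to the limit and gives the renormalized equation for $b$ itself.

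The step I expect to be the main obstacle is this final limit passage: the quadratic structure of $b([\vr]_\e)\vu$ and of $(b'([\vr]_\e)[\vr]_\e-b([\vr]_\e))\Div\vu$ forces $b(\vr)$ and $b'(\vr)\vr$ into $L^2_{loc}$, which is precisely why the restriction $\lambda_1\le\tfrac\beta2-1$ cannot be relaxed; dually, the singular behaviour of $b'$ at the origin --- and the corresponding absence of a usable bound on $b'([\vr]_\e)r_\e$ without truncation --- is exactly what the cut-off $b(s+\delta)-b(\delta)$ together with $\lambda_0<1$ is designed to neutralize. Everything else reduces to standard bookkeeping with mollifiers, a.e.\ convergence, and equi-integrability.
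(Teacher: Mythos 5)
Your argument is correct and is essentially the proof the paper relies on: the paper does not prove Lemma \ref{lem-ren2} itself but refers to Lemma 6.9 of Novotn\'y--Stra\v{s}kraba \cite{N-book}, whose proof is exactly this DiPerna--Lions regularization with the Friedrichs commutator lemma (using $\tfrac1\beta+\tfrac12\le1$), followed by removal of the truncations at $s=0$ via $\lambda_0<1$ and at infinity via $\lambda_1\le\tfrac\beta2-1$. Nothing further is needed.
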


The proof of Lemma \ref{lem-ren1} is the same as Proposition \ref{prop-con} and the proof is postponed. For the proof of Lemma \ref{lem-ren2}, we refer to Lemma 6.9 in \cite{N-book}

By the estimate (\ref{bd-vr}) with $\gamma>6$ and the estimate (\ref{bd-vu}), we apply the above two lemmas to obtain
\be\label{eq-vr-b}
\d_t b(\tilde \vr_\e)+\Div\big(b(\tilde \vr_\e)\tilde \vu_\e\big)+\big(b'(\tilde \vr_\e)\tilde \vr_\e - b(\tilde \vr_\e)\big)\Div \tilde \vu_\e=0\ \mbox{in}\ \mathcal{D}'\big((0,T)\times \R^3\big),
\ee
for any $b$ as in Lemma \ref{lem-ren2}; in particular, we can take $b(s)=s^\th$ with $0< \th \leq \frac{\gamma}{2}$.

We first use the function (\ref{test-Be}) with $\th:={\gamma}/{2}$ as a test function in (\ref{Mm2}). Thus, by uniform estimate \eqref{bd-vr} and Proposition \ref{prop-Bog}, we have for any $r\in (1,2]$ that
\be\label{B-vr-theta-1}
 \|\mathcal{B}_\e(\vr^\th- \langle \vr^\th \rangle)\|_{L^\infty(0,T;W_{0}^{1,r}(\Omega_\e;\R^3))}\leq C \| (\vr^\th- \langle \vr^\th \rangle)\|_{L^\infty(0,T;L^r(\Omega_\e))} \leq  C \| \vr\|_{L^\infty(0,T;L^\gamma(\Omega_\e))}^{\frac{\g}{2}}\leq C.
\ee

Then direct calculation gives
\begin{equation}\label{p-Ij}
\int_0^T \intOe{ \psi(t) p(\vr_\e)\vr_\e^\theta}\,\dt = \sum_{j=1}^6 I_j,\nn
\end{equation}
where
\ba\label{Ij}
&I_1:=\int_0^T \intOe{ \psi p(\vr_\e)\langle\vr_\e^\theta\rangle}\,\dt,\ I_2:=-\int_0^T \intOe{ \d_t\psi(t)\vr_\e \vu_\e \cdot \mathcal{B}_\e\big(\vr_\e^\theta-\langle\vr_\e^\theta\rangle \big)}\,\dt,\\
&I_3:=-\int_0^T \intOe{ \psi\vr_\e \vu_\e \otimes \vu_\e : \nabla_x \mathcal{B}_\e\big(\vr_\e^\theta-\langle\vr_\e^\theta\rangle \big)}\,\dt, \ I_4:=\int_0^T \intOe{ \psi{\mathbb S} (\Grad \vu_\e) : \nabla_x \mathcal{B}_\e\big(\vr_\e^\theta-\langle\vr_\e^\theta\rangle \big)}\,\dt, \\
&I_5:=-\int_0^T \intOe{ \psi\vr_\e\vu_\e \cdot \mathcal{B}_\e\big(\d_t\vr_\e^\theta-\d_t\langle\vr_\e^\theta\rangle \big)}\,\dt,\ I_6:= \int_0^T \intOe{\psi \vr_\e {\bf f} \cdot \mathcal{B}_\e\big(\vr_\e^\theta-\langle\vr_\e^\theta\rangle\big)}\,\dt.\nn
\ea


We estimate using the choice $\theta=\gamma/2$ and H\"older's inequality
\begin{align*}
\abs{I_1}\leq C \sup_{t\in [0,T]}\abs{\langle\vr_\e^\theta(t)\rangle}\intOe{ \abs{ p(\vr_\e(t)}}\leq C\norm{\vr_\e}_{L^\infty(0,T;L^\gamma(\Omega_\e))}^{3\gamma/2}\leq C.
\end{align*}

By the fact $\frac1\gamma+\frac16+\frac16\leq 1$, we can estimate with estimate \eqref{B-vr-theta-1} and H\"older's inequality
\begin{align*}
\abs{I_2}\leq C\norm{\vr_\e}_{L^\infty(0,T;L^\gamma(\Omega_\e))}\norm{\bfu_\e}_{L^2(0,T;L^6(\Omega_\e))}\norm{ \mathcal{B}_\e\left(\vr_\e^\theta-\langle\vr_\e^\theta\rangle \right)}_{L^\infty(0,T;L^6(\Omega_\e))}\leq C.
\end{align*}

Again by the fact $\gamma>6$ and $\frac1\gamma+\frac13+\frac12\leq 1$, we obtain with estimate \eqref{B-vr-theta-1} and H\"older's inequality
\begin{align*}
\abs{I_3}\leq C\norm{\vr_\e}_{L^\infty(0,T;L^\gamma(\Omega_\e))}\norm{\bfu_\e}_{L^2(0,T;L^6(\Omega_\e))}^2\norm{\Grad \mathcal{B}_\e\left(\vr_\e^\theta-\langle\vr_\e^\theta\rangle \right)}_{L^\infty(0,T;L^2(\Omega_\e))}\leq C.
\end{align*}

Similarly, for $I_4$, we have
\ba
|I_4| &\leq C\norm{\nabla_x\bfu_\e}_{L^2(0,T;L^2(\Omega_\e))} \norm{\Grad \mathcal{B}_\e\left(\vr_\e^\theta-\langle\vr_\e^\theta\rangle \right)}_{L^\infty(0,T;L^2(\Omega_\e))} \\
& \leq C \norm{\vr_\e^\theta}_{L^\infty(0,T;L^2(\Omega_\e))}\leq C\norm{\vr_\e}_{L^\infty(0,T;L^\g(\Omega_\e))}^\theta\leq C.\nn
\ea

The most challenging term is the term which involves the weak time derivative. To estimate this term we had to introduce the theory for Bogovski{\u\i} type operator on negative spaces for perforated domains, namely the second part of Proposition \ref{prop-Bog}, for which the proof needs the theory for Bogovski{\u\i} type operator on negative spaces for John domains, namely Theorem \ref{thm-Bog}.
First we may use Lemma \ref{lem-ren2} with $\beta=\gamma$ to obtain
\be\label{eq-rth3}
\d_t (\vr^{\theta})+\Div\big(\vr^\theta \vu\big)+\big((\theta-1)\vr^\theta\big)\Div \vu=0\ \mbox{in}\ \mathcal{D}'\big((0,T)\times \R^3\big), \quad \theta:=\frac{\gamma}{2}.\nn
\ee
Then, direct calculation gives
\ba\label{est-B-dt-vr-1}
I_5 &= \int_0^T \intOe{ \psi\vr_\e\vu_\e \cdot  \mathcal{B}_\e\left(\Div\big(\vr_\e^\theta \vu_\e \big) \right) }\,\dt \\
&\qquad + (\th-1)\int_0^T \intOe{ \psi\vr_\e\vu_\e \cdot  \mathcal{B}_\e\left(\vr_\e^\theta \Div \vu_\e - \langle \vr_\e^\theta \Div \vu_\e \rangle \right)  }\,\dt\\
&=: I_7+ I_8.
\ea

By estimates \eqref{bd-vr}--\eqref{bd-vu} and the fact $\gamma>6$, we have
\ba\label{bd-vrvu-1}
&\|\vr_\e \vu_\e \|_{L^2(0,T;L^3(\O_\e))} \leq \|\vr_\e \|_{L^\infty(0,T;L^6(\O_\e))}  \|\vu_\e \|_{L^2(0,T;L^6(\O_\e))}\leq C,\\
&\|\vr_\e \vu_\e \|_{L^\infty(0,T;L^\frac{12}{7}(\O_\e))} \leq \| \sqrt \vr_\e \|_{L^\infty(0,T;L^{12}(\O_\e))}  \|\sqrt \vr_\e \vu_\e \|_{L^\infty(0,T;L^2(\O_\e))}\leq C.
\ea
This implies, by interpolation, that
\ba\label{bd-vrvu-2}
\|\vr_\e \vu_\e \|_{L^6(0,T;L^2(\O_\e))} \leq \|\vr_\e \vu_\e \|_{L^2(0,T;L^3(\O_\e))}^{\frac{1}{3}} \|\vr_\e \vu_\e\|_{L^\infty(0,T;L^\frac{12}{7}(\O_\e))}^{\frac 23}\leq C.
\ea

We apply Proposition \ref{prop-Bog} to get
\ba\label{est-I7}
|I_7| &\leq C \|\vr_\e \vu_\e \|_{L^6(0,T;L^2(\Omega_\e))}\norm{ \mathcal{B}_\e\left(\Div\big(\vr_\e^\theta \vu_\e \big) \right)}_{L^\frac{6}{5}(0,T;L^2(\Omega_\e))}\leq C \|\vr_\e^\theta \vu_\e \|_{L^\frac{6}{5}(0,T;L^2(\Omega_\e))}\\
& \leq C \norm{ \vu_\e}_{L^2(0,T;L^6(\Omega_\e))} \norm{\vr_\e^\theta }_{L^3(0,T;L^3(\Omega_\e))} \leq C \norm{ \nabla_x \vu_\e}_{L^2(0,T;L^2(\Omega_\e))} \norm{\vr_\e^\theta }_{L^3(0,T;L^3(\Omega_\e))} .
\ea

For $I_8$, by Proposition \ref{prop-Bog} and Sobolev embedding, together with \eqref{bd-vrvu-1} and \eqref{bd-vrvu-2}, we have
\ba\label{est-I8-1}
|I_8| &\leq C \norm{\vr_\e\vu_\e}_{L^6(0,T;L^2(\Omega_\e))} \norm{\mathcal{B}_\e\left(\vr_\e^\theta \Div \vu_\e - \langle \vr_\e^\theta \Div \vu_\e \rangle \right)}_{L^\frac{6}{5}(0,T;L^2(\Omega_\e))} \\
&\leq  C  \norm{\mathcal{B}_\e\left(\vr_\e^\theta \Div \vu_\e - \langle \vr_\e^\theta \Div \vu_\e \rangle \right)}_{L^\frac{6}{5}(0,T;W_0^{1,\frac{6}{5}}(\Omega_\e))}\\
&\leq C \norm{\vr_\e^\theta \Div \vu_\e}_{L^\frac{6}{5}(0,T;L^{\frac{6}{5}}(\Omega_\e))}\leq C   \norm{ \Div \vu_\e}_{L^2(0,T;L^2(\Omega_\e))} \norm{\vr_\e^\theta }_{L^3(0,T;L^3(\Omega_\e))}.
\ea

Thus, by \eqref{est-I7} and \eqref{est-I8-1}, we obtain
\ba\label{est-I8-2}
|I_5| \leq |I_7|+ |I_8| \leq C \norm{\vr_\e^\theta }_{L^3(0,T;L^3(\Omega_\e))} = C \norm{\vr_\e }_{L^{\frac{3\g}{2}}(0,T;L^{\frac{3\g}{2}}(\Omega_\e))}^{\frac{\g}{2}}.\nn
\ea

Finally, for $I_6$, it is direct to obtain
$$
|I_6|\leq \|\vr_\e \|_{L^\infty(0,T;L^2(\O_\e))} \|\vr_\e^{\th} \|_{L^\infty(0,T;L^2(\O_\e))} \leq C.
$$

Hence, summing up above estimates for $I_j, \ 1\leq j \leq 8,$ by passing $\psi \to 1$ in $L^\infty((0,T))$, we have
$$
\int_0^T \intOe{ p(\vr_\e)\vr_\e^{\frac{\g}{2}}}\,\dt \leq C + C \norm{\vr_\e }_{L^{\frac{3\g}{2}}(0,T;L^{\frac{3\g}{2}}(\Omega_\e))}^{\frac{\g}{2}}.
$$
Thus, by assumption on pressure in \eqref{pp1}, we obtain
$$
\norm{\vr_\e }_{L^{\frac{3\g}{2}}(0,T;L^{\frac{3\g}{2}}(\Omega_\e))}^{\frac{3\g}{2}} \leq C + C \int_0^T \intOe{ p(\vr_\e)\vr_\e^{\frac{\g}{2}}}\,\dt\leq C + C \norm{\vr_\e }_{L^{\frac{3\g}{2}}(0,T;L^{\frac{3\g}{2}}(\Omega_\e))}^{\frac{\g}{2}}.
$$
This implies
\ba\label{bd-vr3-1}
&\{\tilde\vr_\e\}_{\e>0} \ \mbox{uniformly bounded in}\ L^{\frac{3\gamma}{2}}((0,T)\times \Omega).
\ea

\medskip

Next, based on estimates obtained above in \eqref{bd-vr3-1}, we choose a bigger value $\th=\frac{2\g}{3}-1$ in \eqref{test-Be} to obtained our desired estimates \eqref{bd-vr2}--\eqref{bd-vr-p}. To this end, we estimate all $I_j$ with $\th:=\frac{2\g}{3}-1$.

For $I_1$,  estimate \eqref{bd-vr} and H\"older's inequality implies
$$
|I_1| \leq C \sup_{t\in [0,T]}\abs{\langle\vr_\e^\theta(t)\rangle}\intOe{ \abs{ p(\vr_\e(t)}}\leq C\norm{\vr_\e}_{L^\infty(0,T;L^\gamma(\Omega_\e))}^{\frac{5\g}{3}-1}\leq C.
$$

For $I_2$, by Proposition \ref{prop-Bog}, estimates \eqref{bd-vr}, \eqref{bd-vu}, \eqref{bd-vr3-1} and Sobolev embedding, we have
\ba
|I_2| \leq &  C \norm{\vr_\e}_{L^\infty(0,T;L^\gamma(\Omega_\e))}\norm{\bfu_\e}_{L^2(0,T;L^6(\Omega_\e))}\norm{ \mathcal{B}_\e\left(\vr_\e^\theta-\langle\vr_\e^\theta\rangle \right)}_{L^\infty(0,T;L^3(\Omega_\e))} \\
 \leq & C\norm{ \mathcal{B}_\e\left(\vr_\e^\theta-\langle\vr_\e^\theta\rangle \right)}_{L^\infty(0,T;W^{1,\frac{3}{2}}_0(\Omega_\e))}
 \leq  C \norm{ \vr_\e^\theta}_{L^\infty(0,T;L^\frac{3}{2}(\Omega_\e))}\leq C \norm{ \vr_\e}_{L^\infty(0,T;L^\g(\Omega_\e))}^{\frac{2\g}{3}-1}.\nn
\ea

For $I_3$,  by the fact $\gamma>6$, $\th=\frac{2\g}{3}-1$ and $\frac1\gamma+\frac13 + \frac{\th}{\g} = 1$, we obtain by using Proposition \ref{prop-Bog} and H\"older's inequality
\begin{align*}
\abs{I_3} &\leq C \norm{\vr_\e}_{L^\infty(0,T;L^\gamma(\Omega_\e))}\norm{\bfu_\e}_{L^2(0,T;L^6(\Omega_\e))}^2\norm{\Grad \mathcal{B}_\e\left(\vr_\e^\theta-\langle\vr_\e^\theta\rangle \right)}_{L^\infty(0,T;L^\frac{\g}{\th}(\Omega_\e))}\\
& \leq C \norm{\vr_\e^\theta }_{L^\infty(0,T;L^\frac{\g}{\th}(\Omega_\e))} \leq C \norm{\vr_\e }_{L^\infty(0,T;L^\g(\Omega_\e))}^{\th}.
\end{align*}

For $I_4$, by \eqref{bd-vu} and \eqref{bd-vr3-1}, and the fact $2\th < \frac{3\g}{2}$ for $\th = \frac{2\g}{3}-1$, we obtain
\ba
|I_4| &\leq C\norm{\nabla_x\bfu_\e}_{L^2(0,T;L^2(\Omega_\e))} \norm{\Grad \mathcal{B}_\e\left(\vr_\e^\theta-\langle\vr_\e^\theta\rangle \right)}_{L^2(0,T;L^2(\Omega_\e))} \\
& \leq C \norm{\vr_\e^\theta}_{L^2(0,T;L^2(\Omega_\e))} \leq C \norm{\vr_\e}_{L^{2\th}(0,T;L^{2\th}(\Omega_\e))}^\th \leq C.\nn
\ea

For $I_6$, it is direct to obtain
$$
|I_6|\leq \|\vr_\e \|_{L^\infty(0,T;L^2(\O_\e))} \|\vr_\e^{\th} \|_{L^2(0,T;L^2(\O_\e))} \leq C.
$$

\medskip

Now we estimate $I_5$ which is the most difficult one. By estimates in \eqref{bd-vr3-1}, we use Lemma \ref{lem-ren2} with $\beta=3\gamma/2$ to obtain
\be\label{eq-rth2}
\d_t (\vr^{\theta})+\Div\big(\vr^\theta \vu\big)+\big((\theta-1)\vr^\theta\big)\Div \vu=0\ \mbox{in}\ \mathcal{D}'\big((0,T)\times \R^3\big), \quad \theta:=\frac{2\gamma}{3}-1.\nn
\ee

We then split $I_5 = I_7 + I_8$ the same way as \eqref{est-B-dt-vr-1}:
\ba\label{def-I7-I8}
&I_7:= \int_0^T \intOe{ \psi\vr_\e\vu_\e \cdot  \mathcal{B}_\e\left(\Div\big(\vr_\e^\theta \vu_\e \big) \right) }\,\dt, \\
&I_8:= (\th-1)\int_0^T \intOe{ \psi\vr_\e\vu_\e \cdot  \mathcal{B}_\e\left(\vr_\e^\theta \Div \vu_\e - \langle \vr_\e^\theta \Div \vu_\e \rangle \right)  }\,\dt.\nn
\ea

We first estimate $\vr_\e \vu_\e$. By the fact
\ba
\left(2\left(\frac{5\g}{3}-1\right)\right)^{-1} + \frac{1}{2} = \left(\frac{10\g-6}{3}\right)^{-1} + \frac 12 = \frac{5\g}{10\g-6}, \nn
\ea
and H\"older's inequality, we have
\ba\label{est-I8-new-3}
&\left\|\vr_\e \vu_\e\right\|_{L^\frac{10\g-6}{3}\left(0,T; L^\frac{10\g-6}{5\g}(\O_\e)\right)} = \left\|\sqrt \vr_\e \sqrt\vr_\e\vu_\e\right\|_{L^\frac{10\g-6}{3}\left(0,T; L^\frac{10\g-6}{5\g}(\O_\e)\right)} \\
& \leq   \left\|\sqrt \vr_\e \right\|_{L^\frac{10\g-6}{3}\left(0,T; L^\frac{10\g-6}{3}(\O_\e)\right)} \left\| \sqrt\vr_\e\vu_\e\right\|_{L^\infty\left(0,T; L^2(\O_\e)\right)} \leq C \left\| \vr_\e \right\|_{L^\frac{5\g-3}{3}\left(0,T; L^\frac{5\g-3}{3}(\O_\e)\right)}^{\frac{1}{2}}.
 \ea
Similarly, by the fact
\ba
 \left(\frac{5\g}{3}-1\right)^{-1} + \frac{1}{2} = \frac{5\g+3}{10\g-6}, \quad  \left(\frac{5\g}{3}-1\right)^{-1} + \frac{1}{6} = \frac{5\g+15}{6(5\g-3)}, \nn
\ea
and H\"older's inequality, we have
\ba\label{est-I8-new-4}
&\left\|\vr_\e \vu_\e\right\|_{L^\frac{10\g-6}{5\g + 3}\left(0,T; L^\frac{6(5\g-3)}{5\g+15}(\O_\e)\right)}  \leq   \left\| \vr_\e \right\|_{L^\frac{5\g-3}{3}\left(0,T; L^\frac{5\g-3}{3}(\O_\e)\right)} \left\| \vu_\e\right\|_{L^2\left(0,T; L^6(\O_\e)\right)}\\
& \qquad \leq C \left\| \vr_\e \right\|_{L^\frac{5\g-3}{3}\left(0,T; L^\frac{5\g-3}{3}(\O_\e)\right)}.
 \ea
By \eqref{est-I8-new-3}, \eqref{est-I8-new-4}, and interpolations between Lebesgue spaces, we have, for any $\a\in [0,1]$ and $r_1, \ r_2$ such that
\be\label{def-r12-alpha}
\frac{1}{r_1} = (1-\a) \frac{3}{10\g-6} + \a  \frac{5\g + 3}{10\g-6}, \quad \frac{1}{r_2} = (1-\a) \frac{5\g}{10\g-6} + \a  \frac{5\g+15}{6(5\g-3)},
\ee
there holds
\ba\label{est-I8-new-5}
\left\|\vr_\e \vu_\e\right\|_{L^{r_1}\left(0,T; L^{r_2}(\O_\e)\right)}  \leq  C  \left\| \vr_\e \right\|_{L^\frac{5\g-3}{3}\left(0,T; L^\frac{5\g-3}{3}(\O_\e)\right)}^{\frac{(1+\a)}{2}}.\nn
 \ea
By choosing $\a=\frac{1}{5}$ in \eqref{def-r12-alpha}, we have
\be\label{r12-alpha=1/5}
\frac{1}{r_1} = \frac{\g + 3}{10\g-6}, \quad \frac{1}{r_2} = \frac{13\g+3}{6(5\g-3)}.\nn
\ee
This implies
\ba\label{est-I8-new-6}
\left\|\vr_\e \vu_\e\right\|_{L^{\frac{10\g-6}{\g + 3}}\left(0,T; L^{\frac{6(5\g-3)}{13\g+3}}(\O_\e)\right)}  \leq  C  \left\| \vr_\e \right\|_{L^\frac{5\g-3}{3}\left(0,T; L^\frac{5\g-3}{3}(\O_\e)\right)}^{\frac{3}{5}}.
 \ea

For $I_7$, by \eqref{est-I8-new-6} and the fact
$$
\frac{1}{2} + \frac{2\g-3}{5\g-3} = \frac{9\g-9}{10\g-6}, \quad \frac{1}{6} + \frac{2\g-3}{5\g-3} = \frac{17\g-21}{6(5\g-3)} <\frac{2}{3}
$$
and
$$
\frac{\g + 3}{10\g-6} + \frac{9\g-9}{10\g-6} =1, \quad \frac{13\g+3}{6(5\g-3)}+ \frac{17\g-21}{6(5\g-3)}=1,
$$
we can apply Proposition \ref{prop-Bog} to obtain
\ba\label{est-I7-new}
|I_7| &\leq  C \left\|\vr_\e \vu_\e\right\|_{L^{\frac{10\g-6}{\g + 3}}\left(0,T; L^{\frac{6(5\g-3)}{13\g+3}}(\O_\e)\right)}  \left\|\mathcal{B}_\e\left(\Div\big(\vr_\e^\theta \vu_\e \big) \right)\right\|_{L^\frac{10\g-6}{9\g-9}\left(0,T; L^\frac{6(5\g-3)}{17\g-21}(\O_\e)\right)}\\
&\leq C \left\|\vr_\e \vu_\e\right\|_{L^{\frac{10\g-6}{\g + 3}}\left(0,T; L^{\frac{6(5\g-3)}{13\g+3}}(\O_\e)\right)}  \left\|\vr_\e^\theta \vu_\e  \right\|_{L^\frac{10\g-6}{9\g-9}\left(0,T; L^\frac{6(5\g-3)}{17\g-21}(\O_\e)\right)}\\
&\leq  C  \left\| \vr_\e \right\|_{L^\frac{5\g-3}{3}\left(0,T; L^\frac{5\g-3}{3}(\O_\e)\right)}^{\frac{3}{5}} \left\|\vr_\e^\theta   \right\|_{L^\frac{5\g-3}{2\g-3}\left(0,T; L^\frac{5\g-3}{2\g-3}(\O_\e)\right)}\left\|\vu_\e   \right\|_{L^2\left(0,T; L^6(\O_\e)\right)} \\
&\leq  C  \left\| \vr_\e \right\|_{L^\frac{5\g-3}{3}\left(0,T; L^\frac{5\g-3}{3}(\O_\e)\right)}^{\frac{3}{5}+\th} \leq C \|\vr_\e\|_{L^\frac{5\g-3}{3}((0,T)\times \O_\e)}^{\frac{2\g}{3}-\frac{2}{5}}.\nn
\ea

It is left to estimate $I_8$. Since $\th=\frac{2\g}{3}-1$ and
$$
\frac{1}{2} + \frac{2\g-3}{5\g-3} = \frac{9\g-9}{10\g-6},
$$
H\"older's inequality implies
\ba\label{est-I8-new-1}
\left\|\vr_\e^\th \Div \vu_\e\right\|_{L^\frac{10\g-6}{9\g-9}((0,T)\times \O_\e)} \leq \left\|\Div \vu_\e\right\|_{L^2((0,T)\times \O_\e)}\|\vr_\e^\th\|_{L^\frac{5\g-3}{2\g-3}((0,T)\times \O_\e)} \leq C \|\vr_\e\|_{L^\frac{5\g-3}{3}((0,T)\times \O_\e)}^{\th}.\nn
\ea
Thus, by Proposition \ref{prop-Bog} and Sobolev embedding, we have
\ba\label{est-I8-new-2}
&\left\|\mathcal{B}_\e\left(\vr_\e^\th \Div \vu_\e- \langle \vr_\e^\theta \Div \vu_\e \rangle \right)\right\|_{L^\frac{10\g-6}{9\g-9}\left(0,T; L^\frac{6(5\g-3)}{17\g-21}(\O_\e)\right)}  \\
&\qquad \leq C \left \|\mathcal{B}_\e\left(\vr_\e^\th \Div \vu_\e - \langle \vr_\e^\theta \Div \vu_\e \rangle \right)\right\|_{L^\frac{10\g-6}{9\g-9}\left(0,T;W^{1,\frac{10\g-6}{9\g-9}}_0(\O_\e)\right)}\\
&\qquad \leq C \left\|\vr_\e^\th \Div \vu_\e\right\|_{L^\frac{10\g-6}{9\g-9}((0,T)\times \O_\e)} \leq C \|\vr_\e\|_{L^\frac{5\g-3}{3}((0,T)\times \O_\e)}^{\th}.
\ea
By estimates \eqref{est-I8-new-2} and \eqref{est-I8-new-6}, we obtain
\ba
|I_8| &\leq C \left\|\mathcal{B}_\e\left(\vr_\e^\th \Div \vu_\e- \langle \vr_\e^\theta \Div \vu_\e \rangle \right)\right\|_{L^\frac{10\g-6}{9\g-9}\left(0,T; L^\frac{6(5\g-3)}{17\g-21}(\O_\e)\right)} \left\|\vr_\e \vu_\e\right\|_{L^{\frac{10\g-6}{\g + 3}}\left(0,T; L^{\frac{6(5\g-3)}{13\g+3}}(\O_\e)\right)}\\
&\leq C \|\vr_\e\|_{L^\frac{5\g-3}{3}((0,T)\times \O_\e)}^{\th}  \|\vr_\e\|_{L^\frac{5\g-3}{3}((0,T)\times \O_\e)}^{\frac{3}{5}} \leq C \|\vr_\e\|_{L^\frac{5\g-3}{3}((0,T)\times \O_\e)}^{\frac{2\g}{3}-\frac{2}{5}}.\nn
\ea

Hence, summing up above estimates for $I_j, \ 1\leq j \leq 8,$ by passing $\psi \to 1$ in $L^\infty((0,T))$, we have
$$
\int_0^T \intOe{ p(\vr_\e)\vr_\e^{\frac{2\g}{3}-1}}\,\dt \leq C + C \|\vr_\e\|_{L^\frac{5\g-3}{3}((0,T)\times \O_\e)}^{\frac{2\g}{3}-\frac{2}{5}}.
$$
Thus, by assumption on pressure in \eqref{pp1}, we have
$$
\|\vr_\e\|_{L^\frac{5\g-3}{3}((0,T)\times \O_\e)}^{\frac{5\g-3}{3}} \leq C + C \int_0^T \intOe{ p(\vr_\e)\vr_\e^{\frac{2\g}{3}-1}}\,\dt \leq C + C \|\vr_\e\|_{L^\frac{5\g-3}{3}((0,T)\times \O_\e)}^{\frac{2\g}{3}-\frac{2}{5}}.
$$
This implies our desired uniform estimates in \eqref{bd-vr2} and \eqref{bd-vr-p}.

\subsection{Equations in homogeneous domains}

In this section, we derive the equations in $\tilde\vr_\e,\tilde\vu_\e$ in $((0,T)\times \Omega)$.

\subsubsection{Continuity equation}
For the continuity equation, we have the following Proposition:
\begin{Proposition}\label{prop-con}
Under the assumption in Theorem 1.1, the extension functions  $\tilde\vr_\e,\tilde \vu_\e$  satisfy
 \be\label{eq-tvr}
 \d_t \tilde \vr_\e+\Div(\tilde\vr_\e\tilde \vu_\e)=0,\quad \mbox{in}\ \mathcal{D}'((0,T)\times \R^3).
 \ee
\end{Proposition}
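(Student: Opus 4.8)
The plan is to deduce \eqref{eq-tvr} from the weak continuity equation inside $\Omega_\e$ together with the no--slip condition, which is what guarantees that no mass flux crosses $\partial\Omega_\e$. First I would localise \eqref{Mm11} in time (apply it with $\tau=T$ and with test functions having compact support in $t$) to obtain
\[
\int_0^T\intOe{\big(\vr_\e\,\partial_t\varphi+\vr_\e\vu_\e\cdot\Grad\varphi\big)}\,\dt=0
\qquad\text{for every }\varphi\in\DC((0,T)\times\Omega_\e).
\]
Since $\vr_\e\in L^\infty(0,T;L^\gamma(\Omega_\e))$ with $\gamma>2$ and $\vu_\e\in L^2(0,T;W_0^{1,2}(\Omega_\e;\R^3))\hookrightarrow L^2(0,T;L^6)$, H\"older's inequality gives $\vr_\e\vu_\e\in L^\infty(0,T;L^s(\Omega_\e;\R^3))$ for some $s>1$, so the zero extensions $\tilde\vr_\e$ and $\tilde\vr_\e\tilde\vu_\e=\widetilde{\vr_\e\vu_\e}$ are locally integrable on $\R^3$ and every integral below is meaningful.

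Next I would fix an arbitrary $\varphi\in\DC((0,T)\times\R^3)$, write $d(x):=\mathrm{dist}(x,\partial\Omega_\e)$, and choose a cut--off family $\chi_\eta\in C^\infty_c(\Omega_\e)$ with $0\le\chi_\eta\le1$, $\chi_\eta\equiv1$ on $\{d>2\eta\}$, $\chi_\eta\equiv0$ near $\partial\Omega_\e$, and $|\Grad\chi_\eta|\le C\eta^{-1}$ supported in $N_\eta:=\{d<2\eta\}$. Then $\chi_\eta\varphi\in\DC((0,T)\times\Omega_\e)$ is an admissible test function, and inserting it into the identity above (using $\partial_t(\chi_\eta\varphi)=\chi_\eta\partial_t\varphi$ and $\Grad(\chi_\eta\varphi)=\chi_\eta\Grad\varphi+\varphi\,\Grad\chi_\eta$) yields
\[
\int_0^T\intOe{\chi_\eta\big(\vr_\e\,\partial_t\varphi+\vr_\e\vu_\e\cdot\Grad\varphi\big)}\,\dt
=-\int_0^T\intOe{\varphi\,\vr_\e\vu_\e\cdot\Grad\chi_\eta}\,\dt .
\]
Letting $\eta\to0$, dominated convergence ($\chi_\eta\to1$ a.e.\ and boundedly, with an integrable majorant furnished by the previous step) turns the left--hand side into $\int_0^T\int_{\R^3}\big(\tilde\vr_\e\,\partial_t\varphi+\tilde\vr_\e\tilde\vu_\e\cdot\Grad\varphi\big)\,\dxdt$, which is exactly the quantity we must show vanishes; hence it only remains to prove that the right--hand side tends to $0$.

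This last point is the main obstacle, and is precisely where the no--slip hypothesis is used. Using $|\Grad\chi_\eta|\le C\eta^{-1}$ on $N_\eta$ and $\gamma>2$ I would bound
\[
\Big|\int_0^T\intOe{\varphi\,\vr_\e\vu_\e\cdot\Grad\chi_\eta}\,\dt\Big|
\le \frac{C\|\varphi\|_{L^\infty}}{\eta}\,\|\vr_\e\|_{L^\infty(0,T;L^2)}\int_0^T\|\vu_\e(t)\|_{L^2(N_\eta)}\,\dt ,
\]
and then invoke Hardy's inequality $\int_{\Omega_\e}|\vu_\e/d|^2\,\dx\le C\|\Grad\vu_\e\|_{L^2(\Omega_\e)}^2$, which is valid because, for fixed $\e$, $\Omega_\e$ is a bounded Lipschitz ($C^2$) domain and $\vu_\e(t,\cdot)\in W_0^{1,2}$; this gives $\|\vu_\e(t)\|_{L^2(N_\eta)}\le 2\eta\big(\int_{N_\eta}|\vu_\e(t)/d|^2\,\dx\big)^{1/2}$. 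Hence the displayed quantity is $\le C\|\varphi\|_{L^\infty}\|\vr_\e\|_{L^\infty L^2}\int_0^T\big(\int_{N_\eta}|\vu_\e/d|^2\,\dx\big)^{1/2}\,\dt$, which tends to $0$ as $\eta\to0$ by Cauchy--Schwarz in $t$ and absolute continuity of the integral, since $\int_0^T\int_{\Omega_\e}|\vu_\e/d|^2\,\dxdt<\infty$. This proves \eqref{eq-tvr}, and the proof of Lemma \ref{lem-ren1} is identical. Alternatively, \eqref{eq-tvr} can be read off from the renormalized identity \eqref{ren} of Remark \ref{rem-exi} by letting $b$ approximate the identity, but the cut--off argument above is self--contained and imposes no growth restriction on $b$.
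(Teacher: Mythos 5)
Your argument is correct and is essentially the same as the paper's: the paper also tests with $\psi\phi_n$ for a cut-off $\phi_n$ vanishing near $\partial\Omega_\e$ with $|\nabla_x\phi_n|\lesssim n$, and kills the commutator term $\int\vr_\e\vu_\e\cdot\psi\nabla_x\phi_n$ using exactly the Hardy-inequality fact that $\mathrm{dist}(x,\partial\Omega_\e)^{-1}\vu_\e\in L^2((0,T)\times\Omega_\e)$ for $\vu_\e\in L^2(0,T;W_0^{1,2})$. The only cosmetic difference is that you make the absolute-continuity step and the $L^2(N_\eta)$ bookkeeping explicit, which the paper leaves implicit.
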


\begin{proof}[Proof of Proposition \ref{prop-con}]  It is sufficient to prove
 \be\label{eq-tvr-wk}
 \int_0^T \intO{ \tilde \vr_\e \d_t \psi+\tilde\vr_\e\tilde \vu_\e\cdot \nabla_x \psi}\dt=0 ,\quad \mbox{for any}\ \psi\in C_c^\infty((0,T)\times \R^3).\nn
 \ee
Let $\{\phi_n\}_{n\geq 1} \subset C_c^\infty(\Omega_\e)$ such that $0\leq \phi_n\leq 1$,  $|\nabla_x \phi_n|\leq 4n$ and
\ba\label{def-phin}
\phi_n=1 \ \mbox{on}\  \{x\ |\ {\rm dist}(x,\d\Omega_\e)\geq n^{-1}\},\quad \phi_n=0 \ \mbox{on}\  \left\{x\ |\ {\rm dist}(x,\d\Omega_\e)\leq (2n)^{-1}\right\}.\nn
\ea
Then
\ba\label{vr-n}
\int_0^T \intRt{ \tilde \vr_\e \d_t \psi+\tilde\vr_\e\tilde \vu_\e\cdot \nabla_x \psi}\dt=\int_0^T \intOe{  \vr_\e \d_t (\psi\phi_n)+\vr_\e \vu_\e\cdot \nabla_x (\psi\phi_n)}\dt\\
+\int_0^T \intOe{  \vr_\e \d_t \psi(1-\phi_n)+\vr_\e  \vu_\e\cdot \nabla_x \psi(1-\phi_n)-\vr_\e \vu_\e\cdot  \psi \nabla_x\phi_n}\dt.
\ea

By the estimates obtained in (\ref{bd-vr})-(\ref{bd-vu}) and Sobolev embedding, we have
$$
\lim_{n\to \infty}\int_0^T \intOe{  \vr_\e \d_t \psi(1-\phi_n)+\vr_\e  \vu_\e\cdot \nabla_x \psi(1-\phi_n)}\dt=0,
$$
where we used the fact
$$
1-\phi_n \to 0\ \mbox{in}\  L^q (\Omega_\e),\ \mbox{for any} \ 1<q<\infty,\ \mbox{as} \ n\to \infty.
$$
By virtue of (\ref{bd-vu}), we have
$$
{\rm dist}(x,\d\Omega_\e)^{-1} \vu_\e \in L^2((0,T)\times \Omega_\e).
$$
Then by (\ref{bd-vr}) and the fact
$$
{\rm dist}(x,\d\Omega_\e) \nabla_x \phi_n \to 0\ \mbox{in}\  L^q (\Omega_\e),\ \mbox{for any} \ 1<q<\infty,\ \mbox{as} \ n\to \infty£¬
$$
we have
$$
\lim_{n\to \infty}\int_0^T \intOe{  \vr_\e  \vu_\e\cdot  \psi\nabla_x \phi_n}\dt=0.
$$
We complete the proof by passing $n\to \infty$ in (\ref{vr-n})

\end{proof}

\subsubsection{Momentum equation}

By Remark \ref{rem-tech}, the condition (\ref{ass-tec}) implies that there exits $5/2<q<3$ such that \eqref{ass-tec1}, \eqref{ass-tec2} and \eqref{ass-tec3} are all satisfied. We now prove the following proposition
\begin{Proposition}\label{prop-mom}
Under the assumption in Theorem 1.1, there holds the equation
 \be\label{eq-tvu}
 \d_t (\tilde \vr_\e\tilde\vu_\e)+\Div(\tilde\vr_\e\tilde \vu_\e\otimes\tilde \vu_\e)+\nabla_x p(\tilde\vr_\e)=\Div{\mathbb S}(\Grad \tilde\vu_\e)+ \tilde \vr_\e \ff + F_\e,\quad \mbox{in}\ \mathcal{D}'((0,T)\times \Omega;\R^3),
 \ee
where $F_\e\in \mathcal{D}'((0,T)\times \Omega;\R^3)$ satisfies
\be\label{Fe}
|\langle F_\e,\varphi \rangle|\leq C  \e^{\sigma} \left(|\d_t \varphi|_{L^2(0,T;L^2(\Omega;\R^3))}+|\nabla_x\varphi|_{L^r(0,T;L^3(\Omega;\R^9))}+|\varphi|_{L^r0,T;L^r(\Omega;\R^3))}\right),
\ee
for any $\varphi \in C^\infty_c((0,T)\times \Omega;\R^3)$ and some constant $C$, some $1<r<\infty$, with $\sigma:=(3-q)\alpha-3>0$. Here $q\in (5/2,3)$ satisfying  \eqref{ass-tec1}, \eqref{ass-tec2} and \eqref{ass-tec3}, $C$ and $r$ are independent of $\e$.

\end{Proposition}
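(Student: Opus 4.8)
The plan is to transfer the weak momentum balance \eqref{Mm2} from the perforated domain $\Omega_\e$ to $\Omega$ by localising test functions away from the holes. First I would fix once and for all an exponent $q\in(5/2,3)$ for which \eqref{ass-tec1}, \eqref{ass-tec2} and \eqref{ass-tec3} all hold; such $q$ exists precisely under \eqref{ass-tec}, as recorded in Remark \ref{rem-tech}. Let $\{g_\e\}_{\e>0}$ be the family provided by Lemma \ref{g-exi} with $r=q$: thus $g_\e\in C^\infty(\R^3)$, $g_\e\equiv 0$ on $\bigcup_{k\in K_\e}B(x_{\e,k},\de_0\e^\alpha)$ (which contains $\bigcup_k T_{\e,k}$), $\nabla_x g_\e$ is supported in $\bigcup_k(B(x_{\e,k},\de_0\e^\alpha)\setminus T_{\e,k})$, a set of Lebesgue measure at most $C\e^{3\alpha-3}$, and $\|g_\e-1\|_{W^{1,q}(\Omega)}\le C\e^{\sigma}$ with $\sigma>0$; in particular $\|g_\e-1\|_{L^s(\Omega)}$ tends to $0$ with a positive power of $\e$ for every $s<\infty$. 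Given $\varphi\in C^\infty_c((0,T)\times\Omega;\R^3)$, the product $g_\e\varphi$ is smooth, vanishes in a neighbourhood of every hole, and is compactly supported in $(0,T)\times\Omega_\e$; hence it is admissible in \eqref{Mm2}, and since $\varphi$ is compactly supported in time all the $\tau$-boundary terms in \eqref{Mm2} drop out.

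The next step is to insert $g_\e\varphi$ into \eqref{Mm2} and expand by the Leibniz rule, $\partial_t(g_\e\varphi)=g_\e\partial_t\varphi$, $\Grad(g_\e\varphi)=g_\e\Grad\varphi+\nabla_x g_\e\otimes\varphi$, $\Div(g_\e\varphi)=g_\e\Div\varphi+\varphi\cdot\nabla_x g_\e$. Writing $g_\e=1-(1-g_\e)$ in the terms carrying a single factor $g_\e$, and using that the zero extensions vanish on the holes (so that $\int_{\Omega_\e}=\int_\Omega$ throughout and the zero extension of ${\mathbb S}(\Grad\vu_\e)$ equals ${\mathbb S}(\Grad\tilde\vu_\e)$ by linearity), the "$g_\e\equiv 1$" part of the resulting identity is exactly the weak formulation of \eqref{eq-tvu} with $F_\e=0$. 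The remaining contributions then define $\langle F_\e,\varphi\rangle$, and they split into two kinds: (i) terms $\int_0^T\int_\Omega(1-g_\e)\,(\cdots)\,\dxdt$, where $(\cdots)$ is the standard momentum integrand $\tilde\vr_\e\tilde\vu_\e\cdot\partial_t\varphi+\tilde\vr_\e\tilde\vu_\e\otimes\tilde\vu_\e:\Grad\varphi+p(\tilde\vr_\e)\Div\varphi-{\mathbb S}(\Grad\tilde\vu_\e):\Grad\varphi+\tilde\vr_\e\ff\cdot\varphi$; and (ii) the three terms $\int_0^T\int_\Omega[\tilde\vr_\e\tilde\vu_\e\otimes\tilde\vu_\e:(\varphi\otimes\nabla_x g_\e)+p(\tilde\vr_\e)\,\varphi\cdot\nabla_x g_\e-{\mathbb S}(\Grad\tilde\vu_\e):(\varphi\otimes\nabla_x g_\e)]\,\dxdt$ carrying the factor $\nabla_x g_\e$.

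It then remains to bound every term in (i) and (ii) by a positive power of $\e$ times one of the norms of $\varphi$ occurring on the right of \eqref{Fe}. This is carried out by Hölder's inequality, using the uniform estimates \eqref{uniform-est} and \eqref{p-est}, the energy bound \eqref{bd-vru}, the Sobolev embedding $W_0^{1,2}(\Omega)\hookrightarrow L^6(\Omega)$ (which makes $\tilde\vu_\e$ uniformly bounded in $L^2(0,T;L^6)$ and, since $\gamma>6$, $\tilde\vr_\e\tilde\vu_\e$ uniformly bounded in $L^2(0,T;L^3)$), together with the smallness of $\|g_\e-1\|_{W^{1,q}(\Omega)}$, $\|g_\e-1\|_{L^s(\Omega)}$ and $\|\nabla_x g_\e\|_{L^q(\Omega)}$. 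All the terms in (i), the viscous term ${\mathbb S}(\Grad\tilde\vu_\e):(\varphi\otimes\nabla_x g_\e)$ in (ii) (for which one only needs $q>2$), and the force term are routine. The interlocking of exponents — rather than any genuine analytic difficulty — is the real obstacle, and it is concentrated in the pressure and convective terms of (ii): in $\int_0^T\int_\Omega p(\tilde\vr_\e)\,\varphi\cdot\nabla_x g_\e\,\dxdt$ one pairs $p(\tilde\vr_\e)\in L^{5/3-1/\gamma}$ with $\nabla_x g_\e\in L^q$, which forces $(5/3-1/\gamma)^{-1}+1/q\le 1$, i.e.\ \eqref{ass-tec2}; in $\int_0^T\int_\Omega\tilde\vr_\e\tilde\vu_\e\otimes\tilde\vu_\e:(\varphi\otimes\nabla_x g_\e)\,\dxdt$ one pairs $\tilde\vr_\e\in L^\infty(0,T;L^\gamma)$, $\tilde\vu_\e\in L^2(0,T;L^6)$ and $\nabla_x g_\e\in L^q$, which forces $1/\gamma+1/3+1/q\le 1$, i.e.\ \eqref{ass-tec3}; and $\|\nabla_x g_\e\|_{L^q(\Omega)}\le C\e^\sigma$ with $\sigma>0$ only when $(3-q)\alpha-3>0$, i.e.\ \eqref{ass-tec1}. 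The simultaneous solvability of \eqref{ass-tec1}–\eqref{ass-tec3} with $q\in(5/2,3)$ is exactly \eqref{ass-tec}, which is why that hypothesis — equivalently, the existence of such a $q$ — is assumed; once $q$ is fixed, assembling the estimates delivers \eqref{Fe} with $\sigma$ the decay rate furnished by Lemma \ref{g-exi}.
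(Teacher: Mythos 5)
Your proposal is correct and follows essentially the same route as the paper: test \eqref{Mm2} with $g_\e\varphi$ for the cut-off family of Lemma \ref{g-exi}, split off the $(1-g_\e)$- and $\nabla_x g_\e$-contributions as $F_\e$, and close the H\"older/Sobolev bookkeeping exactly where you locate it (the pressure and convective terms against $\nabla_x g_\e$, giving \eqref{ass-tec2} and \eqref{ass-tec3}, with \eqref{ass-tec1} controlling $\|\nabla_x g_\e\|_{L^q}$). The only detail worth adding is that the paper upgrades \eqref{ass-tec2}--\eqref{ass-tec3} to strict inequalities (its \eqref{ass-tec4}) and interpolates $\tilde\vr_\e|\tilde\vu_\e|^2$ between $L^\infty L^1$ and $L^1L^{3\gamma/(3+\gamma)}$ so that all exponents $r_j$ on $\varphi$ in \eqref{Fe} come out finite; you cite the needed endpoint \eqref{bd-vru} but should make that interpolation explicit.
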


\begin{proof}[Proof of Proposition \ref{prop-mom}] Let $\varphi\in C_c^\infty((0,T)\times \Omega;\R^3)$ be any test function. It is sufficient to show:
\ba
I^\e:&=\int_{0}^T  \intO{\tilde \vr_\e\tilde \vu_\e \d_t \varphi+\tilde\vr_\e\tilde \vu_\e\otimes\tilde \vu_\e:\nabla_x \varphi+ p(\tilde\vr_\e)\Div \varphi-{\mathbb S}(\Grad \tilde\vu_\e):\nabla_x \varphi + \tvr{\bf f} \cdot \varphi}\dt\\
 &\leq C \ \e^{\sigma} \left(|\d_t \varphi|_{L^2(0,T;L^2(\Omega;\R^3))}+|\nabla_x\varphi|_{L^r(0,T;L^3(\Omega;\R^9))}+|\varphi|_{L^r(0,T;L^r(\Omega;\R^3))}\right),\ \mbox{for some $r<\infty$}.\nn
\ea

 The condition (\ref{ass-tec1}) makes sure that we can apply Lemma \ref{g-exi} to find $\{g_\e\}_{\e>0}\subset W_0^{1,q}(\Omega)$ such that (\ref{g-fam}) and (\ref{g-est}) are satisfied; the conditions (\ref{ass-tec2}) as well as (\ref{ass-tec3}) make sure all the terms appeared in the following equation make sense:
\ba\label{Ie}
I^\e& = \int_{0}^T  \intOe{ \vr_\e\vu_\e \d_t (g_\e \varphi)+\vr_\e \vu_\e\otimes \vu_\e:\nabla_x (g_\e\varphi) + p(\vr_\e)\Div (g_\e\varphi)\\
&\qquad -{\mathbb S}(\Grad \vu_\e):\nabla_x (g_\e\varphi) + \vr_\e \ff \cdot (g_\e \varphi)}\dt +\sum_{j=1}^5 I_j= \sum_{j=1}^5 I_j,\nn
\ea
where
\ba\label{Ij-1-5}
I_1&:=\int_{0}^T  \intO{ \tilde\vr_\e\tilde\vu_\e (1-g_\e) \d_t \varphi}\dt,\\
I_2&:=\int_{0}^T  \intO{ \tilde \vr_\e \tilde \vu_\e\otimes \tilde \vu_\e:(1-g_\e)\nabla_x \varphi -\tilde \vr_\e \tilde \vu_\e\otimes \tilde \vu_\e:(\nabla_x g_\e\otimes  \varphi) }\dt\\
I_3&:=\int_{0}^T  \intO{  p(\tilde \vr_\e)(1-g_\e)\Div\varphi -p(\tilde \vr_\e)\nabla_x g_\e\cdot \varphi}\dt,\\
I_4&:=\int_{0}^T  \intO{ {\mathbb S}(\Grad \tilde \vu_\e):(1-g_\e)\nabla_x \varphi +{\mathbb S}(\Grad \tilde \vu_\e):(\nabla_x g_\e\otimes  \varphi) }\dt,\\
I_5&:=\int_{0}^T  \intO{ \tilde\vr_\e\tilde\ff (1-g_\e)  \varphi}\dt.\nn
\ea

We now estimate $I_j$ one by one. By (\ref{g-est}) in Lemma \ref{g-exi} and Sobolev embedding, we have
\be\label{g-est2}
\|g_\e-1\|_{L^{q^*}}\leq C \ \e^\sigma,\quad \frac{1}{q^*}=\frac{1}{q}-\frac{1}{3}.
\ee

We further observe that the condition (\ref{ass-tec}) is sufficient to make the inequality (\ref{ass-tec2}) and (\ref{ass-tec3}) be strict. Precisely, under condition (\ref{ass-tec}), there exists $5/2<q<3$ such that (\ref{ass-tec1}) and the following two inequalities are satisfied:
\be\label{ass-tec4}
\left(\frac{5}{3}-\frac{1}{\gamma}\right)^{-1}+\frac{1}{q} < 1,\quad \frac{1}{\gamma}+\frac{1}{3}+\frac{1}{q}< 1.
\ee

In the rest of this proof, we use a simpler notation $L^{q}L^r$ to denote spaces $L^q(0,T; L^r(\O))$ or $L^q(0,T; L^r(\O;\R^3))$ or  $L^q(0,T; L^r(\O;\R^9))$.

\medskip

We now consider $I_1$. By (\ref{g-est2}) and (\ref{ass-tec4}), there holds
\be\label{ass-tec5}
\frac{1}{\gamma}+\frac{1}{6}+\frac{1}{q^*}< \frac{1}{2}.
\ee

 By the uniform estimates in (\ref{bd-vr})-(\ref{bd-vr2}) and H\"older's inequality,  we obtain
\[
I_1 \leq   \|\tilde\vr_\e\|_{L^\infty L^\gamma}\|\tilde u_\e\|_{L^2L^6} \|1-g_\e\|_{L^{q^*}}\|\d_t\varphi\|_{L^2 L^2} \leq C\ \e^{\sigma} \|\d_t\varphi\|_{L^2 L^2}.
\]

\medskip

For $I_2$,  by (\ref{g-est2}) and (\ref{ass-tec4}), using Sobolev embedding and H\"older inequality, we have
\be\label{bd-vru1}
\{\tilde \vr_\e |\tilde\vu_\e|^2\}_{\e>0} \ \mbox{uniformly bounded in}\ L^\infty L^1 \cap L^1L^{\frac{3\gamma}{3+\gamma}}.\nn
\ee
By interpolation and the inequalities in (\ref{ass-tec4}) and (\ref{ass-tec5}), we have
\be\label{bd-vru2}
\{\tilde \vr_\e |\tilde\vu_\e|^2\}_{\e>0} \ \mbox{uniformly bounded in}\ L^{q_1} L^{q_2}\nn
\ee
for some $1<q_1<\infty$ and $1<q_2<3\gamma/(3+\gamma)$ satisfying
\be\label{ass-tec6}
\frac{1}{q_2}+\frac{1}{q}< 1,\quad\frac{1}{q_2}+\frac{1}{q^*}< \frac{2}{3}.\nn
\ee
Then H\"older's inequality implies
\[
I_2\leq C \|\tilde\vr_\e |\tilde\vu_\e|^2\|_{ L^{q_1} L^{q_2}}  \|1-g_\e\|_{L^{q^*}}\|\nabla_x\varphi\|_{L^{r_1} L^{3}} + C \|\tilde\vr_\e |\tilde\vu_\e|^2\|_{ L^{q_1} L^{q_2}} \|1-g_\e\|_{W^{1,q}}\|\varphi\|_{L^{ r_1} L^{r_2}}
\]
\[
\leq C \e^{\sigma}(\|\nabla_x\varphi\|_{L^{r_1} L^{3}}+\|\varphi\|_{L^{ r_1} L^{ r_2}}),
\]
where
$$
\frac{1}{ r_1}:=1-\frac{1}{q_1}>0,\quad \frac{1}{ r_2}:=1-\frac{1}{q_2}-\frac{1}{q}>0.
$$

\medskip

For $I_3$, by (\ref{bd-vr2}), similar argument as the estimate for $I_2$ gives
\ba
I_3& \leq C   \|p(\tilde\vr_\e)\|_{ L^{\frac{5}{3}-\frac{1}{\gamma}} L^{\frac{5}{3}-\frac{1}{\gamma}}}  \|1-g_\e\|_{L^{q^*}}\|\nabla_x\varphi\|_{L^{r_3} L^{r_4}} + C  \|p(\tilde\vr_\e)\|_{ L^{\frac{5}{3}-\frac{1}{\gamma}} L^{\frac{5}{3}-\frac{1}{\gamma}}} \|1-g_\e\|_{W^{1,q}}\|\varphi\|_{L^{r_5} L^{\tilde r_6}}\\
&\leq C \e^{\sigma}(\|\nabla_x\varphi\|_{L^{ r_3} L^{r_4}}+\|\varphi\|_{L^{r_5} L^{r_6}}),\nn
\ea
where
\[
\frac{1}{r_3}:=1-\left(\frac{5}{3}-\frac{1}{\gamma}\right)^{-1}>\frac{1}{3}, \quad \frac{1}{r_4}:=1-\left(\frac{5}{3}-\frac{1}{\gamma}\right)^{-1}-\frac{1}{q^*}>\frac{1}{3},
\]
\[
\frac{1}{r_5}:=1-\left(\frac{5}{3}-\frac{1}{\gamma}\right)^{-1}>\frac{1}{3}, \quad \frac{1}{r_6}:=1-\left(\frac{5}{3}-\frac{1}{\gamma}\right)^{-1}-\frac{1}{q}>0.
\]

\medskip

Similarly, we have for $I_4$, we have
\[
I_4 \leq C \e^{\sigma}(\|\nabla_x\varphi\|_{L^{2} L^{r_7}}+\|\varphi\|_{L^{2} L^{r_8}}),\quad
\frac{1}{r_7}:=1-\frac{1}{2}-\frac{1}{q^*}>\frac{1}{3}+\frac{1}{10}, \ \frac{1}{r_8}:=1-\frac{1}{2}-\frac{1}{q}>\frac{1}{10}.
\]

Finally for $I_5:$
\[
I_5 \leq C \e^{\sigma} \|\varphi\|_{L^{2} L^{r_9}},\quad
\frac{1}{r_9}:=1-\frac{1}{q^*}- \frac{1}{\g}> 1-\frac{2}{5}-\frac{1}{6}>\frac{2}{5}.
\]

We thus complete the proof by taking
$$
r:=\max\{r_j\ |\ 1\leq j\leq 9\} \in (1,\infty).
$$

\end{proof}

\subsection{Passing to the limit}\label{sec:limitpass}

We need to show the weak convergence of the nonlinear terms in the sense of contribution:
\ba\label{weak-lim-non}
\tilde \vr_\e \tilde \vu_\e\to \vr\vu,\quad
\tilde \vr_\e \tilde \vu_\e\otimes \tilde \vu_\e \to \vr\vu\otimes\vu,\quad
p(\tilde\vr_\e)\to p(\vr).\nn
\ea

\subsubsection{Time derivative estimates and weak limits}

We would like to prove
\[
\tilde \vr_\e \tilde \vu_\e\to \vr\vu,\quad
\tilde \vr_\e \tilde \vu_\e\otimes \tilde \vu_\e \to \vr\vu\otimes\vu
\]
in the sense of distribution. A key point is that we can obtain some uniform estimate for $\d_t\tilde \vr$ and $\d_t(\tilde \vr\tilde\vu)$ by Proposition \ref{prop-con} and Proposition \ref{prop-mom} we just proved. This allows us to use the Aubin-Lions type argument to prove the weak limit of the product is the product of the weak limits.

\medskip

By Proposition \ref{prop-con} and the uniform estimates (\ref{bd-vr})-(\ref{bd-vu}), we have
\be\label{dt-vr}
\{\d_t \tilde \vr_\e\}_{\e>0}\ \mbox{uniformly bounded in}\ L^2(0,T;W^{-1,\frac{6\gamma}{6+\gamma}(\Omega)}).
\ee
By Aubin-Lions type argument (see for example \cite{Simon}), or directly using Lemma 5.1 in \cite{LI4}, we have
\be\label{wl-pd1}
\tilde \vr_\e \tilde \vu_\e\to \vr\vu \quad \mbox{in}\quad \mathcal{D}'((0,T)\times \Omega).
\ee

By \eqref{bd-vr} and \eqref{bd-vru}, there holds
\be\label{uniform-est-vrvu}
\{(\tilde \vr_\e \tilde\vu_\e)\}_{\e>0}\ \mbox{uniformly bounded in} \ L^\infty(0,T;L^\frac{2\gamma}{1+\gamma}(\Omega;\R^3)),\nn
\ee
together with \eqref{dt-vr},  we have
\be\label{wl-pd1-2}
  \tilde \vr_\e \to \vr \quad \mbox{in}\quad C_w([0,T];L^\g(\Omega;\R^3)),\quad    \tilde \vr_\e \tilde \vu_\e\to \vr\vu \quad \mbox{in}\quad C_w([0,T];L^\frac{2\gamma}{1+\gamma}(\Omega;\R^3)).\nn
\ee

\medskip

By Proposition \ref{prop-mom} and the uniform estimates (\ref{bd-vr})-(\ref{bd-vu}), have
\be\label{dt-vu1}
 \tilde \vr_\e \tilde\vu_\e=(\tilde \vr_\e \tilde\vu_\e)^{(1)}+\e^\sigma (\tilde \vr_\e \tilde\vu_\e)^{(2)},
\ee
where
\be\label{dt-vu2}
\{\d_t (\tilde \vr_\e \tilde\vu_\e)^{(1)}\}_{\e>0}\ \mbox{uniformly bounded in}\ L^{1}(0,T;W^{-1,{1}}(\Omega;\R^3)),
\ee
\be\label{dt-vu3}
\{(\tilde \vr_\e \tilde\vu_\e)^{(2)}\}_{\e>0}\ \mbox{uniformly bounded in} \ L^2((0,T)\times \Omega;\R^3).
\ee
By observing
$$
\{(\tilde \vr_\e \tilde\vu_\e)\}_{\e>0}\ \mbox{uniformly bounded in} \ L^2(0,T;L^\frac{6\gamma}{6+\gamma}(\Omega;\R^3))\subset  \ L^2((0,T)\times \Omega;\R^3),
$$
we have
\be\label{dt-vu4}
\{(\tilde \vr_\e \tilde\vu_\e)^{(1)}\}_{\e>0}\ \mbox{uniformly bounded in} \ L^2((0,T)\times \Omega;\R^3).
\ee
By (\ref{dt-vu2}) and (\ref{dt-vu4}), together with Aubin--Lions type argument (or using directly Lemma 5.1 in \cite{LI4}), we have
\be\label{wl-pd2}
(\tilde \vr_\e \tilde\vu_\e)^{(1)} \otimes\tilde\vu_\e \to \overline{(\tilde \vr_\e \tilde\vu_\e)^{(1)}}\otimes \vu \quad \mbox{in}\quad \mathcal{D}'((0,T)\times \Omega).\nonumber
\ee
By (\ref{wl-pd1}), (\ref{dt-vu1}), (\ref{dt-vu3}), the weak limit satisfies
\[
\overline{(\tilde \vr_\e \tilde\vu_\e)^{(1)}} = \overline{(\tilde \vr_\e \tilde\vu_\e)}=\vr\vu.
\]
Finally, by observing
$$
\e^\sigma \|(\tilde \vr_\e \tilde\vu_\e)^{(2)}\otimes \vu_\e\|_{L^1((0,T)\times \Omega;\R^3\times \R^3)} \leq C \e^\sigma \to 0,\ \mbox{ as }\ \e \ \to\  0,
$$
we obtain
\be\label{wl-pd3}
\tilde \vr_\e \tilde\vu_\e \otimes\tilde\vu_\e \to  \vr \vu \otimes \vu \quad \mbox{in}\quad \mathcal{D}'((0,T)\times \Omega).
\ee

Thus, by passing $\e \to 0$ in \eqref{eq-tvr} and \eqref{eq-tvu}, we obtain the equations in $(\vr, \vu)$:
 \be\label{eq-vr-lim1}
 \d_t \vr+\Div(\vr \vu)=0,\quad \mbox{in}\ \mathcal{D}'((0,T)\times \R^3),
 \ee
\be\label{eq-vu-lim1}
 \d_t (\vr \vu)+\Div( \vr \vu \otimes  \vu )+\nabla_x \overline{p(\vr)}=\Div{\mathbb S}(\Grad \vu)+ \vr \ff, \quad \mbox{in}\ \mathcal{D}'((0,T)\times \Omega;\R^3).
 \ee
Here in \eqref{eq-vu-lim1}, $\overline{p(\vr)}$ denotes the weak convergence of $p(\tilde \vr_\e)$. Moreover, by Lemma \ref{lem-ren2}, there holds
\be\label{reno-vr-lim2}
\d_t b(\vr)+\Div\big(b(\vr)\vu\big)+\big(b'(\vr)\vr-b(\vr)\big)\Div \vu=0\ \mbox{in}\ \mathcal{D}'\big((0,T)\times \R^3\big),
\ee
for any $b$ fulfilling properties stated in Lemma \ref{lem-ren2}.

\subsubsection{Strong convergence of the density}

The next step is to show
\be\label{weak-limit-pressure}
\overline{p(\vr)} = p(\vr),
\ee
which is a consequence of the strong convergence $\tilde \vr_\e \to \vr$ a.e. in $(0,T)\times \O$.

As in the existence theory of weak solutions for compressible Navier--Stokes equations, the strong convergence for the density approximate solution family is a tricky part. P.-L. Lions \cite{LI4} introduced the so-called effective viscous flux which enjoys an additional compactness, and this property plays a crucial role in the existence theory of weak solutions for the compressible Navier-Stokes equations. This is the following lemma:

\begin{Lemma}\label{lem:flux}
Up to a substraction of subsequence, there holds for any $\psi\in C_c^\infty(\Omega)$ and any $\phi \in C_c^\infty((0,T))$:
\ba\label{flux}
\lim_{\e\to 0}\int_0^T\intO{\phi(t)\psi(x) \left(p(\tilde\vr_\e)-(\frac{4\mu}{3}+\eta)\Div \tilde\vu_\e\right)\tilde\vr_\e}\,\dt\\
=\int_0^T\intO{\phi(t)\psi(x) \left(\overline{p(\vr)}-(\frac{4\mu}{3}+\eta)\Div  \vu\right)\vr}\,\dt.\nn
\ea

\end{Lemma}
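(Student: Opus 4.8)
The plan is to run the classical ``effective viscous flux'' argument of Lions \cite{LI4} and Feireisl--Novotn\'y--Petzeltov\'a on the extended pair $(\tilde\vr_\e,\tilde\vu_\e)$, using equations \eqref{eq-tvr} and \eqref{eq-tvu} posed on the homogeneous cylinder $(0,T)\times\O$; the only genuinely new point will be that \eqref{eq-tvu} carries the extra term $F_\e$, which turns out to be negligible thanks to \eqref{Fe} and the standing assumption $\g>6$. Introduce the Riesz-type operators on $\R^3$
\[
\mathcal{A}=(\mathcal{A}_i)_{i=1}^3,\ \ \mathcal{A}_i:=\d_i\Delta^{-1},\qquad \mathcal{R}=(\mathcal{R}_{ij})_{i,j=1}^3,\ \ \mathcal{R}_{ij}:=\d_i\d_j\Delta^{-1},
\]
(Fourier multipliers $\xi_i/|\xi|^2$ and $\xi_i\xi_j/|\xi|^2$ up to constants), so that $\Div\mathcal{A}[h]=h$, the map $h\mapsto\mathcal{A}[h]$ gains one spatial derivative on every $L^s$, $1<s<\infty$ (hence sends a compactly supported $L^\g$-function with $\g>3$ into $W^{1,\g}(\R^3)\cap L^\infty(\R^3)$, and is compact into $C^0_{loc}$), and $\mathcal{R}$ is a Calder\'on--Zygmund operator on every $L^s$ with $\mathcal{R}_{ij}=\mathcal{R}_{ji}$ and $\sum_i\mathcal{R}_{ii}[h]=h$. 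Fix $\psi\in C_c^\infty(\O)$, $\phi\in C_c^\infty((0,T))$, and a cut-off $\tilde\psi\in C_c^\infty(\O)$ with $\tilde\psi\equiv1$ on $\operatorname{supp}\psi$.

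The first step is to test \eqref{eq-tvu} against
\[
\varphi_\e:=\phi(t)\,\tilde\psi(x)\,\mathcal{A}\big[\psi\,\tilde\vr_\e\big]
\]
and, symmetrically, to test the limit momentum balance \eqref{eq-vu-lim1} against $\varphi:=\phi\,\tilde\psi\,\mathcal{A}[\psi\vr]$. Since neither is smooth nor compactly supported in $t$, one first mollifies $\tilde\vr_\e$ (resp.\ $\vr$) in $(t,x)$---legitimate because $(\tilde\vr_\e,\tilde\vu_\e)$ and $(\vr,\vu)$ are renormalized weak solutions on $\R^3$, see \eqref{ren}, \eqref{reno-vr-lim2} and Proposition~\ref{prop-con}---and then passes to the limit in the regularisation using the uniform bounds of Sections~3.1--3.2; this part is identical to the unperforated case. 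Using $\Div\mathcal{A}[\psi\tilde\vr_\e]=\psi\tilde\vr_\e$, the pressure contribution of the $\e$-identity is $\intTO{\phi\,\psi^2\,p(\tilde\vr_\e)\,\tilde\vr_\e}$ plus a remainder carrying $\nabla\tilde\psi\cdot\mathcal{A}[\psi\tilde\vr_\e]$; an integration by parts using $\mathcal{R}_{ij}=\mathcal{R}_{ji}$ and $\sum_i\mathcal{R}_{ii}={\rm Id}$ turns $\mathbb{S}(\Grad\tilde\vu_\e):\Grad\varphi_\e$ into $\big(\tfrac{4\mu}{3}+\eta\big)\intTO{\phi\,\psi^2\,\Div\tilde\vu_\e\,\tilde\vr_\e}$ plus a commutator of the form $\int\phi\,\tilde\psi\,\tilde\vu_\e\cdot\big(\tilde\vr_\e\,\mathcal{R}[\,\cdot\,]-\mathcal{R}[\,\cdot\,]\big)$ plus $\nabla\tilde\psi$-remainders; the remaining terms are the convective term $\int\phi\,\tilde\vr_\e\tilde\vu_\e\otimes\tilde\vu_\e:\Grad\varphi_\e$, the time term $\int\tilde\vr_\e\tilde\vu_\e\cdot\d_t\varphi_\e$, the force term, and $\langle F_\e,\varphi_\e\rangle$. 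The limit identity has the same structure with $p(\tilde\vr_\e),\tilde\vr_\e,\tilde\vu_\e$ replaced by $\overline{p(\vr)},\vr,\vu$ and leading terms $\intTO{\phi\psi^2\overline{p(\vr)}\vr}$ and $\big(\tfrac{4\mu}{3}+\eta\big)\intTO{\phi\psi^2\Div\vu\,\vr}$. Subtracting, \eqref{flux} reduces to showing that each non-leading term of the $\e$-identity converges to its counterpart in the limit identity.

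There are three kinds of terms. (i) $\langle F_\e,\varphi_\e\rangle\to0$: by \eqref{bd-vr}, $\mathcal{A}[\psi\tilde\vr_\e]$ is bounded in $L^\infty(0,T;W^{1,\g}(\R^3;\R^3))$, hence (as $\g>6>3$) in $L^\infty L^\infty$; thus $\varphi_\e$ is bounded in $L^\infty L^\infty$, $\Grad\varphi_\e$ in $L^\infty L^\g\subset L^\infty L^3$, and, by \eqref{dt-vr}, $\d_t\varphi_\e$ in $L^2(0,T;L^{6\g/(6+\g)})\subset L^2L^2$; so $|\langle F_\e,\varphi_\e\rangle|\le C\e^\sigma\to0$ by \eqref{Fe}. (ii) Every non-leading term not involving $\Grad\mathcal{A}[\psi\tilde\vr_\e]$ (the $\nabla\tilde\psi$-remainders, the force term, the $\d_t\phi$-part of the time term) is the integral of a product of a weakly convergent factor---$\tilde\vr_\e\to\vr$ weakly-$*$ in $L^\infty L^\g$, $\tilde\vr_\e\tilde\vu_\e\to\vr\vu$ by \eqref{wl-pd1}, $\tilde\vr_\e\tilde\vu_\e\otimes\tilde\vu_\e\to\vr\vu\otimes\vu$ by \eqref{wl-pd3} (which, by the improved bounds of Section~3.2, is a weak limit in some reflexive $L^r$, $r>1$)---against the strongly convergent factor $\mathcal{A}[\psi\tilde\vr_\e]\to\mathcal{A}[\psi\vr]$ in $C([0,T];C^0(K))$ for each compact $K$, the latter following from $\tilde\vr_\e\to\vr$ in $C_w([0,T];L^\g)$ (see \eqref{wl-pd1-2}) and the compactness of $\mathcal{A}$; these products pass to the limit termwise. (iii) The delicate terms---the convective term, the time term, and the viscous commutator---are treated together: rewriting $\d_t\mathcal{A}[\psi\tilde\vr_\e]=-\mathcal{A}\big[\Div(\psi\tilde\vr_\e\tilde\vu_\e)\big]+\mathcal{A}\big[\nabla\psi\cdot\tilde\vr_\e\tilde\vu_\e\big]$ by the continuity equation \eqref{eq-tvr}, their sum has exactly the Div--Curl/commutator structure at the heart of the effective-flux argument, and the commutator lemma for Riesz operators (weak continuity of $v_\e\mathcal{R}_{ij}[w_\e]-w_\e\mathcal{R}_{ij}[v_\e]$ under weak convergence of $v_\e,w_\e$ with complementary integrability; see \cite{LI4,FNP})---applied with $\tilde\vr_\e\to\vr$, $\tilde\vr_\e\tilde\vu_\e\to\vr\vu$ and the integrability from Sections~3.1--3.2, which is comfortable because $\g>6$---shows this sum converges to the same expression built from $(\vr,\vu)$, now using the limit continuity equation \eqref{eq-vr-lim1}. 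Collecting (i)--(iii), every non-leading term cancels against its limit, leaving precisely \eqref{flux}.

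The main obstacle is (iii): the compensated-compactness/commutator identity for the convective term, which is the classical core of the effective-viscous-flux lemma. It is no harder here than in the unperforated case, because the extended pair $(\tilde\vr_\e,\tilde\vu_\e)$ satisfies the continuity equation on all of $\R^3$ (Proposition~\ref{prop-con}) and the uniform estimates of Sections~3.1--3.2 are exactly what the commutator lemma requires; the one new quantity, $\langle F_\e,\varphi_\e\rangle$, is harmless by \eqref{Fe} once one checks---using $\g>6$---that $\varphi_\e$ is bounded in $L^\infty L^\infty$ with $\d_t\varphi_\e$ bounded in $L^2L^2$.
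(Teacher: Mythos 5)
Your proposal follows essentially the same route as the paper: test the $\e$-momentum equation \eqref{eq-tvu} and the limit equation \eqref{eq-vu-lim1} with $\nabla\Delta^{-1}$ of the (localized) densities, control $\langle F_\e,\cdot\rangle$ via \eqref{Fe} using that $\g>6$ gives $W^{1,\g}\hookrightarrow L^\infty$ and $6\g/(6+\g)>2$, pass to the limit in the routine terms by Aubin--Lions compactness of $\nabla\Delta^{-1}\tilde\vr_\e$, and reduce the remaining convective/commutator contribution to the classical div--curl (Riesz commutator) lemma of Lions and Feireisl--Novotn\'y--Petzeltov\'a. The only differences are cosmetic (you place a cut-off inside the operator, $\mathcal{A}[\psi\tilde\vr_\e]$, with an outer $\tilde\psi$, so your leading terms carry $\tilde\psi\psi=\psi$ rather than the $\psi^2$ you wrote, while the paper uses $\phi\psi\nabla\Delta^{-1}(1_\O\tilde\vr_\e)$), and the level of detail deferred to the references matches the paper's own.
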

\begin{proof}[Proof of  Lemma \ref{lem:flux}]
The proof of Lemma \ref{lem:flux} is quite tedious but nowadays well understood. In this proof, the notations $\nabla$ and $\Delta$ are all associated with spatial variable $x$.

The main idea is to employ the following test functions in the weak formulations of momentum equations:
\be\label{def-test-flux}
\phi \psi \nabla \Delta^{-1} \tilde\vr_\e, \quad  \phi \psi \nabla \Delta^{-1}\vr,
\ee
where $\Delta^{-1}$ is the Fourier multiplier on $ \R^3$ with symbol $-\frac{1}{|\xi|^2}$. In \eqref{def-test-flux},  $\tilde \vr_\e$ and $\vr$ are treated as functions in $\R^3$ with respect to spatial variable, where there holds $\tilde\vr_\e = \vr =0$ on $\O^c$.

Notice that
$$
\nabla \nabla \Delta^{-1}=\left(\mathcal{R}_{i,j}\right)_{1\leq i,j\leq 3}
$$
are the classical Riesz operators.  We refer to Section 1.3.7.2 in \cite{N-book} for the concepts of Fourier multiplier and Riesz operators. We recall the classical property for Riesz operators: for any $f\in L^q(\R^3),~1<r<\infty$:
\be\label{est-test-flux1}
\|\nabla\nabla \Delta^{-1} f\|_{L^r( \R^3; \R^9)}\leq C \|f\|_{L^r( \R^3)}.
\ee
By embedding theorems in homogeneous Sobolev spaces (see Theorem 1.55 and Theorem 1.57 in \cite{N-book} or Theorem 10.25 and Theorem 10.26 in \cite{F-N-book}), we have for any  $f\in L^r( \R^3)$ with ${\rm supp}\, f\subset \overline\O$:
\ba\label{est-test-flux2}
&\|\nabla \Delta^{-1}f\|_{L^{r^*}( \R^3; \R^3)}\leq C \| f \|_{L^r( \R^3)},\quad \frac{1}{r^*}=\frac{1}{r}-\frac{1}{3}, \ \mbox{if $1<r<3$},\\
&\|\nabla \Delta^{-1} f \|_{L^{r^*}( \R^3; \R^3)}\leq C \| f \|_{L^r( \R^3)},\quad \mbox{for any} \ r^*<\infty,  \mbox{if $r\geq3$}.
\ea
Then by the uniform estimate for $\tilde \vr_\e$ and its weak limit $\vr$ in \eqref{bd-vr} and \eqref{weak-limit}, we have
\ba\label{est-test-flux3}
\|\nabla  \nabla \Delta^{-1} \tilde\vr_\e\|_{L^\infty(0,T;L^{\g}(\O; \R^9))} + \|\nabla  \nabla \Delta^{-1}\vr \|_{L^\infty(0,T;L^{\g}(\O; \R^9))} \leq C.
\ea
Moreover, under the restriction $\g>6$, we have for any $r\in (1,\infty)$:
\ba\label{est-test-flux3-0}
\|\nabla \Delta^{-1} \tilde\vr_\e \|_{L^\infty(0,T; L^r(\O; \R^3))} + \| \nabla \Delta^{-1} \vr \|_{L^\infty(0,T; L^r(\O; \R^3))}\leq C.
\ea

By equations \eqref{eq-tvr} and \eqref{eq-vr-lim1}, there holds in $\mathcal{D}'((0,T)\times \R^3)$:
\ba\label{dt-tvr-vr}
&\d_t \left( \nabla \Delta^{-1}\tilde\vr_\e  \right) = \nabla \Delta^{-1}( \d_t \tilde\vr_\e)= -  \nabla \Delta^{-1}(\Div (\tilde\vr_\e \tilde \vu_\e)),\\
&\d_t \left( \nabla \Delta^{-1} \vr  \right) =  \nabla \Delta^{-1}( \d_t \vr ) = - \nabla \Delta^{-1}(\Div (\vr \vu)).
\ea
Then, by uniform estimates in \eqref{bd-vu} and the results in \eqref{est-test-flux1}--\eqref{est-test-flux2}, we have from \eqref{dt-tvr-vr} that
\ba\label{est-dt-vr}
&\left\|\d_t \left( \nabla \Delta^{-1}\tilde\vr_\e  \right)\right\|_{{L^2(0,T;L^{\frac{6\g}{6+\g}}(\O; \R^3))}} \leq C \left\|\tilde\vr_\e \right\|_{L^\infty(0,T;L^{\g}(\O))} \left\|\tilde\vu_\e \right\|_{L^2(0,T;W_0^{1,2}(\O;\R^3))}\leq C, \\
&\left\|\d_t \left( \nabla \Delta^{-1} \vr  \right)\right\|_{{L^2(0,T;L^{\frac{6\g}{6+\g}}(\O; \R^3))}} \leq C \left\| \vr  \right\|_{L^\infty(0,T;L^{\g}(\O))} \left\| \vu  \right\|_{L^2(0,T;W_0^{1,2}(\O;\R^3))}\leq C¡£
\ea

By \eqref{est-test-flux3}--\eqref{est-dt-vr} and Aubin-Lions-Simon Theorem (see \cite{Simon}), we have that the family  $\{\nabla \Delta^{-1}\tilde\vr_\e \}_{\e>0}$ is strongly precompact in $L^\infty(0,T; L^r(\O;\R^3))$ for any $r\in (1,\infty)$. Thus, up to a substraction of a subsequence,
\be\label{con-st-Avr}
\nabla \Delta^{-1}\tilde\vr_\e \to \nabla \Delta^{-1} \vr \quad \mbox{strongly in} \ C_w([0,T]; L^r(\O;\R^3)) \ \mbox{for any $r\in (1,\infty)$}.
\ee

For the residual term $F_\e$ in \eqref{eq-tvu}, by \eqref{Fe},  \eqref{est-test-flux3}, \eqref{est-test-flux3-0} and \eqref{est-dt-vr}, we have
\ba\label{est-test-flux4}
|\langle F_\e, \phi \psi \nabla \Delta^{-1}\tilde\vr_\e\rangle| \leq C \e^{\sigma}
\ea
which goes to zero as $\e\to 0$. Here we used the fact $6\g/(6+\g)>3>2$ under restriction $\g>6$.

\smallskip

Now we take $\phi \psi \nabla \Delta^{-1}(1_{\Omega}\tilde\vr_\e)$ as a test functions in the weak formulation of \eqref{eq-tvu} and pass $\e\to 0$. Then we take $\phi \psi \nabla \Delta^{-1}(1_{\Omega}\vr)$ as a test functions in the weak formulation of $\eqref{eq-vu-lim1}$. By comparing the results of theses two operations, through long but straightforward calculations, using the convergence results in \eqref{con-st-Avr} and \eqref{est-test-flux4}, we finally obtain
\ba\label{flux-l1}
I:&=\lim_{\e\to 0}\int_0^T\intO{\phi \psi \left(p(\tilde\vr_\e)-(\frac{4\mu}{3}+\eta)\Div \tilde\vu_\e\right)\tilde\vr_\e}\,\dt \\
&\qquad - \int_0^T \intO{\phi \psi\left(\overline{p(\vr)}-(\frac{4\mu}{3}+\eta)\Div \vu\right)\vr}\,\dt\\
&=\lim_{\e\to 0}\int_0^T \phi \intO{ \psi \tilde\vr_\e\tilde\vu_\e^i\tilde\vu_\e^j  \mathcal{R}_{i,j}( \tilde\vr_\e)}\,\dt-\int_0^T \phi \intO{ \psi  \vr \vu^i\vu^j \mathcal{R}_{i,j}( \vr)}\,\dt.
\ea
The last quantity in \eqref{flux-l1} is indeed zero. This follows by a div-curl type, see \cite[Section 3.4]{FNP}.

\end{proof}

\medskip





Now we are ready to show the strong convergence of $\{\tilde \vr_\e\}_{\e>0}$. First of all, we have
$ \big(\frac{5\g}{3}-1 \big)- (\g+1) = \frac{2\g}{3} -2 >0.$ Then by \eqref{bd-vr2} and \eqref{bd-vr-p}, we have
$$
p(\tilde \vr_\e)\tilde \vr_\e \to \overline {p(\vr)\vr} \quad \mbox{weakly in}\quad L^{\left(\frac{5\g}{3}-1\right)/(\g+1)}((0,T)\times \O)).
$$

Taking $b(s)=s\log s$ in the renormalized equations \eqref{eq-vr-b} and \eqref{reno-vr-lim2} implies
\ba\label{eq:slogs}
&\d_t(\tilde\vr_\e\log \tilde\vr_\e) + \Div \big((\tilde\vr_\e\log \tilde\vr_\e) \tilde\vu_\e\big)+\tilde\vr_\e \Div  \tilde\vu_\e=0,\ \mbox{in}\ \mathcal{D}'((0,T)\times \R^3)).\\
& \d_t(\vr\log \vr) +  \Div \big((\vr\log \vr) \vu\big)+\vr \Div  \vu=0,\ \mbox{in}\ \mathcal{D}'((0,T)\times \R^3)).
\ea
Passing $\e\to 0$ in the first equation of \eqref{eq:slogs} gives
\be\label{eq:slogs1}
 \d_t(\overline{\vr\log \vr})  + \Div \big(\overline{(\vr\log \vr)\vu}\big)+\overline{\vr \Div  \vu}=0, \ \mbox{in}\ \mathcal{D}'((0,T)\times \R^3)).
\ee
Then, by using a test function sequence $\{\phi_n(t)\}_n \subset C_c^\infty(0,T)$ which convergence to $1$ strongly in $L^2(0,T)$ in the weak formulation of $\eqref{eq:slogs}_2$ and $\eqref{eq:slogs1}$, we can obtain for a.e. $\tau \in (0,T]$:
\be\label{int-logvr}
\int_\O \left( \overline{\vr\log \vr} -  \vr\log \vr \right)(\tau,\cdot)\,\dx + \int_0^\tau \int_\O\left( \overline{\vr \Div  \vu} - {\vr \Div  \vu}  \right)\,\dx\,\dt =0.
\ee

For any $\psi(x)\in C_c^\infty(\O)$ and any $\phi(t)\in C_c^\infty(0,\tau)$ with $\tau \in (0,T]$, there holds
\ba\label{flux1}
\lim_{\e\to 0}\int_0^\tau \intO{\phi \psi \left(p(\tilde\vr_\e)-(\frac{4\mu}{3}+\eta)\Div  \tilde\vu_\e\right)\tilde\vr_\e}\,\dt\\
= \int_0^\tau \intO{\phi \psi \left( \overline{p(\vr)\vr}-(\frac{4\mu}{3}+\eta) \overline{ \vr \Div  \vu}\right)} \,\dt.\nn
\ea
This gives, by using Lemma \ref{lem:flux},
\ba\label{flux2}
\int_0^\tau \int_\O \phi \psi \left( \overline{\vr \Div  \vu} - {\vr \Div  \vu}  \right)\,\dx\,\dt = (\frac{4\mu}{3}+\eta)^{-1} \int_0^\tau \int_\O \phi \psi \left( \overline{p(\vr)\vr } - \overline{p(\vr)} \vr  \right)\,\dx\,\dt.\nn
\ea
This implies, by choosing test function sequences that converges to $1$ strongly in some proper spaces,
\ba\label{flux3}
\int_0^\tau \int_\O  \left( \overline{\vr \Div  \vu} - {\vr \Div  \vu}  \right)\,\dx\,\dt = (\frac{4\mu}{3}+\eta)^{-1} \int_0^\tau \int_\O \left( \overline{p(\vr)\vr } - \overline{p(\vr)} \vr  \right)\,\dx\,\dt.\nn
\ea
Together with \eqref{int-logvr}, we obtain
\ba\label{flux4}
\int_\O \left( \overline{\vr\log \vr} -  \vr\log \vr \right)  (\tau,\cdot) \,\dx + (\frac{4\mu}{3}+\eta)^{-1} \int_0^\tau \int_\O \left( \overline{p(\vr)\vr } - \overline{p(\vr)} \vr  \right)\,\dx\,\dt = 0.
\ea

Since the function $s\to s\log s$ is convex in $[0,\infty)$, there holds,
\be\label{positivity1}
\overline{\vr\log \vr} \geq \vr\log \vr, \quad \mbox{a.e. in $(0,T)\times \O$}.\nn
\ee
Since $p(s)$ is strictly increasing in $[0,\infty)$, there holds (see Theorem 10.19 in \cite{F-N-book}):
\be\label{positivity2}
\overline{p(\vr)\vr} \geq \overline{p(\vr)}\vr,\quad \mbox{a.e. in $(0,T)\times \O$}.\nn
\ee
Thus, by \eqref{flux4}, necessarily, there holds
\be\label{con-st-vr0}
\overline{\vr\log \vr} = \vr\log \vr, \quad \overline{p(\vr)\vr} = \overline{p(\vr)}\vr,\quad \mbox{a.e. in $(0,T)\times \O$},\nn
\ee
which implies the strong convergence
\be\label{con-st-vr}
\tilde \vr_\e \to \vr \quad \mbox{a.e. in $(0,T)\times \O$}.\nn
\ee
This implies the convergence of the pressure term as in \eqref{weak-limit-pressure}.

\subsection{End of the proof}

By the convergence results we obtained in Section \ref{sec:limitpass},  we have shown that the limit $(\vr,\vu)$ is a weak solution to the compressible Navier--Stokes equations in the sense of Definition \ref{def-weaksol} in homogeneous domain $\O$, if the energy inequality is satisfied. Indeed, by the strong convergence of the density shown in \eqref{con-st-vr}, together with \eqref{wl-pd1} and \eqref{wl-pd3}, the energy inequality can be obtained directly by passing $\e\to 0$ in the energy equality for $(\vr_\e, \vu_\e)$. This completes the proof of Theorem \ref{Tm1}.

\subsection*{Acknowledgments}
The authors wish to thank E. Feireisl for inspiring discussions related to the work. S. Schwarzacher gratefully acknowledges the support of the project LL1202 financed by the Ministry of Education, Youth and Sports.


\end{document}